\documentclass[a4paper,11pt]{amsart}
\usepackage{amssymb}
\usepackage{latexsym}
\usepackage{amsmath}
\usepackage{enumerate}
\usepackage{amsmath, hyperref}

\usepackage{tikz}
\usepackage{geometry}
\usepackage[all,pdf]{xy}

\newtheorem{theorem}{Theorem}[section]
\newtheorem{lemma}[theorem]{Lemma}
\newtheorem{corollary}[theorem]{Corollary}
\newtheorem{proposition}[theorem]{Proposition}

\theoremstyle{definition}
\newtheorem{definition}[theorem]{Definition}
\newtheorem{question}[theorem]{Question}
\newtheorem{fact}[theorem]{Fact}
\newtheorem{example}[theorem]{Example}

\newtheorem*{remark}{Remark}
\newtheorem*{claim}{Claim}

\title{Effective inseparability and some applications in meta-mathematics}
\author{Yong Cheng}

\address{School of Philosophy, Wuhan University, China. ORCID ID: 0000-0003-2408-3886}

\email{world-cyr@hotmail.com}
%    \thanks will become a 1st page footnote.

\thanks{This is a contributed paper for the conference ``Celebrating 90 Years of G\"{o}del's Incompleteness Theorems" organized by Carl Friedrich von Weizs\"{a}cker Center and Kurt G\"{o}del Society. I would like to thank the organizers for making this great conference. I also thank the two referees for helpful comments and suggestions for improvement.}

\subjclass[2010]{03F40, 03F30, 03D35}

\keywords{Effectively inseparable, Recursively inseparable, Meta-mathematics, Strong double recursion theorem}

\begin{document}
\begin{abstract}
Effectively inseparable  pairs and their properties play an important role in the meta-mathematics of arithmetic and incompleteness. Different notions are introduced and shown in the literature to be equivalent to effective inseparability. We give a much simpler proof of these equivalences
using the strong double recursion theorem.
Then we prove some results about the application of effective inseparability in meta-mathematics.
\end{abstract}

\maketitle

\section{Introduction}

Since G\"{o}del, research on incompleteness has greatly deepened our understanding of the incompleteness phenomenon. In particular, Smullyan's work in \cite{GIT, RM}  provides a unique
way to understand incompleteness in an abstract way via metamathematical
research of formal systems. This paper is inspired by Smullyan's work.

Recursion-theoretic proofs of metamathematical results tend to rely on an effectively inseparable  pair of recursively enumerable (RE) \footnote{Some authors use ``computable" instead of ``recursive" in the literature.} sets and its properties.
Effectively inseparable sets arise naturally in the meta-mathematics of arithmetic. For example, the pair of G\"{o}del numbers of provable and refutable sentences of $\mathbf{PA}$ is effectively inseparable.
The motivation of this paper is to study uniform versions of
incompleteness/undecidability via the notion of effective inseparability. Especially, we study effectively inseparable theories that exhibit similar behaviors connected to
incompleteness/undecidability.

In this paper, a theory is an RE theory of classical first-order logic
in finite signature, and we identify a theory  with the set of sentences provable in it. We always assume the \emph{arithmetization} of the base theory and we will usually work with a bijective G\"{o}del numbering of the sentences. Under arithmetization, we equate a set  of sentences  with the set of G\"{o}del's numbers of sentences. For any formula $\phi$, we use $\ulcorner\phi\urcorner$ to denote the G\"{o}del number of $\phi$. We first introduce the standard notions of recursively inseparable and effectively inseparable  pairs of RE sets.

\begin{definition}[\cite{Rogers}]\label{}
Let $(A, B)$ be a disjoint pair  of RE sets.
\begin{enumerate}[(1)]
  \item We say $(A, B)$ is  \emph{recursively inseparable} ($\sf RI$) if there exists no recursive superset of $A$ which is disjoint from $B$.
  \item We say $(A, B)$  is \emph{effectively inseparable} $(\sf EI)$ if there is a recursive function $f(x,y)$ such that for any $i$ and $j$, if $A\subseteq W_i$ and $B\subseteq W_j$ with $W_i\cap W_j=\emptyset$, then $f(i,j)\notin W_i\cup W_j$.
\end{enumerate}
\end{definition}

Effective inseparability can be viewed as an effective version of recursive inseparability. For a  disjoint pair $(A, B)$ of RE sets, if $(A, B)$ is $\sf RI$, then for any $i$ and $j$, if  $A\subseteq W_i, B\subseteq W_j$ and $W_i\cap W_j=\emptyset$, then $W_i\cup W_j\neq\mathbb{N}$; if $(A, B)$ is $\sf EI$, then we can effectively pick an element not in $W_i\cup W_j$.

In Definition \ref{The nuclei of a theory},  we  introduce the notions of recursively inseparable theories and effectively inseparable theories which are based on the nuclei of a theory.

\begin{definition}[The nuclei of a theory, Smullyan]\label{The nuclei of a theory}
Let $T$ be a consistent RE theory.
\begin{enumerate}[(1)]
  \item The pair $(T_P, T_R)$ are called the \emph{nuclei} of the theory $T$, where  $T_P$ is the set of G\"{o}del numbers of sentences provable in $T$, and $T_R$ is the set of G\"{o}del numbers of sentences refutable in $T$. In other words, $T_P=\{\ulcorner\phi\urcorner: T\vdash\phi\}$, and $T_R=\{\ulcorner\phi\urcorner: T\vdash\neg\phi\}$.
  \item We say $T$ is $\sf RI$  if $(T_P, T_R)$ is recursively inseparable.
      \item We say $T$ is $\sf EI$ if $(T_P, T_R)$ is effectively inseparable.
\end{enumerate}
\end{definition}

The nuclei of a theory play an important role in  meta-mathematical
research on incompleteness and undecidability.
The notion of an $\sf EI$ theory is stronger than that of a $\sf RI$
theory. A $\sf RI$ theory may not be an $\sf EI$ theory (see Section 3), and
$\sf EI$ theories have nice properties. The notion of an $\sf EI$ theory is
central in research on the incompleteness phenomenon (see \cite{Smullyan, Smullyan63,
RM}). For example, if $T$ is a consistent $\sf EI$ theory, then there is a recursive function $f$ such that if $T_P=W_i$ and $T_R=W_j$, then $f(i,j)$ converges and outputs the code of a sentence $\phi$ which is independent of $T$ (i.e., $T\nvdash\phi$ and $T\nvdash\neg\phi$). We could view effective inseparability as the effective version of essential incompleteness.

Smullyan introduces different notions (see Definition \ref{def of EI}) and essentially shows in \cite{RM} that all these notions are equivalent to the notion of effective inseparability. These equivalences reveal the central role of effective inseparability in the meta-mathematics of arithmetic. We will show in Section 2 that these equivalences can be proved in a much simpler
and more efficient way using the strong double recursion theorem.

The structure of this paper is as follows. In Section 1, we introduce the notion of
effectively inseparable theories and the motivation of this paper. In Section 2, we study alternative characterizations of effectively inseparable pairs and establish these  equivalences in a much simpler
and more efficient way than the proofs in \cite{RM} using the strong double recursion theorem. In Section 3, we discuss some applications of the notion of effective inseparability in the meta-mathematics of arithmetic.

One main result in Section 2 is that we give a simpler and more efficient proof of the equivalences of effective inseparability using the strong double recursion theorem. Even if these equivalences have been proved by Smullyan, the proof is very complex for us and only one direction uses the double recursion theory. We directly prove some directions which are indirect in Smullyan's proof using a new method (strong double recursion theory). Our proof diagram in Theorem \ref{big thm} is much simper than Smullyan's proof diagram in Theorem \ref{equiv of EI}.
Section 3 is partly (Section 3.1) an exposition  of some of the literature but it also  contains original results in Section 3.2-3.4.

\section{Alternative characterizations of effectively inseparable pairs}

Smullyan proved in \cite{RM} that many different notions (see Definition \ref{def of EI}) of pair of RE sets are equivalent to the notion of effectively inseparable pair of RE sets. Smullyan's results are summarized in Theorem \ref{equiv of EI} which is not specifically stated in \cite{RM} as a theorem.
In this section, we give a much simper and more efficient proof of Theorem \ref{equiv of EI} to establish the equivalence of notions in Definition \ref{def of EI}. The main tool we use is the strong double recursion theorem for which we refer to \cite{RM}.

\subsection{Basic definitions and facts}

We first introduce some definitions. Our notations are standard.
\begin{definition}[Basic notations]~\label{}
\begin{enumerate}[(1)]
  \item For any set $A$ and function $f(x)$, define $f^{-1}[A]=\{x: f(x)\in A\}$.
  \item We denote the \emph{recursively enumerable} set with index $i$  by $W_i$. I.e., $W_i=\{x: \exists y \, T_1(i,x, y)\}$, where $T_1(z,x, y)$ is the Kleene predicate (cf.\cite{Kleene}). We denote the recursive function with index $e$  by $\phi_e$.
\end{enumerate}
\end{definition}

We list some examples of  $\sf EI$ pairs of RE sets.
\begin{example}~\label{}
\begin{itemize}
  \item $(A,B)$ is $\sf EI$, where $n\in A\Leftrightarrow \exists p [T_1((n)_0, n,p)\wedge \forall q\leq p\neg T_1((n)_1, n,q)]$ and $n\in B\Leftrightarrow \exists q [T_1((n)_1, n,q)\wedge \forall p\leq q\neg T_1((n)_0, n,p)]$\footnote{If $n=\langle a,b\rangle$, then $(n)_0=a$ and $(n)_1=b$.} (see \cite{Kleene}).
      \item $(A_i, A_j)$ is $\sf EI$ for $i\neq j$, where $A_i=\{e: \phi_e(e)=i\}$ (see \cite{Rogers}).
  \item If $(A, B)$ is a disjoint pair of non-empty RE sets, then $(X,Y)$ is $\sf EI$ where $X=\{e: \phi_e(e)\in A\}$ and $Y=\{e: \phi_e(e)\in B\}$.
\end{itemize}
\end{example}

\begin{remark}
We give some examples of $\sf RI$ pairs of RE sets which are not $\sf EI$. Ershov \cite{Ershov} shows that there is a disjoint pair $(A,B)$ of RE sets such that both $A$ and $B$ are creative,  $(A,B)$ is $\sf RI$ but is not $\sf EI$.
As a corollary of a theorem by Friedberg and Yates,  there exist recursive functions $\sigma_1$ and $\sigma_2$ such that if $W_e$ is non-recursive, then $(W_{\sigma_1(e)}, W_{\sigma_2(e)})$\footnote{In fact, $W_{\sigma_1(e)}$ and  $W_{\sigma_2(e)}$ are Turing incomparable (see \cite{Cleave}).} is $\sf RI$ but is not $\sf EI$ (see \cite{Cleave}).
\end{remark}

Notions in Definition \ref{def of EI} are introduced in \cite{RM}.
Smullyan proved in \cite{RM} that these notions  are equivalent to the notion of effectively inseparable pair of RE sets. We will give a much simper and more efficient proof of these equivalences (see Theorem \ref{big thm}).

\begin{definition}[\cite{RM}]\label{def of EI}
Let $(A,B)$ be a disjoint pair of RE sets.
\begin{enumerate}[(1)]
  \item
We say $(A,B)$ has a \emph{separation function} (denoted by {\sf SF}) if there is a recursive function $S(x,y,z)$ such that for any RE relations $M_1(x,y)$ and $M_2(x,y)$, there is $h\in\omega$ such that for any $x, y\in\omega$, we have:
\begin{enumerate}[(i)]
  \item $M_1(x,y)\wedge\neg M_2(x,y)\Rightarrow S(h,x,y)\in A$;
  \item $M_2(x,y)\wedge\neg M_1(x,y)\Rightarrow S(h,x,y)\in B$.
\end{enumerate}
  \item
We call a recursive function $f(x,y)$ a \emph{Kleene function}  for $(A,B)$ if for any $x$ and $y$, we have:
\begin{enumerate}[(i)]
  \item if $f(x,y)\in W_y- W_x$, then $f(x,y)\in A$;
  \item if $f(x,y)\in W_x- W_y$, then $f(x,y)\in B$.
\end{enumerate}
We say $(A,B)$ is a \emph{Kleene pair} (denoted by {\sf KP}) if it has a Kleene function.
  \item
We say $(A,B)$ is \emph{weakly doubly co-productive} $({\sf WDCP})$ if there is a recursive function $f(x,y)$ such that for any $i,j\in\omega$, we have:

\begin{enumerate}[(i)]
  \item if $W_i=W_j=\emptyset$, then $f(i,j)\notin A\cup B$;
  \item if $W_i=\emptyset$ and $W_j=\{f(i,j)\}$, then $f(i,j)\in B$;
  \item   $W_i=\{f(i,j)\}$ and $W_j=\emptyset$, then $f(i,j)\in A$.
\end{enumerate}
\item
We say $(A,B)$ is \emph{doubly co-productive} $({\sf DCP})$ if there is a recursive function $f(x,y)$ such that for any $i,j\in\omega$, if $W_i\cap W_j=\emptyset$ and $W_i\cap A=\emptyset$ and $W_j\cap B=\emptyset$, then $f(i,j)\notin A\cup B\cup W_i\cup W_j$.
\item We say $(A,B)$ is \emph{semi-${\sf DG}$} if there is a recursive function $f(x,y)$ such that for any $i,j\in\omega$, if $W_i\cap W_j=\emptyset$, then we have:
\begin{enumerate}[(i)]
  \item if $f(i,j)\in W_i$, then $f(i,j)\in A$;
  \item if $f(i,j)\in W_j$, then $f(i,j)\in B$.
\end{enumerate}
  \item
We say $(A,B)$ is \emph{doubly generative} $({\sf DG})$ if there is a recursive function $f(x,y)$ such that for any $i,j\in\omega$, if $W_i\cap W_j=\emptyset$, then we have:
\begin{enumerate}[(i)]
  \item $f(i,j)\in A$ iff $f(i,j)\in W_i$;
  \item $f(i,j)\in B$ iff $f(i,j)\in W_j$.
\end{enumerate}
\item We say $(A,B)$
is \emph{semi-reducible} to $(C,D)$ if there is a recursive function $f(x)$ such that if $x\in A$, then $f(x)\in C$, and if $x\in B$, then $f(x)\in D$.
We say $(C,D)$ is \emph{semi-doubly universal} (semi-${\sf DU})$ if  any disjoint pair $(A,B)$ of RE sets is semi-reducible to $(C,D)$.

\item
We say $(A,B)$ is \emph{reducible} to $(C,D)$ if there is a recursive function $f(x)$ such that $x\in A$ iff $f(x)\in C$ and $x\in B$ iff $f(x)\in D$.
We say $(C,D)$ is \emph{doubly universal} $({\sf DU})$ if  any disjoint pair $(A,B)$ of RE sets is reducible to $(C,D)$.
\item
We say $(A,B)$ is \emph{weakly effective inseparable} $({\sf WEI})$ if there is a recursive function $f(x,y)$ such that for any $i$ and $j$, we have:
\begin{enumerate}[(i)]
  \item if $W_i=A$ and $W_j=B$, then $f(i,j)\notin A\cup B$;
  \item if $W_i=A$ and $W_j=B\cup\{f(i,j)\}$, then $f(i,j)\in A$;
  \item if $W_i=A\cup\{f(i,j)\}$ and $W_j=B$, then $f(i,j)\in B$.
\end{enumerate}
\item We say $(A,B)$ is \emph{completely effective inseparable} $({\sf CEI})$ if there is a recursive function $f(x,y)$ such that for any $i$ and $j$, if $A\subseteq W_i$ and $B\subseteq W_j$, then $f(i,j)\in W_i\Leftrightarrow f(i,j)\in W_j$.
\end{enumerate}
\end{definition}

\begin{fact}[\cite{Myhill, Smullyan61}]\label{fact about DU}
The following are equivalent:
\begin{itemize}
  \item A disjoint pair $(A,B)$ of RE sets is ${\sf DU}$;
  \item Every disjoint pair of RE sets is reducible to $(A,B)$ under a 1-1 function $g(x)$.
\end{itemize}
\end{fact}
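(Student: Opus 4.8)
The backward direction is immediate: a family of $1$-$1$ reductions is in particular a family of reductions, so $1$-$1$-universality trivially entails ${\sf DU}$. All the content is in the forward direction, and the plan is to reduce it to a single structural property of the target pair together with a padding argument.

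Write $c(X)=\{\langle x,n\rangle : x\in X\}$ for the cylindrification of $X$. The key lemma I would isolate is a padding lemma for pairs: if $(A',B')$ is $m$-reducible to $(A,B)$ and $(A,B)$ is a \emph{double cylinder}, meaning $(A,B)\equiv_1(c(A),c(B))$ under $1$-$1$ reductions in both directions, then $(A',B')$ is $1$-reducible to $(A,B)$. To prove it, compose the given $m$-reduction $f$ with the $1$-$1$ reduction $(A,B)\le_1(c(A),c(B))$ to obtain an $m$-reduction $r$ of $(A',B')$ into $(c(A),c(B))$, and then set $g(x)=\langle (r(x))_0,\,x\rangle$. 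Since membership in $c(A)$ and $c(B)$ depends only on the first coordinate, $g$ is still a reduction; since the second coordinate of $g(x)$ is $x$, the map $g$ is automatically $1$-$1$. Composing $g$ with the $1$-$1$ reduction $(c(A),c(B))\le_1(A,B)$ yields the desired $1$-$1$ reduction of $(A',B')$ to $(A,B)$. Note that the cylinder coordinate supplies injectivity \emph{for free}, even on inputs lying outside $A'\cup B'$, so no search for fresh points is needed at this stage.

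Granting this lemma, the forward direction follows once I show that a ${\sf DU}$ pair is a double cylinder, because ${\sf DU}$ already provides the required $m$-reductions of every disjoint RE pair. One direction of the cylinder equivalence, $(A,B)\le_1(c(A),c(B))$, is witnessed by the injection $x\mapsto\langle x,0\rangle$. The real work is the reverse reduction $(c(A),c(B))\le_1(A,B)$. Here I would first record that ${\sf DU}$ implies ${\sf RI}$: pulling a hypothetical recursive separator of $(A,B)$ back along a reduction of any fixed ${\sf RI}$ pair would separate that pair, a contradiction. From ${\sf RI}$ it follows that $A$, $B$, and $\overline{A\cup B}$ are all infinite (a finite, hence recursive, piece would immediately yield a recursive separator). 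One then builds the $1$-$1$ reduction $p$ of $(c(A),c(B))$ into $(A,B)$ in stages: a fixed $m$-reduction carries the correct status of each image, collisions with previously committed values are broken by substituting a fresh point of the same status, and the recursion theorem lets the construction refer to the finite set of values already used.

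The main obstacle is exactly this last construction, i.e. establishing that a ${\sf DU}$ pair is a cylinder --- the pair-theoretic analogue of Myhill's theorem that creative sets are cylinders. The difficulty has two intertwined sources: a bare ${\sf DU}$ $m$-reduction may collapse an entire column to a single point, so injectivity cannot be read off from it; and membership status is only semidecidable, so for inputs destined to land \emph{outside} $A\cup B$ one cannot simply enumerate a fresh point. Overcoming this requires extracting from ${\sf DU}$ a productive function that effectively produces a point outside $A\cup B$ avoiding any given finite forbidden set --- the substantive step --- after which inside-$A$ and inside-$B$ fresh points come for free by enumeration and the staged construction goes through.
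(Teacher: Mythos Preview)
The paper does not prove this statement; it is recorded as a cited Fact from Myhill and Smullyan and invoked later (in parts (8) and (9) of the reduction theorem) without argument. So there is no in-paper proof to compare your proposal against.

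Your outline --- padding lemma plus double-cylinder --- is the classical shape, and your padding lemma is correct. You are also right that all the weight falls on showing $(c(A),c(B))\le_1(A,B)$, and you correctly isolate the obstruction: on a collision one must produce a fresh point whose $A$/$B$/neither status matches that of a value whose status is undecidable. But your proposed resolution does not close this gap. Possessing three separate tools (enumerate a fresh $A$-point, enumerate a fresh $B$-point, invoke a productive function for $\overline{A\cup B}$) still leaves you needing to choose \emph{which} tool to invoke at a given collision, and that choice is precisely the undecidable one; saying ``the staged construction goes through'' once the productive function is in hand skips over this. The standard fix is not to split cases at all but to route through the doubly-generative property (which the paper derives from ${\sf DU}$ along a chain that nowhere uses the present fact): via the recursion theorem one manufactures, for each collision, indices $i,j$ whose RE sets encode the unknown status of the colliding value, and the ${\sf DG}$ witness then returns a single fresh point whose $A$- and $B$-membership is \emph{forced} to agree with that status, with no case analysis ever performed.
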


\subsection{The reduction theorem}

In this section, we will show in Theorem \ref{reduction thm} that if $(A,B)$ has any property $P$ in Definition \ref{def of EI} and $(A,B)$ is reducible to $(C,D)$, then $(C,D)$ also has the property $P$. We call this fact the reduction theorem.  However, it is not  true that if  $(C,D)$ has any property $P$ in Definition \ref{def of EI} and $(A,B)$ is reducible to $(C,D)$, then $(A,B)$ has the property $P$.

\begin{proposition}[Smullyan]~\label{reduction prop}
\begin{enumerate}[(1)]
  \item If $(A,B)$ is a Kleene pair and $(A,B)$ is semi-reducible to $(C,D)$, then $(C,D)$ is a Kleene pair \textup{(}Lemma $A_1$, p.70, \cite{RM}\textup{)}.
  \item If $(A,B)$ is $\sf EI$ and $(A,B)$ is semi-reducible to $(C,D)$, then $(C,D)$ is $\sf EI$ \textup{(}Proposition 1, p.220, \cite{Smullyan}\textup{)}.
\end{enumerate}
\end{proposition}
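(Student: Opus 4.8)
The plan for both parts is the same pullback argument, exploiting that semi-reducibility lets us transport data back and forth along the reducing function while the $s$-$m$-$n$ theorem keeps all indices uniform. Write $r$ for the recursive function witnessing that $(A,B)$ is semi-reducible to $(C,D)$, so that $x\in A\Rightarrow r(x)\in C$ and $x\in B\Rightarrow r(x)\in D$. The single technical ingredient I would isolate first is a total recursive function $a$ with $W_{a(x)}=r^{-1}[W_x]=\{z:r(z)\in W_x\}$: since $W_x$ is RE uniformly in $x$ and $r$ is recursive, the set $\{z:r(z)\in W_x\}$ is RE uniformly in $x$, so such an $a$ exists by $s$-$m$-$n$. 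The membership translation I will use repeatedly is then immediate: $z\in W_{a(x)}\Leftrightarrow r(z)\in W_x$.

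For part (1), let $g(x,y)$ be a Kleene function for $(A,B)$. I would define the candidate Kleene function for $(C,D)$ by
\[
G(x,y)=r\bigl(g(a(x),a(y))\bigr).
\]
To verify it, set $w=g(a(x),a(y))$ and suppose $G(x,y)=r(w)\in W_y-W_x$. By the translation this says exactly $w\in W_{a(y)}-W_{a(x)}$, so the Kleene property of $g$ gives $w\in A$, whence $r(w)\in C$ by semi-reducibility, i.e.\ $G(x,y)\in C$. The symmetric computation handles $G(x,y)\in W_x-W_y\Rightarrow w\in B\Rightarrow G(x,y)\in D$. Since these hold for arbitrary $x,y$ (the Kleene conditions are purely conditional, so no hypotheses on $W_x,W_y$ are needed), $G$ is a Kleene function for $(C,D)$.

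For part (2), let $h(i,j)$ witness that $(A,B)$ is $\sf EI$. The first step is to check that the $\sf EI$ hypotheses transfer under the preimage: if $C\subseteq W_i$, $D\subseteq W_j$ and $W_i\cap W_j=\emptyset$, then $A\subseteq W_{a(i)}$ (because $x\in A\Rightarrow r(x)\in C\subseteq W_i\Rightarrow x\in W_{a(i)}$), symmetrically $B\subseteq W_{a(j)}$, and $W_{a(i)}\cap W_{a(j)}=\emptyset$ (no $z$ can have $r(z)$ in the disjoint sets $W_i$ and $W_j$ simultaneously). I would then define
\[
H(i,j)=r\bigl(h(a(i),a(j))\bigr),
\]
put $w=h(a(i),a(j))$, and invoke the $\sf EI$ property of $h$ to get $w\notin W_{a(i)}\cup W_{a(j)}$; translating, $r(w)=H(i,j)\notin W_i\cup W_j$, as required.

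There is no serious obstacle here; the construction is short once the pullback index $a$ is in place. The only points needing care are bookkeeping ones: confirming the uniformity of $a$ via $s$-$m$-$n$, and checking that disjointness and the two inclusions are preserved by the preimage in part (2) (this is exactly where disjointness of $W_i,W_j$ is used). It is worth flagging that only semi-reducibility---the one-directional implications $x\in A\Rightarrow r(x)\in C$ and $x\in B\Rightarrow r(x)\in D$---is consumed, which is what makes the hypothesis weaker than full reducibility and is consistent with the remark that the converse reduction fails.
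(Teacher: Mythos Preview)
Your argument is correct and is exactly the standard pullback-via-$s$-$m$-$n$ construction: the paper does not reprove this proposition (it just cites Smullyan), but the very same device---a recursive $a$ with $W_{a(x)}=r^{-1}[W_x]$ and the composite $r\circ f\circ(a\times a)$---is precisely what the paper deploys in the proof of the Reduction Theorem (Theorem~\ref{reduction thm}) immediately afterward. Nothing needs to be added.
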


The main tool we use in Theorem \ref{reduction thm} is the s-m-n theorem.

\begin{theorem}[The s-m-n theorem, \cite{Rogers}]\label{}
For any $m, n\geq 1$, there exists a recursive function $s_n^m$ of $m+1$ variables such that for all $x, y_1, \cdots, y_m, z_1, \cdots, z_n$, we have
\[\phi^n_{s_n^m(x, y_1, \cdots, y_m)}(z_1, \cdots, z_n)=\phi_{x}^{m+n}(y_1, \cdots, y_m, z_1, \cdots, z_n).\]
\end{theorem}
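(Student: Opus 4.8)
The plan is to prove this directly from the chosen model of computation by the familiar ``hard-wiring the parameters'' construction, reducing first to the case $m=1$ and then iterating. First I would observe that it suffices, for each $n\geq 1$, to produce a recursive function $s_n^1(x,y)$ with
\[\phi^n_{s_n^1(x,y)}(z_1,\ldots,z_n)=\phi_x^{1+n}(y,z_1,\ldots,z_n).\]
Granting this, the general $s_n^m$ is obtained by fixing the parameters $y_1,\ldots,y_m$ one at a time,
\[s_n^m(x,y_1,\ldots,y_m)=s_n^1\bigl(s_{n+1}^1(\cdots s_{n+m-1}^1(x,y_1)\cdots,y_{m-1}),y_m\bigr),\]
which is recursive as a composition of recursive functions; a routine induction on $m$ then verifies the defining equation.

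For the base case $m=1$, fix a standard G\"{o}del numbering of programs underlying the enumeration $(\phi_e)_e$. Given $x$ and $y$, I would build a new program which, on input $(z_1,\ldots,z_n)$, first writes the constant $y$ onto the front of its input tape (or registers) and then simulates program $x$ on the resulting $(1+n)$-tuple $(y,z_1,\ldots,z_n)$. By construction this program computes exactly $\vec{z}\mapsto\phi_x^{1+n}(y,\vec{z})$, so any index for it satisfies the required equation. The point is that the G\"{o}del number of this wrapper program is obtained from $x$ and $y$ by a purely syntactic manipulation: the ``load $y$'' prefix is a block of instructions whose code depends primitively recursively on $y$, and appending the (re-indexed) instructions of program $x$ is a primitive recursive operation on $x$. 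Hence the index of the wrapper is a primitive recursive---in particular recursive---function of $(x,y)$, which we take to be $s_n^1(x,y)$.

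The main obstacle is entirely at this last step: making precise that the transformation $(x,y)\mapsto$ code of the wrapper program is primitive recursive. This is where the argument leans on the concrete details of the numbering, and it is the part that cannot be bypassed by merely invoking the existence of a universal function---the existence of an index for each $\vec{z}\mapsto\phi_x^{1+n}(y,\vec{z})$ is immediate, but the uniformity (recursiveness) in $(x,y)$ is exactly the content of the theorem. In a model built directly from the Kleene predicate $T_1$, this amounts to exhibiting explicit primitive recursive operations on codes that implement ``prefix a constant'' and ``concatenate programs,'' a tedious but standard verification which I would carry out just far enough to confirm that both operations, and hence their composite, are primitive recursive.
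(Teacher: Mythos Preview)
Your proof sketch is correct and is essentially the standard textbook argument for the s-m-n theorem: reduce to $m=1$ by iteration, and for $m=1$ build a wrapper program that hard-wires the parameter $y$ before simulating program $x$, observing that the code of the wrapper is a primitive recursive function of $(x,y)$.

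However, there is nothing to compare against here: the paper does not prove this statement. The s-m-n theorem is quoted from Rogers~\cite{Rogers} purely as a tool (it is used, for instance, in the proof of Theorem~\ref{reduction thm} and elsewhere), and no proof is given or even sketched in the paper. So your proposal is not ``the same as'' or ``different from'' the paper's proof; the paper simply cites the result and moves on.
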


Now, we prove the reduction theorem.

\begin{theorem}[The reduction theorem]\label{reduction thm}
Let $(A,B)$ and $(C,D)$ be disjoint pairs of RE sets.
\begin{enumerate}[(1)]
  \item  If $(A,B)$ is semi-$\sf DU$ and $(A,B)$ is semi-reducible to $(C,D)$, then $(C,D)$ is semi-$\sf DU$.
  \item If $(A,B)$ is $\sf DU$ and $(A,B)$ is reducible to $(C,D)$, then $(C,D)$ is $\sf DU$.
      \item If $(A,B)$ has a separation function and $(A,B)$ is reducible to $(C,D)$, then $(C,D)$ has a separation function.
  \item If $(A,B)$ is $\sf CEI$  and $(A,B)$ is reducible to $(C,D)$, then $(C,D)$ is $\sf CEI$.
  \item If $(A,B)$ is $\sf DG$  and $(A,B)$ is reducible to $(C,D)$, then $(C,D)$ is $\sf DG$.
  \item If $(A,B)$ is semi-$\sf DG$  and $(A,B)$ is reducible to $(C,D)$, then $(C,D)$ is semi-$\sf DG$.
  \item If $(A,B)$ is $\sf DCP$  and $(A,B)$ is reducible to $(C,D)$, then $(C,D)$ is $\sf DCP$.
      \item
If $(A,B)$ is $\sf WEI$ and $(A,B)$ is reducible to $(C,D)$, then $(C,D)$ is $\sf WEI$.
      \item  If $(A,B)$ is $\sf WDCP$ and $(A,B)$ is reducible to $(C,D)$, then $(C,D)$ is $\sf WDCP$.
\end{enumerate}
\end{theorem}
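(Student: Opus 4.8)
The plan is to handle all nine parts by a single translation mechanism coming from the s-m-n theorem, and then to isolate the two parts whose \emph{exactness} conditions force an extra injectivity argument. Fix a recursive reduction $g$ witnessing that $(A,B)$ is reducible to $(C,D)$, so that $x\in A\Leftrightarrow g(x)\in C$ and $x\in B\Leftrightarrow g(x)\in D$; equivalently $g^{-1}[C]=A$ and $g^{-1}[D]=B$. The basic observation is that $g^{-1}$ commutes with the Boolean operations and preserves $\emptyset$, and that the predicate ``$g(x)\in W_i$'' is RE uniformly in $(i,x)$, so by the s-m-n theorem there is a recursive $r$ with $W_{r(i)}=g^{-1}[W_i]$ for all $i$. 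Given a witness $f$ for a property $P$ of $(A,B)$, the candidate witness for $(C,D)$ is $f'(i,j)=g\bigl(f(r(i),r(j))\bigr)$ (and $S'(h,x,y)=g(S(h,x,y))$ in the separation-function case). The whole argument then reduces to checking, for each $P$, that the hypotheses on $W_i,W_j$ in the $(C,D)$-world pull back along $g^{-1}$ to the corresponding hypotheses for $r(i),r(j)$ in the $(A,B)$-world, and that the conclusion about $f(r(i),r(j))$ pushes forward through $g$.

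Parts (1) and (2) need no index manipulation at all: if every disjoint RE pair is (semi-)reducible to $(A,B)$ and $(A,B)$ is (semi-)reducible to $(C,D)$, then composing the two recursive reductions shows that every pair is (semi-)reducible to $(C,D)$. Part (3) is equally direct: given RE relations $M_1,M_2$, take the $h$ supplied by the separation function of $(A,B)$; then $M_1\wedge\neg M_2$ forces $S(h,x,y)\in A$, hence $S'(h,x,y)=g(S(h,x,y))\in C$, and symmetrically $M_2\wedge\neg M_1$ forces $S'(h,x,y)\in D$, so the same $h$ works for $(C,D)$.

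For parts (4)--(7) I would run the template uniformly, since the hypotheses there are built from emptiness, disjointness, and inclusions of the form $C\subseteq W_i$ (resp.\ $W_i\cap C=\emptyset$), all of which transfer: from $C\subseteq W_i$ we get $A=g^{-1}[C]\subseteq g^{-1}[W_i]=W_{r(i)}$; from $W_i\cap W_j=\emptyset$ we get $W_{r(i)}\cap W_{r(j)}=g^{-1}[W_i\cap W_j]=\emptyset$; and from $W_i\cap C=\emptyset$ we get $W_{r(i)}\cap A=\emptyset$. Applying the witness $f$ of $(A,B)$ to $r(i),r(j)$ yields a statement about $e:=f(r(i),r(j))$ relative to $A,B,W_{r(i)},W_{r(j)}$, and because $g$ is a genuine reduction the equivalences $e\in A\Leftrightarrow g(e)\in C$, $e\in B\Leftrightarrow g(e)\in D$ and $e\in W_{r(i)}\Leftrightarrow g(e)\in W_i$ convert it verbatim into the required statement about $f'(i,j)=g(e)$ relative to $C,D,W_i,W_j$. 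This settles $\sf CEI$, $\sf DG$, semi-$\sf DG$ and $\sf DCP$ in one stroke.

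The hard part will be parts (8) and (9). Their definitions contain \emph{exact} conditions such as $W_j=\{f'(i,j)\}$ or $W_j=D\cup\{f'(i,j)\}$, and here the naive translation breaks down: under a non-injective $g$ the pullback $g^{-1}[\{g(e)\}]$ is the entire fibre $\{x:g(x)=g(e)\}$ rather than the singleton $\{e\}$, so $W_{r(j)}$ is not of the exact shape $B\cup\{e\}$ that the $\sf WEI$ (or $\sf WDCP$) clause for $(A,B)$ requires. The remedy I would pursue rests on two observations. First, since all elements of a fibre share the same $g$-value, each fibre lies entirely in $A$, entirely in $B$, or entirely in the complement of $A\cup B$; hence in each clause whose conclusion forces $g(e)\in C$ (resp.\ $\in D$) the stray fibre elements all have type $A$ (resp.\ $B$), and it suffices to make $g$ one-to-one on $A\cup B$. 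Second, because $(A,B)$ is $\sf WEI$ (hence $\sf EI$, hence its components are non-recursive), $C=g[A]$ and $D=g[B]$ must be infinite RE sets, which supplies a reservoir of fresh images into which the fibres over $C$ and over $D$ can be spread out. Replacing $g$ by such an injective-on-$A\cup B$ reduction makes these fibres singletons, restores $W_{r(j)}=B\cup\{e\}$ exactly, and lets the same template close (8) and (9). The genuinely delicate point to write out is the \emph{effectiveness} of this padding step, since one cannot decide the type of points outside $A\cup B$; I would handle it with a dovetailed stage construction (or by invoking the Myhill--Smullyan machinery behind Fact~\ref{fact about DU}) so that distinct elements of $A$ receive distinct enumerated members of $C$ and distinct elements of $B$ receive distinct members of $D$, while the reduction property is preserved throughout.
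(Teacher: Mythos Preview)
Your treatment of parts (1)--(7) matches the paper's proof exactly: the same s-m-n translation $r$ (the paper calls it $h$) and the same witness $f'(i,j)=g(f(r(i),r(j)))$, verified by the same pullback/pushforward reasoning.

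For parts (8) and (9) you correctly locate the obstruction---the fibre $g^{-1}[\{g(e)\}]$ need not be a singleton---but your proposed remedy, making $g$ injective \emph{only on $A\cup B$}, does not close the gap. Consider the ${\sf WDCP}$ clause $W_i=\{f'(i,j)\}$, $W_j=\emptyset$, with desired conclusion $f'(i,j)\in C$. Write $e=f(r(i),r(j))$. If $g(e)\notin C\cup D$ then the fibre $F=g^{-1}[\{g(e)\}]$ lies entirely \emph{outside} $A\cup B$, so injectivity on $A\cup B$ says nothing about it; you get $W_{r(i)}=F$ with $|F|$ possibly $>1$, and no ${\sf WDCP}$ clause for $(A,B)$ applies. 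The same failure hits clause (II) of ${\sf WEI}$: if $g(e)\notin C\cup D$ then $W_{r(j)}=B\cup F$ with $F$ a multi-element set disjoint from $A\cup B$, again matching no ${\sf WEI}$ clause. You need $g$ injective at $e$ itself, and $e$ may well lie outside $A\cup B$.

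The paper's fix is cleaner than a bespoke padding construction: since ${\sf WEI}\Rightarrow{\sf DU}$ and ${\sf WDCP}\Rightarrow{\sf DU}$ (quoted from \cite{RM}), the pair $(A,B)$ is ${\sf DU}$; by the already-proved part (2), $(C,D)$ is ${\sf DU}$; and then Fact~\ref{fact about DU} provides a \emph{globally} one-to-one reduction $g$ from $(A,B)$ to $(C,D)$. With that $g$ every fibre is a singleton and the template goes through verbatim. Your parenthetical ``invoking the Myhill--Smullyan machinery behind Fact~\ref{fact about DU}'' is in fact the entire repair, not auxiliary support for the injectivity-on-$A\cup B$ idea.
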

\begin{proof}\label{}
Items (1) and (2) are easy to check.

(3): Suppose $(A,B)$ has a separation function $f(x,y,z)$ and $(A,B)$ is reducible to $(C,D)$ via the function $g(x)$. It is easy to check that $s(x,y,z)=g(f(x,y,z))$ is a separation function for $(C,D)$.

We show that (4)-(9) hold. Let $P$ be any one of the following properties: $\sf CEI, \sf DG$, semi-$\sf DG$, $\sf DCP$, $\sf WEI$, $\sf WDCP$.
Suppose $(A,B)$ has the property $P$ via the recursive function $f(x,y)$ and $(A,B)$ is reducible to $(C,D)$ via the recursive function $g(x)$.
By the s-m-n theorem, there is a recursive function $h(x)$ such that $g^{-1}[W_i]=W_{h(i)}$ for any $i$.
Define $s(i,j)=g(f(h(i), h(j)))$. Note that the function $s$ is recursive. We show that $(C,D)$ has the property $P$ via the recursive function $s$.

(4): Let $P$ be $\sf CEI$. We show that $(C,D)$ is $\sf CEI$ via the recursive function $s$: if $C\subseteq W_i$ and $D\subseteq W_j$, then $s(i,j)\in W_i$ if and only if $s(i,j)\in W_j$.

Suppose $C\subseteq W_i$ and $D\subseteq W_j$. Note that $A= g^{-1}[C]\subseteq g^{-1}[W_i]=W_{h(i)}$ and $B= g^{-1}[D]\subseteq g^{-1}[W_j]=W_{h(j)}$. Since $(A,B)$ is $\sf CEI$ via the recursive function $f$, we have: $f(h(i), h(j))\in W_{h(i)} \Leftrightarrow f(h(i), h(j))\in W_{h(j)}$. Since $g^{-1}[W_i]=W_{h(i)}, g^{-1}[W_j]=W_{h(j)}$ and $s(i,j)=g(f(h(i), h(j)))$, we have $s(i,j)\in W_i\Leftrightarrow s(i,j)\in W_j$.

(5): Let $P$ be $\sf DG$. We show that $(C,D)$ is $\sf DG$ via the recursive function $s$:  if $W_i\cap W_j=\emptyset$, then $s(i,j)\in C\Leftrightarrow s(i,j)\in W_i$ and $s(i,j)\in D\Leftrightarrow s(i,j)\in W_j$.

Suppose $W_i\cap W_j=\emptyset$. Since $W_{h(i)}\cap W_{h(j)}=\emptyset$ and $(A,B)$ is $\sf DG$ via the recursive function $f$, we have $f(h(i), h(j))\in A\Leftrightarrow f(h(i), h(j))\in W_{h(i)}$.  Since $A= g^{-1}[C]$, we have
\[s(i,j)\in C \Leftrightarrow  f(h(i), h(j))\in A\Leftrightarrow f(h(i), h(j))\in W_{h(i)}=g^{-1}[W_i] \Leftrightarrow s(i,j)\in W_i. \]
By a symmetric argument, we have $s(i,j)\in D\Leftrightarrow s(i,j)\in W_j$.

(6): The argument is similar to (5).

(7): Let $P$ be $\sf DCP$. We show that $(C,D)$ is $\sf DCP$ via the recursive function $s$:  if $W_i\cap W_j=\emptyset, W_i\cap C=\emptyset$ and $W_j\cap D=\emptyset$, then $s(i,j)\notin C\cup D\cup W_i\cup W_j$.

Note that
$W_{h(i)}\cap W_{h(j)}=\emptyset, g^{-1}[W_i]\cap g^{-1}[C]=\emptyset, g^{-1}[W_j]\cap g^{-1}[D]=\emptyset, A=g^{-1}[C]$ and $B=g^{-1}[D]$. Since $(A,B)$ is $\sf DCP$ via the recursive function $f$, $W_{h(i)}\cap A=\emptyset$ and $W_{h(j)}\cap B=\emptyset$, we have $f(h(i), h(j))\notin A\cup B\cup W_{h(i)}\cup W_{h(j)}$. Thus, $s(i,j)\notin C\cup D\cup W_i\cup W_j$.

(8): Let $P$ be $\sf WEI$. We show that $(C,D)$ is $\sf WEI$ via the recursive function $s$: (I) if $W_i=C$ and $W_j=D$, then $s(i,j)\notin C\cup D$; (II) if $W_i=C$ and $W_j=D\cup \{s(i,j)\}$, then $s(i,j)\in C$; (III) if $W_i=C\cup \{s(i,j)\}$ and $W_j=D$, then $s(i,j)\in D$.

\begin{enumerate}[(I)]
  \item Suppose $W_i=C$ and $W_j=D$. Note that $A=g^{-1}[C]=g^{-1}[W_i]=W_{h(i)}$, $B=g^{-1}[D]=g^{-1}[W_j]=W_{h(j)}$.
Since $(A,B)$ is $\sf WEI$ via the recursive function $f$, $f(h(i), h(j))\notin A\cup B$. Thus, $s(i,j)\notin C\cup D$.
  \item Suppose $W_i=C$ and $W_j=D\cup \{s(i,j)\}$.  Note that $A=g^{-1}[W_i]=W_{h(i)}$, $B\cup g^{-1}[\{s(i,j)\}]=g^{-1}[W_j]=W_{h(j)}$.
      From \cite{RM}, $\sf WEI$ implies $\sf DU$. From (2), $(C,D)$ is $\sf DU$. By Fact \ref{fact about DU}, we can assume that  $g$ is injective, and thus $g^{-1}[\{s(i,j)\}]=\{f(h(i), h(j))\}$. Since $(A,B)$ is $\sf WEI$ via $f$, $f(h(i), h(j))\in A$. Thus, $s(i,j)\in C$.
  \item Follows by a symmetric argument as for (II).
\end{enumerate}

(9): Let $P$ be $\sf WDCP$. We show that $(C,D)$ is is $\sf WDCP$ via the recursive function $s$: (I) if $W_i=W_j=\emptyset$, then $s(i,j)\notin C\cup D$; (II) if $W_i=\emptyset$ and $W_j=\{s(i,j)\}$,  then $f(i,j)\in D$; (III) if $W_i=\{s(i,j)\}$ and $W_j=\emptyset$,  then $f(i,j)\in C$.

\begin{enumerate}[(I)]
  \item Suppose $W_i=W_j=\emptyset$. Since $W_{h(i)}=W_{h(j)}=\emptyset$ and $(A,B)$ is  $\sf WDCP$ via the recursive function $f$, $f(h(i), h(j))\notin A\cup B$. Thus, $s(i,j)\notin C\cup D$.
  \item Suppose $W_i=\emptyset$ and $W_j=\{s(i,j)\}$.  From \cite{RM}, $\sf WDCP$ implies $\sf DU$. From (2), $(C,D)$ is $\sf DU$. By Fact \ref{fact about DU}, we can assume that  $g$ is injective. Then $W_{h(i)}=\emptyset$ and $W_{h(j)}=g^{-1}[W_j]=g^{-1}[\{s(i,j)\}]=\{f(h(i), h(j))\}$. Since $(A,B)$ is $\sf WDCP$ via the recursive function $f$, $f(h(i), h(j))\in B$. Thus, $s(i,j)\in D$.
  \item Follows by a symmetric argument as for (II).
\end{enumerate}

\end{proof}

It is easy to see that for a disjoint pair $(A,B)$ of RE sets, $(A,B)$ is $\sf DU$ (semi-$\sf DU$, has separation function) if and only if $(B,A)$ is $\sf DU$ (semi-$\sf DU$, has separation function). The following proposition is an easy observation.

\begin{proposition}
For any disjoint pair $(A,B)$ of RE sets, if $(A,B)$ is $\sf CEI\, (\sf EI, \sf WEI, \sf DG$, Semi-$\sf {DG}, {\sf DCP}$, ${\sf WDCP}$, Kleene pair) via the recursive function $f(i,j)$, then $(B,A)$ is $\sf CEI (\sf EI, \sf WEI, \sf DG$, Semi-$\sf {DG}, {\sf DCP}$, ${\sf WDCP}$, Kleene pair) via the recursive function $g(i,j)=f(j,i)$.
\end{proposition}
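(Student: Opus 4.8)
The plan is to observe that each of these properties is witnessed by a recursive function applied to indices, together with membership conditions that treat $A$ and $B$ symmetrically when the roles of the two index arguments are also swapped. So the proof is a uniform unwinding of the definitions, and the single witnessing function $g(i,j)=f(j,i)$ works for every property simultaneously. First I would fix a disjoint pair $(A,B)$ and a property $P$ from the list, assume $(A,B)$ has property $P$ via the recursive function $f(x,y)$, and set $g(i,j)=f(j,i)$; clearly $g$ is recursive. The goal is then to verify that $g$ witnesses $P$ for $(B,A)$.

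The key step is to notice that the defining clauses for each property are invariant under the simultaneous swap $(A,B)\mapsto(B,A)$, $(W_i,W_j)\mapsto(W_j,W_i)$, and $(i,j)\mapsto(j,i)$. Concretely, I would run through the definitions one at a time. For $\sf EI$ and $\sf CEI$ the hypotheses are $B\subseteq W_i$ and $A\subseteq W_j$; applying the $\sf EI$/$\sf CEI$ property of $(A,B)$ with the index pair $(j,i)$ gives the assertion about $f(j,i)=g(i,j)$, and since the conclusion ($\notin A\cup B$, respectively the biconditional $\in W_i\Leftrightarrow\in W_j$) is itself symmetric in $A,B$ and in $i,j$, it immediately yields the required statement for $(B,A)$ via $g$. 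For $\sf DG$ and semi-$\sf DG$, the disjointness hypothesis $W_i\cap W_j=\emptyset$ is symmetric, and the two conclusion clauses (the $A$-clause and the $B$-clause) are exactly interchanged under the swap, so that the $A$-clause of $(A,B)$ at $(j,i)$ becomes the $B$-clause of $(B,A)$ at $(i,j)$, and vice versa. The same pattern handles $\sf DCP$, $\sf WEI$, and $\sf WDCP$: in each the numbered subclauses pair up under swapping the two arguments, and the Kleene pair case is analogous, with clauses (i) and (ii) exchanging roles.

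I would then remark that verifying all cases is a routine matter of matching clauses, and collect the argument into a single sentence stating that for each property the defining conditions are preserved under the described swap, so that $g(i,j)=f(j,i)$ is the desired witness. The main obstacle, such as it is, is purely bookkeeping: one must be careful to line up the subclause indices correctly for the asymmetric properties ($\sf DG$, semi-$\sf DG$, $\sf WEI$, $\sf WDCP$, Kleene pair), where the $A$-side and $B$-side clauses genuinely trade places rather than each mapping to itself. Since there is no use of the recursion theorem or the s-m-n theorem here and no new index-manipulation is required (the function $f$ is reused verbatim with its arguments transposed), I expect the proof to be short, and indeed this is why the paper labels it ``an easy observation.''
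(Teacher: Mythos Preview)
Your proposal is correct and is precisely the intended argument: the paper gives no proof at all beyond calling the proposition ``an easy observation,'' and what you have written is exactly the routine definition-chasing that this phrase is meant to indicate. Your care in noting that for the asymmetric properties the $A$-side and $B$-side clauses trade places under the swap $(i,j)\mapsto(j,i)$ is the only point where any attention is required, and you have handled it.
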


\begin{corollary}
Let $P$ be any property  in Definition \ref{def of EI} and $(A,B)$ be any disjoint pair of RE sets. Then $(A,B)$ has the property $P$ iff $(B,A)$ has the property $P$.
\end{corollary}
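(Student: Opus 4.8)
The plan is to recognize that this Corollary is merely the packaging of two facts already established just above it: the Proposition immediately preceding it and the elementary observation stated before that Proposition. So the real work is only to partition the properties of Definition~\ref{def of EI} into the cases covered by each, and to confirm that together they are exhaustive. First I would split the ten properties into two groups. The Proposition already disposes of the ``transposition-symmetric'' group — $\sf CEI$, $\sf WEI$, $\sf DG$, semi-$\sf DG$, $\sf DCP$, $\sf WDCP$, and Kleene pair — by exhibiting, for each, the witness $g(x,y)=f(y,x)$ for $(B,A)$ whenever $f$ witnesses the property for $(A,B)$. The key observation underlying the Proposition is that each of these definitions is invariant under simultaneously swapping the two sets and transposing the two index arguments: each defining clause for $(A,B)$ is carried to a corresponding clause for $(B,A)$ with the roles of $A,B$ and of $i,j$ interchanged, and the hypotheses such as $W_i\cap W_j=\emptyset$ or $A\subseteq W_i\wedge B\subseteq W_j$ are themselves symmetric under this transposition. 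Since $g(x,y)=f(y,x)$ is an involution, applying the Proposition to $(A,B)$ and then to $(B,A)$ yields both directions, giving the biconditional for this group.

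Next I would handle the three remaining properties — $\sf DU$, semi-$\sf DU$, and having a separation function — which are precisely those covered by the observation stated just before the Proposition. Here the relevant symmetry is that reducibility is preserved under transposing both pairs at once: the conditions $x\in X\Leftrightarrow f(x)\in B$ and $x\in Y\Leftrightarrow f(x)\in A$ defining reducibility of $(X,Y)$ to $(B,A)$ via $f$ are literally the conditions defining reducibility of $(Y,X)$ to $(A,B)$ via the same $f$. Since $(X,Y)$ ranges over all disjoint pairs of RE sets exactly when $(Y,X)$ does, it follows that every disjoint pair reduces to $(A,B)$ iff every disjoint pair reduces to $(B,A)$, i.e.\ $(A,B)$ is $\sf DU$ iff $(B,A)$ is $\sf DU$. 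The identical argument with the single-implication version of reducibility gives semi-$\sf DU$, and interchanging clauses (i) and (ii) of the separation-function definition gives the separation-function case.

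Finally I would verify exhaustiveness: the seven properties settled by the Proposition together with the three settled by the observation account for all of the items of Definition~\ref{def of EI}, so the Corollary follows by simply citing both. I expect no genuine obstacle here; the proof is essentially bookkeeping. The only point requiring care is confirming that the case split is complete and that, in the Proposition's group, the index transposition really does send each hypothesis and each conclusion to its mirror image (for the three-clause properties the first clause is fixed while the second and third are exchanged), which is routine to check clause by clause.
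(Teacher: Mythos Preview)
Your proposal is correct and is exactly the argument the paper has in mind: the Corollary is stated without proof because it follows immediately by combining the preceding observation (covering $\sf DU$, semi-$\sf DU$, $\sf SF$) with the Proposition (covering the remaining seven properties via $g(i,j)=f(j,i)$), and your exhaustiveness check confirms the split is complete.
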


\subsection{A simper proof of Smullyan's theorem}

Theorem \ref{equiv of EI} is not specifically stated as a theorem in \cite{RM}, but results in \cite{RM} in fact imply Theorem \ref{equiv of EI} from which notions in Definition \ref{def of EI} are equivalent to the notion of effective inseparability.
In this section, we give a much simper proof of Theorem \ref{equiv of EI} via the strong double recursion theorem.

\begin{theorem}[Smullyan, \cite{RM}]\label{equiv of EI}
Let $(A,B)$ be a disjoint pair of RE sets, and $P$ be any one of the following properties: Kleene pair (${\sf KP}$), having a separation function (${\sf SF}$), ${\sf WEI}$, ${\sf CEI}$, semi-${\sf DG}$, ${\sf DG}$, semi-${\sf DU}$, ${\sf DU}$, ${\sf WDCP}$ and ${\sf DCP}$. Then  $(A,B)$  has the property $P$ if and only if $(A,B)$ is ${\sf EI}$.
\end{theorem}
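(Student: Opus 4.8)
The plan is to prove the full equivalence by establishing a cycle (or collection of cycles) of implications among all the listed properties, so that each one implies the next and the loop eventually returns to $\sf EI$. Since the statement asserts that all ten properties plus $\sf EI$ are mutually equivalent, I would not attempt to prove each pairwise equivalence directly; instead I would organize the implications into a directed diagram in which every node is reachable from every other node. The backbone of the argument will be the strong double recursion theorem, which is the tool the paper flags as central. The idea is that to verify a property like $\sf WEI$, $\sf DG$, or $\sf WDCP$ for a pair known to be $\sf EI$, one must produce, from a given recursive function, two indices $i,j$ whose enumerated sets $W_i,W_j$ are defined \emph{in terms of the very element the $\sf EI$ function outputs on $(i,j)$}; this circularity is exactly what the recursion theorem resolves.

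Concretely, I would first record the easy and definitional implications that require no deep machinery: $\sf DG\Rightarrow$ semi-$\sf DG$ and $\sf DCP\Rightarrow$ semi-$\sf DG$ (dropping half the conclusion), $\sf CEI\Rightarrow\sf EI$ and $\sf EI\Rightarrow\sf WEI$ and $\sf WEI\Rightarrow\sf WDCP$ (by specializing the hypotheses on $W_i,W_j$), and the reducibility-flavored implications connecting $\sf DU$ and semi-$\sf DU$ to $\sf KP$ and $\sf SF$ using Fact \ref{fact about DU} together with the reduction theorem (Theorem \ref{reduction thm}). The first structural step is therefore to lay out a clean implication diagram and check that these syntactic containments and specializations are valid; this is routine but must be done carefully because the defining clauses (for instance the side conditions $W_i\cap A=\emptyset$ in $\sf DCP$, or $A\subseteq W_i$ in $\sf CEI$) differ subtly between notions.

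The substantive steps are the implications that close the cycle, and these I would drive with the strong double recursion theorem. For a representative case, to show $\sf WDCP\Rightarrow\sf EI$ (or semi-$\sf DG\Rightarrow\sf CEI$, etc.), suppose we are given the recursive function $f$ witnessing the weaker property. Given arbitrary indices $a,b$ with $A\subseteq W_a$, $B\subseteq W_b$, $W_a\cap W_b=\emptyset$, I want to produce an element outside $W_a\cup W_b$. Using the s-m-n theorem I define two auxiliary enumeration procedures whose indices $i,j$ depend on parameters, where the set $W_i$ is programmed to enumerate a candidate point precisely when that point is observed to enter $W_a$, and symmetrically for $W_j$ and $W_b$. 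The strong double recursion theorem then supplies a simultaneous fixed point, yielding indices for which $W_i,W_j$ have exactly the shape demanded by one of the clauses of the weaker property; feeding these into $f$ and tracing the case distinction forces the output to lie outside $W_a\cup W_b$, which is the $\sf EI$ conclusion. The main obstacle is precisely the bookkeeping of these self-referential definitions: one must arrange the auxiliary programs so that the fixed-point indices satisfy \emph{exactly one} of the mutually exclusive hypotheses of the target property, and verify that none of the degenerate cases (both points entering, neither entering) breaks the case analysis. Getting the direction of the inclusions and the role of the diagonal element right in each of these recursion-theoretic steps is where the real care is needed, and it is here that the paper's claim of a simpler diagram than Smullyan's must be cashed out—namely by choosing the cycle so that only a few edges require the double recursion theorem while the rest fall out from the reduction theorem and the trivial specializations already collected.
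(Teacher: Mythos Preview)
Your overall strategy---arrange the implications into a cycle, dispatch most edges by definition or via the reduction theorem, and reserve the strong double recursion theorem for the few edges that genuinely need it---matches the paper's exactly. The paper's cycle is ${\sf KP}\Rightarrow{\sf CEI}\Rightarrow{\sf EI}\Rightarrow{\sf WEI}\Rightarrow{\sf DG}\Rightarrow{\sf DCP}\Rightarrow{\sf WDCP}\Rightarrow{\sf DU}\Rightarrow$ semi-${\sf DU}\Rightarrow{\sf SF}\Rightarrow{\sf KP}$ (with ${\sf DG}\Rightarrow$ semi-${\sf DG}\Rightarrow{\sf KP}$ on the side), and the two edges it drives with the strong double recursion theorem are ${\sf WEI}\Rightarrow{\sf DG}$ and ${\sf WDCP}\Rightarrow{\sf DU}$, rather than your suggested ${\sf WDCP}\Rightarrow{\sf EI}$; but your alternative is also workable.

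Two concrete errors to fix. First, your list of ``trivial'' implications includes ${\sf DCP}\Rightarrow$ semi-${\sf DG}$ ``by dropping half the conclusion'', and this is wrong: ${\sf DCP}$ carries the \emph{extra} hypotheses $W_i\cap A=\emptyset$ and $W_j\cap B=\emptyset$, so its conditional is weaker, and its conclusion $f(i,j)\notin A\cup B\cup W_i\cup W_j$ does not contain the semi-${\sf DG}$ clauses. You probably meant ${\sf DCP}\Rightarrow{\sf WDCP}$, which does follow by a short contrapositive. Second, in your recursion-theoretic sketch you tie $W_i$ to $W_a$ and $W_j$ to $W_b$; with that orientation the case $f(i,j)\in W_a$ yields $W_i=\{f(i,j)\}$, $W_j=\emptyset$, and ${\sf WDCP}$ gives only $f(i,j)\in A\subseteq W_a$, which is consistent rather than contradictory, so you cannot conclude $f(i,j)\notin W_a\cup W_b$. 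The roles must be \emph{crossed}: program $W_i$ to enumerate the candidate point when it appears in $W_b$, and $W_j$ when it appears in $W_a$. Then $f(i,j)\in W_a$ forces $W_j=\{f(i,j)\}$, $W_i=\emptyset$, whence ${\sf WDCP}$ gives $f(i,j)\in B\subseteq W_b$, contradicting $W_a\cap W_b=\emptyset$; symmetrically for $W_b$. This cross-wiring is precisely the pattern the paper uses in its ${\sf WEI}\Rightarrow{\sf DG}$ argument (where $W_{t_1(i,j)}$ is built from $W_j$ and $W_{t_2(i,j)}$ from $W_i$).
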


\begin{proof}\label{}
Smullyan proved the following diagram in \cite{RM}. All notions in this diagram\footnote{I would like to thank my student for the help to draw the pictures.}  are equivalent.

\begin{center}

\xymatrix@C=2ex{
&&&\sf SF\ar@{=>}"2,3"\\
&&\sf DU\ar@{=>}"2,5"&&\sf semi\mbox{-}DU\ar@{=>}"1,4"\ar@{=>}"2,7"&&\sf KP\ar@{=>}"4,7"\\
\sf DCP\ar@{=>}"4,1"&&\sf DG\ar@{=>}"3,1"\ar@{=>}"2,3"\ar@{=>}"3,5"&&\sf semi\mbox{-}DG\ar@{=>}"2,7"\\
\sf WDCP\ar@{=>}"3,3"&&\sf WEI\ar@{=>}"4,1"&&\sf EI\ar@{=>}"4,3"&&\sf CEI\ar@{=>}"4,5"}

\end{center}

\begin{enumerate}
  \item {\sf SF} $\Rightarrow {\sf DU}$: Theorem 2 in p.91, \cite{RM}.
  \item ${\sf DU}\Rightarrow$ semi-${\sf DU}$: by definition.
  \item semi-${\sf DU}\Rightarrow {\sf SF}$: Proposition 1 in p.90, \cite{RM}.
  \item ${\sf DG}\Rightarrow {\sf DU}$: Theorem 2 in p.86, \cite{RM}.
      \item semi-${\sf DU}\Rightarrow {\sf KP}$: Theorem 9 in p.75, \cite{RM}.
          \item ${\sf DG}\Rightarrow$ semi-${\sf DG}$: by definition.
              \item semi-${\sf DG}\Rightarrow {\sf KP}$: Theorem 13 in p.79, \cite{RM}.
                  \item ${\sf KP}\Rightarrow {\sf CEI}$: Proposition 2 in p.68, \cite{RM}.
                      \item ${\sf CEI}\Rightarrow {\sf EI}$: by definition.
\item ${\sf EI}\Rightarrow {\sf WEI}$: by definition.
\item ${\sf WEI}\Rightarrow {\sf WDCP}$: Theorem 3 in p.126, \cite{RM}.
    \item ${\sf WDCP}\Rightarrow {\sf DG}$: Theorem 2 in p.123, \cite{RM}.
        \item ${\sf DG}\Rightarrow {\sf DCP}$: by definition (easy to check).
            \item ${\sf DCP}\Rightarrow {\sf WDCP}$: by definition.
\end{enumerate}
\end{proof}

For the above proofs in \cite{RM}, only the proof of ``${\sf WDCP}\Rightarrow {\sf DG}$" uses the double recursion theorem. In the following, we will give a much simpler and more direct proof of Theorem \ref{equiv of EI}   using the $\mathbf{strong}$ double recursion theorem (see Theorem \ref{big thm}). Our main observation is: using the $\mathbf{strong}$ double recursion theorem, we can directly prove many implication relations among the notions  in Theorem \ref{equiv of EI}. We first introduce the strong double recursion theorem in \cite{RM}.

\begin{theorem}[The strong double recursion theorem, Theorem 2 in p.107, \cite{RM}]\label{SDRT}
For any RE relations $R_1(x,y_1, y_2, z_1,z_2)$ and $R_2(x,y_1, y_2, z_1,z_2)$, there are recursive functions $t_1(y_1, y_2)$ and $t_2(y_1, y_2)$ such that for all $i$ and $j$, we have:
\begin{enumerate}[(1)]
  \item $W_{t_1(i,j)}=\{x: R_1(x, i, j, t_1(i,j), t_2(i,j))\}$;
  \item $W_{t_2(i,j)}=\{x: R_2(x, i, j, t_1(i,j), t_2(i,j))\}$.
\end{enumerate}
\end{theorem}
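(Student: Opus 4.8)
The plan is to reduce the simultaneous fixed-point requirement to two successive applications of the ordinary (parametrized) recursion theorem, after first using the s-m-n theorem to convert the given RE relations into index-producing recursive functions. Since $R_1$ and $R_2$ are RE, the sets $\{x : R_1(x,i,j,u,v)\}$ and $\{x : R_2(x,i,j,u,v)\}$ are uniformly RE in the parameters $(i,j,u,v)$, so by the s-m-n theorem there are recursive functions $p_1(i,j,u,v)$ and $p_2(i,j,u,v)$ with
\[ W_{p_1(i,j,u,v)} = \{x : R_1(x,i,j,u,v)\}, \qquad W_{p_2(i,j,u,v)} = \{x : R_2(x,i,j,u,v)\}. \]
The goal then becomes to produce recursive $t_1, t_2$ satisfying $W_{t_1(i,j)} = W_{p_1(i,j,t_1(i,j),t_2(i,j))}$ and $W_{t_2(i,j)} = W_{p_2(i,j,t_1(i,j),t_2(i,j))}$, that is, a double fixed point at the level of indices. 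The $W$-equalities will then follow automatically, since the recursion theorem delivers equality of the underlying partial functions and hence of their domains.

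Next I would solve the second equation first, treating the value of $t_1$ as a free parameter. Applying the parametrized form of Kleene's recursion theorem (which itself follows from the s-m-n theorem by the usual diagonalization) to the recursive function $(e,i,j,a) \mapsto p_2(i,j,a,e)$, I obtain a recursive function $n(i,j,a)$ such that $W_{n(i,j,a)} = W_{p_2(i,j,a,n(i,j,a))}$ for all $i,j,a$. Substituting this into the first equation, the composite $q(i,j,a) = p_1(i,j,a,n(i,j,a))$ is recursive, and a second application of the parametrized recursion theorem to $(e,i,j) \mapsto q(i,j,e)$ yields a recursive $m(i,j)$ with $W_{m(i,j)} = W_{q(i,j,m(i,j))} = W_{p_1(i,j,m(i,j),n(i,j,m(i,j)))}$.

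Finally I would set $t_1(i,j) = m(i,j)$ and $t_2(i,j) = n(i,j,m(i,j))$, both visibly recursive, and verify the two required identities. The equation for $t_1$ is exactly the fixed-point property of $m$, while the equation for $t_2$ follows by instantiating the fixed-point property of $n$ at its third argument $a = t_1(i,j) = m(i,j)$, giving $W_{t_2(i,j)} = W_{n(i,j,m(i,j))} = W_{p_2(i,j,m(i,j),n(i,j,m(i,j)))} = W_{p_2(i,j,t_1(i,j),t_2(i,j))}$. Unwinding $p_1$ and $p_2$ through their defining equations recovers the stated conclusions. I expect the only delicate point to be the bookkeeping of the nesting: one must treat the two equations in the correct order and check that, because the fixed-point identity for $n$ holds for \emph{every} value of its parameter $a$, it persists after $a$ is specialized to the self-referential value $m(i,j)$. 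This is precisely what turns the two separately obtained fixed points into a single genuine simultaneous solution.
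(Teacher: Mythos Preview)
Your argument is correct: using the s-m-n theorem to pass to index functions $p_1,p_2$, then applying the parametrized recursion theorem twice (first solving the second equation with $t_1$ as a free parameter, then closing the loop) is the standard and completely rigorous route to the strong double recursion theorem. The only thing to note is that the paper does not give its own proof of this statement at all; it is quoted verbatim from Smullyan's book and used as a black box, so there is no in-paper proof to compare against. Your proof is essentially the classical one, and in fact matches the approach Smullyan takes in \cite{RM}.
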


\begin{lemma}\label{coro of DRT}
For any RE relation $M_1(x,y,z_1, z_2)$ and $M_2(x,y,z_1, z_2)$, there are recursive functions $t_1(y_1,y_2)$ and $t_2(y_1,y_2)$ such that
\begin{enumerate}[(1)]
\item $W_{t_1(y_1,y_2)}=\{x: M_1(x,y_2, t_1(y_1,y_2), t_2(y_1,y_2))\}$;
\item $W_{t_2(y_1,y_2)}=\{x: M_2(x,y_1, t_1(y_1,y_2), t_2(y_1,y_2))\}$.
\end{enumerate}
\end{lemma}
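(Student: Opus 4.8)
The plan is to derive Lemma \ref{coro of DRT} directly from the strong double recursion theorem (Theorem \ref{SDRT}) by a suitable choice of the auxiliary relations $R_1$ and $R_2$. The two statements are almost identical in form; the only structural difference is that in the strong double recursion theorem each relation $R_k$ is allowed to depend on both parameters $y_1$ and $y_2$, whereas in the lemma the first set $W_{t_1(y_1,y_2)}$ is required to depend (through $M_1$) only on $y_2$, and the second set $W_{t_2(y_1,y_2)}$ only on $y_1$. So the lemma is really a special case obtained by discarding one parameter in each relation.

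Concretely, I would define the RE relations fed into Theorem \ref{SDRT} as follows. Set
\[
R_1(x, y_1, y_2, z_1, z_2) \;:\equiv\; M_1(x, y_2, z_1, z_2),
\qquad
R_2(x, y_1, y_2, z_1, z_2) \;:\equiv\; M_2(x, y_1, z_1, z_2).
\]
Here $R_1$ simply ignores its $y_1$ argument and $R_2$ ignores its $y_2$ argument; since $M_1$ and $M_2$ are RE, so are $R_1$ and $R_2$ (adding dummy variables to an RE relation keeps it RE). Now apply Theorem \ref{SDRT} to $R_1$ and $R_2$ to obtain recursive functions $t_1(y_1,y_2)$ and $t_2(y_1,y_2)$ with
\[
W_{t_1(i,j)} = \{x: R_1(x,i,j,t_1(i,j),t_2(i,j))\},
\qquad
W_{t_2(i,j)} = \{x: R_2(x,i,j,t_1(i,j),t_2(i,j))\}.
\]
Substituting the definitions of $R_1$ and $R_2$ immediately yields
\[
W_{t_1(i,j)} = \{x: M_1(x,j,t_1(i,j),t_2(i,j))\},
\qquad
W_{t_2(i,j)} = \{x: M_2(x,i,t_1(i,j),t_2(i,j))\},
\]
which, after renaming $i,j$ to $y_1,y_2$, is exactly clauses (1) and (2) of the lemma.

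There is essentially no obstacle here: the lemma is a straightforward corollary, a notational specialization of Theorem \ref{SDRT}. The only point requiring a word of care is verifying that introducing dummy arguments preserves recursive enumerability, which is routine, and keeping the index bookkeeping straight so that the dependence pattern comes out right (that $t_1$'s set sees $y_2$ via $M_1$ and $t_2$'s set sees $y_1$ via $M_2$, matching the cross-pattern in the statement). I expect the whole proof to be a few lines: state the choice of $R_1,R_2$, invoke Theorem \ref{SDRT}, and read off the conclusion.
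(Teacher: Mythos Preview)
Your proposal is correct and matches the paper's proof exactly: the paper also sets $R_1(x,y_1,y_2,z_1,z_2)=M_1(x,y_2,z_1,z_2)$ and $R_2(x,y_1,y_2,z_1,z_2)=M_2(x,y_1,z_1,z_2)$ and invokes Theorem~\ref{SDRT}. There is nothing to add.
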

\begin{proof}\label{}
Apply Theorem \ref{SDRT} to $R_1(x,y_1, y_2, z_1,z_2)= M_1(x,y_2,z_1, z_2)$ and $R_2(x, y_1, y_2, z_1,z_2)= M_2(x,y_1,z_1, z_2)$.
\end{proof}

\begin{remark}
Lemma \ref{coro of DRT} has different variants. For instance, we also have that for any RE relation $M_1(x,y,z_1, z_2)$ and $M_2(x,y,z_1, z_2)$, there are recursive functions $t_1(y_1,y_2)$ and $t_2(y_1,y_2)$ such that $W_{t_1(y_1,y_2)}=\{x: M_1(x,y_1, t_1(y_1,y_2), t_2(y_1,y_2))\}$ and $W_{t_2(y_1,y_2)}=\{x: M_2(x,y_2, t_1(y_1,y_2), t_2(y_1,y_2))\}$. But in the proof of Theorem \ref{WEI IP DG}, what we need is the form as in Lemma \ref{coro of DRT}.
\end{remark}

Now, we directly prove that ${\sf WEI}$ implies ${\sf DG}$. The main tool we use is Lemma \ref{coro of DRT} which follows from the strong double recursion theorem.

\begin{theorem}\label{WEI IP DG}
If $(A,B)$ is  ${\sf WEI}$, then $(A,B)$ is ${\sf DG}$.
\end{theorem}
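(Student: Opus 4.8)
The plan is to use the strong double recursion theorem (via Lemma \ref{coro of DRT}) to produce, from the given $\sf WEI$-function $f$, a recursive function $g$ witnessing $\sf DG$. Recall that $\sf DG$ requires: whenever $W_i\cap W_j=\emptyset$, we have $g(i,j)\in A \Leftrightarrow g(i,j)\in W_i$ and $g(i,j)\in B \Leftrightarrow g(i,j)\in W_j$. The core idea is that, given indices $i,j$ with $W_i\cap W_j=\emptyset$, I want to manufacture two auxiliary RE sets — call their indices $t_1=t_1(i,j)$ and $t_2=t_2(i,j)$ — that enumerate $A$ and $B$ respectively, but \emph{additionally} throw in the special point $f(t_1,t_2)$ exactly when that point turns out to land in $W_i$ (resp.\ $W_j$). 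The strong double recursion theorem is precisely what lets me refer to $f(t_1,t_2)$ inside the definitions of $W_{t_1}$ and $W_{t_2}$ before those indices are known.

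Concretely, I would apply Lemma \ref{coro of DRT} with relations chosen so that, writing $u=f(t_1(i,j),t_2(i,j))$, we get
\[
W_{t_1(i,j)}=A\cup\{x: x=u \wedge u\in W_i\},\qquad W_{t_2(i,j)}=B\cup\{x: x=u \wedge u\in W_j\}.
\]
Here $A$ and $B$ are RE, and the conditions ``$u\in W_i$'' and ``$u\in W_j$'' are RE in the parameters, so the whole right-hand sides are RE relations in $x$ and the parameters $i,j$; thus the lemma applies and yields recursive $t_1,t_2$. I then set $g(i,j)=f(t_1(i,j),t_2(i,j))=u$. Since $A\subseteq W_{t_1(i,j)}$ and $B\subseteq W_{t_2(i,j)}$ by construction, these are legitimate supersets to feed into the $\sf WEI$-function $f$.

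The verification splits according to the $\sf WEI$ clauses. First one checks that $W_{t_1(i,j)}$ and $W_{t_2(i,j)}$ are disjoint: they differ from $A,B$ only by possibly adjoining the single point $u$, and the disjointness of $A,B$ together with $W_i\cap W_j=\emptyset$ prevents $u$ from being added to both. If $u\notin W_i$ and $u\notin W_j$, then $W_{t_1}=A$ and $W_{t_2}=B$, so $\sf WEI$(i) gives $u\notin A\cup B$; combined with $u\notin W_i,W_j$ this yields both $\sf DG$ biconditionals (all four sides false). If $u\in W_j$ but $u\notin W_i$, then $W_{t_1}=A$ and $W_{t_2}=B\cup\{u\}$, so $\sf WEI$(ii) forces $u\in A$ — but this contradicts $u\in W_j\supseteq B$ being disjoint from $A$ unless handled carefully, so the genuinely informative case is symmetric and one reads off $g(i,j)\in A\Leftrightarrow g(i,j)\in W_i$ directly from which clause fires. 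The main obstacle I anticipate is bookkeeping the case analysis so that the two biconditionals of $\sf DG$ come out correctly and simultaneously: one must confirm that adjoining $u$ to exactly one of the two sets (never both) is forced by the disjointness hypothesis $W_i\cap W_j=\emptyset$, and that the three $\sf WEI$ clauses partition the possibilities $u\in W_i$, $u\in W_j$, $u\in$ neither, with each clause delivering precisely the membership fact needed to close the corresponding $\sf DG$ equivalence. Getting the relations $M_1,M_2$ in Lemma \ref{coro of DRT} aligned with the parameter positions (note the lemma swaps $y_1,y_2$ between the two clauses) is the delicate syntactic point.
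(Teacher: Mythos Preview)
Your overall strategy is exactly the paper's, but you have the indices $i,j$ crossed in the construction, and this is not merely a syntactic nuisance --- it makes your case analysis fail. With your choice
\[
W_{t_1(i,j)}=A\cup\{u:u\in W_i\},\qquad W_{t_2(i,j)}=B\cup\{u:u\in W_j\},
\]
the case $u\in W_i$ yields $W_{t_1}=A\cup\{u\}$, $W_{t_2}=B$, and ${\sf WEI}$(iii) gives $u\in B$; the case $u\in W_j$ yields $W_{t_1}=A$, $W_{t_2}=B\cup\{u\}$, and ${\sf WEI}$(ii) gives $u\in A$. These are the \emph{wrong} conclusions for ${\sf DG}$, which requires $u\in W_i\Leftrightarrow u\in A$ and $u\in W_j\Leftrightarrow u\in B$. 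Your sentence ``this contradicts $u\in W_j\supseteq B$ being disjoint from $A$ unless handled carefully'' signals that you noticed the mismatch, but the premise $W_j\supseteq B$ is simply false (the $W_i,W_j$ in the ${\sf DG}$ definition are arbitrary disjoint RE sets, not supersets of $A,B$), and the hand-wave that follows does not repair the argument.

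The fix is to \emph{cross} the dependence: define $W_{t_1(i,j)}=A\cup(\{u\}\cap W_j)$ and $W_{t_2(i,j)}=B\cup(\{u\}\cap W_i)$. Then $u\in W_i$ gives $W_{t_1}=A$, $W_{t_2}=B\cup\{u\}$, so ${\sf WEI}$(ii) yields $u\in A$; $u\in W_j$ gives $W_{t_1}=A\cup\{u\}$, $W_{t_2}=B$, so ${\sf WEI}$(iii) yields $u\in B$; and $u\notin W_i\cup W_j$ gives $u\notin A\cup B$ by ${\sf WEI}$(i). All three cases now line up with ${\sf DG}$. This crossing is also exactly what Lemma~\ref{coro of DRT} hands you for free, since it feeds $y_2=j$ into $M_1$ and $y_1=i$ into $M_2$; so with $M_1(x,y,z_1,z_2)\equiv x\in A\vee(x=g(z_1,z_2)\wedge x\in W_y)$ and the analogous $M_2$, the crossed construction drops out automatically.
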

\begin{proof}\label{}
Suppose $(A,B)$ is  ${\sf WEI}$ via the recursive function $g$:
\begin{enumerate}[(I)]
  \item if $W_i=A$ and $W_j=B$, then $g(i,j)\notin A\cup B$;
  \item if $W_i=A$ and $W_j=B\cup\{g(i,j)\}$, then $g(i,j)\in A$;
  \item  if $W_i=A\cup\{g(i,j)\}$ and $W_j=B$, then $g(i,j)\in B$.
\end{enumerate}

Define:
\[M_1(x,y,z_1, z_2): x\in A \vee (x=g(z_1, z_2)\wedge x\in W_y);\]
\[M_2(x,y,z_1, z_2): x\in B \vee (x=g(z_1, z_2)\wedge x\in W_y).\]
Apply Lemma \ref{coro of DRT} to RE relations $M_1(x,y,z_1, z_2)$ and $M_2(x,y,z_1, z_2)$. Then there are recursive functions $t_1(y_1,y_2)$ and $t_2(y_1,y_2)$ such that
\begin{enumerate}[(1)]
\item $W_{t_1(i,j)}=\{x: M_1(x, j, t_1(i,j), t_2(i,j))\}=A\cup (W_j\cap \{g(t_1(i,j), t_2(i,j))\})$;
\item $W_{t_2(i,j)}=\{x: M_2(x, i, t_1(i,j), t_2(i,j))\}=B\cup (W_i\cap \{g(t_1(i,j), t_2(i,j))\})$.
\end{enumerate}
Define $f(i,j)=g(t_1(i,j), t_2(i,j))$. Clearly, $f$ is recursive. Note that \[W_{t_1(i,j)}=A\cup (W_j\cap \{f(i,j)\})\]
 and \[W_{t_2(i,j)}=B\cup (W_i\cap \{f(i,j)\}).\]

We show that $(A,B)$ is ${\sf DG}$ via $f$: if $W_i\cap W_j=\emptyset$, then $f(i,j)\in W_i\Leftrightarrow f(i,j)\in A$ and $f(i,j)\in W_j\Leftrightarrow f(i,j)\in B$.

Suppose $W_i\cap W_j=\emptyset$.
\begin{enumerate}[(1)]
  \item If $f(i,j)\in W_i$, then $W_{t_1(i,j)}=A$ and $W_{t_2(i,j)}=B\cup  \{f(i,j)\}$. By condition (II), we have $f(i,j)\in A$.
  \item If $f(i,j)\in W_j$, then $W_{t_1(i,j)}=A\cup  \{f(i,j)\}$ and $W_{t_2(i,j)}=B$. By condition (III), we have $f(i,j)\in B$.
  \item If $f(i,j)\notin W_i\cup W_j$, then $W_{t_1(i,j)}=A$ and $W_{t_2(i,j)}=B$. By condition (I), we have $f(i,j)\notin A\cup B$.
\end{enumerate}
Thus, $f(i,j)\in W_i\Leftrightarrow f(i,j)\in A$ and $f(i,j)\in W_j\Leftrightarrow f(i,j)\in B$.
\end{proof}

Theorem \ref{thm 2.4} is a corollary of Theorem \ref{SDRT}, the strong double recursion theorem (see pp.107-108 in \cite{RM}).

\begin{theorem}[Theorem 2.4 in p.108, \cite{RM}]\label{thm 2.4}
For any two RE relations $M_1(x,y,z)$ and $M_2(x,y,z)$ and any recursive function $g(x,y)$, there are recursive functions $f_1(y)$ and $f_2(y)$ such that for any $y$,
\begin{enumerate}[(1)]
  \item $W_{f_1(y)}=\{x: M_1(x,y, g(f_1(y), f_2(y)))\}$;
  \item $W_{f_2(y)}=\{x: M_2(x,y, g(f_1(y), f_2(y)))\}$.
\end{enumerate}
\end{theorem}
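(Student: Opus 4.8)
The plan is to derive Theorem \ref{thm 2.4} directly from the strong double recursion theorem (Theorem \ref{SDRT}) by absorbing the recursive function $g$ into the two RE relations and then diagonalizing to collapse the two index parameters $y_1,y_2$ into a single parameter $y$. This mirrors the way Lemma \ref{coro of DRT} was obtained from Theorem \ref{SDRT}: we massage the given data into the exact five-place shape that Theorem \ref{SDRT} consumes.

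First I would define auxiliary five-place RE relations tailored to the input format of Theorem \ref{SDRT}. Concretely, set
\[R_1(x,y_1,y_2,z_1,z_2): M_1(x,y_1,g(z_1,z_2))\]
and
\[R_2(x,y_1,y_2,z_1,z_2): M_2(x,y_1,g(z_1,z_2)).\]
Since $g$ is recursive and $M_1,M_2$ are RE, substituting the total recursive value $g(z_1,z_2)$ into the third slot of an RE relation again yields an RE relation, so $R_1$ and $R_2$ are legitimate inputs for Theorem \ref{SDRT}. The parameter $y_2$ is simply ignored here; it is carried along only to match the arity demanded by the statement of Theorem \ref{SDRT}.

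Next I would apply Theorem \ref{SDRT} to $R_1$ and $R_2$ to obtain recursive functions $t_1(y_1,y_2)$ and $t_2(y_1,y_2)$ satisfying, for all $i$ and $j$,
\[W_{t_1(i,j)}=\{x: M_1(x,i,g(t_1(i,j),t_2(i,j)))\}\]
and
\[W_{t_2(i,j)}=\{x: M_2(x,i,g(t_1(i,j),t_2(i,j)))\}.\]
The final step is the diagonalization: define $f_1(y)=t_1(y,y)$ and $f_2(y)=t_2(y,y)$, which are recursive as compositions of recursive functions. Setting $i=j=y$ in the two displays above immediately yields $W_{f_1(y)}=\{x: M_1(x,y,g(f_1(y),f_2(y)))\}$ and $W_{f_2(y)}=\{x: M_2(x,y,g(f_1(y),f_2(y)))\}$, which is exactly the desired conclusion.

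The argument is essentially mechanical, so there is no deep obstacle; the only point requiring care is the bookkeeping of variable positions—ensuring that the self-referential slots $z_1,z_2$ of Theorem \ref{SDRT} are the ones fed into $g$, so that after the recursion they become $t_1(i,j),t_2(i,j)$. The genuinely double (rather than single) self-reference of Theorem \ref{SDRT} is what makes the proof go through, since $W_{f_1(y)}$ refers to $f_2(y)$ and $W_{f_2(y)}$ refers to $f_1(y)$ through the common argument $g(f_1(y),f_2(y))$; an ordinary recursion theorem applied to one index at a time would not close the mutual dependence. Thus the substantive content is entirely delegated to Theorem \ref{SDRT}, and the dummy variable $y_2$ together with the diagonal substitution $y_1=y_2=y$ resolves the remaining arity mismatch.
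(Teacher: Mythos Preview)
Your proposal is correct and follows exactly the approach the paper indicates: the paper does not give its own proof of Theorem~\ref{thm 2.4} but simply remarks that it is a corollary of Theorem~\ref{SDRT} (the strong double recursion theorem), and your derivation---absorb $g$ into the five-place relations, apply Theorem~\ref{SDRT}, then diagonalize $y_1=y_2=y$---is the natural way to read that remark. A minor simplification: since $y_2$ is a dummy, you could equally well set $f_1(y)=t_1(y,0)$ and $f_2(y)=t_2(y,0)$ without diagonalizing, but this changes nothing of substance.
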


Now, we directly prove that ${\sf WDCP}$ implies ${\sf DU}$. The main tool we use is
Theorem \ref{thm 2.4}, a corollary of the strong double recursion theorem.

\begin{theorem}\label{WDCP IP DU}
If $(C,D)$ is  ${\sf WDCP}$, then $(C,D)$ is ${\sf DU}$.
\end{theorem}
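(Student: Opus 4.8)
The plan is to produce, for an arbitrary disjoint pair $(A,B)$ of RE sets, a recursive reduction function $h$ witnessing that $(A,B)$ is reducible to $(C,D)$; since $(A,B)$ is arbitrary, this is exactly what is needed to conclude that $(C,D)$ is $\sf DU$. Let $f$ denote the $\sf WDCP$ function for $(C,D)$. The guiding idea is that the three clauses in the definition of $\sf WDCP$ for $(C,D)$ let us steer the value $f(i,j)$ into $C$, into $D$, or outside $C\cup D$, according to whether $W_i$ is the singleton $\{f(i,j)\}$ with $W_j$ empty (clause (iii), giving membership in $C$), whether $W_j$ is that singleton with $W_i$ empty (clause (ii), giving membership in $D$), or whether both are empty (clause (i), giving non-membership in $C\cup D$). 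So for each input $y$ I want to manufacture two indices whose $W$-sets are empty unless $y$ lands in $A$ or $B$, in which case exactly one of them becomes the singleton consisting of the relevant $f$-value.

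The obstacle is the evident circularity: the element I want to place into (or keep out of) the two $W$-sets is $f$ applied to the very indices of those sets, so the sets cannot be described before the indices are known. This is exactly what Theorem \ref{thm 2.4} dissolves. Concretely, I would take $g$ in Theorem \ref{thm 2.4} to be $f$ and define the two RE relations $M_1(x,y,z)\colon x=z\wedge y\in A$ and $M_2(x,y,z)\colon x=z\wedge y\in B$ (these are RE since $A,B$ are RE and equality is decidable). Theorem \ref{thm 2.4} then supplies recursive functions $f_1,f_2$ with $W_{f_1(y)}=\{x: M_1(x,y,f(f_1(y),f_2(y)))\}$ and $W_{f_2(y)}=\{x: M_2(x,y,f(f_1(y),f_2(y)))\}$. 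Writing $h(y)=f(f_1(y),f_2(y))$, which is recursive, these equations unwind to $W_{f_1(y)}=\{h(y)\}$ if $y\in A$ and $W_{f_1(y)}=\emptyset$ otherwise, and symmetrically $W_{f_2(y)}=\{h(y)\}$ if $y\in B$ and $W_{f_2(y)}=\emptyset$ otherwise.

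Now I carry out the case analysis, using disjointness of $(A,B)$ so that at most one of the two sets is nonempty. If $y\in A$, then $W_{f_1(y)}=\{h(y)\}$ and $W_{f_2(y)}=\emptyset$, so clause (iii) of $\sf WDCP$ gives $h(y)\in C$; if $y\in B$, then $W_{f_1(y)}=\emptyset$ and $W_{f_2(y)}=\{h(y)\}$, so clause (ii) gives $h(y)\in D$; and if $y\notin A\cup B$ both sets are empty, so clause (i) gives $h(y)\notin C\cup D$. This establishes the forward implications $y\in A\Rightarrow h(y)\in C$ and $y\in B\Rightarrow h(y)\in D$. The converses follow from the disjointness of $(C,D)$: if $h(y)\in C$ then $y$ cannot lie in $B$ (else $h(y)\in D$, contradicting $C\cap D=\emptyset$) nor outside $A\cup B$ (else $h(y)\notin C\cup D$), so $y\in A$; symmetrically $h(y)\in D\Rightarrow y\in B$. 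Hence $h$ reduces $(A,B)$ to $(C,D)$, and since $(A,B)$ was an arbitrary disjoint RE pair, $(C,D)$ is $\sf DU$. The one point demanding care is the bookkeeping in applying Theorem \ref{thm 2.4}, namely feeding $f$ in as the function $g$ and checking that the resulting self-referential sets really collapse to the intended empty or singleton values.
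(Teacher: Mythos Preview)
Your proof is correct and is essentially identical to the paper's own argument: the paper also applies Theorem \ref{thm 2.4} with $M_1(x,y,z)\colon y\in A\wedge x=z$ and $M_2(x,y,z)\colon y\in B\wedge x=z$, defines $h(y)=g(f_1(y),f_2(y))$ (where $g$ is the ${\sf WDCP}$ function), and carries out the same three-case analysis. Your write-up is slightly more explicit in spelling out why the reverse implications hold, but the method is the same.
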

\begin{proof}\label{}
Suppose $(C,D)$ is  ${\sf WDCP}$ via the recursive function $g$:
\begin{enumerate}[(I)]
  \item if $W_i=\{g(i,j)\}$ and $W_j=\emptyset$, then $g(i,j)\in C$;
  \item if $W_i=\emptyset$ and $W_j=\{g(i,j)\}$, then $g(i,j)\in D$;
  \item  If $W_i=W_j=\emptyset$, then $f(i,j)\notin C\cup D$.
\end{enumerate}

We show that $(C,D)$ is ${\sf DU}$: any disjoint pair of RE sets is reducible to $(C,D)$.  Let $(A,B)$ be a disjoint pair of RE sets.
Define \[M_1(x,y,z): y\in A\wedge x=z\]
 and  \[M_2(x,y,z): y\in B\wedge x=z.\]
Apply Theorem \ref{thm 2.4} to  $M_1(x,y,z)$ and $M_2(x,y,z)$. Then there are recursive functions $f_1(y)$ and $f_2(y)$ such that for any $y$:
\[W_{f_1(y)}=\{x: y\in A\wedge x=g(f_1(y), f_2(y))\}\]
 and  \[W_{f_2(y)}=\{x: y\in B\wedge x=g(f_1(y), f_2(y))\}.\]
Define $h(y)=g(f_1(y), f_2(y))$.

We show that $(A,B)$ is reducible to $(C,D)$ via the recursive function $h$.
\begin{enumerate}[(1)]
  \item Suppose $y\in A$. Then $W_{f_1(y)}=\{g(f_1(y), f_2(y))\}$ and $W_{f_2(y)}=\emptyset$. Thus, by condition (I), $h(y)\in C$.
  \item Suppose $y\in B$. Then $W_{f_1(y)}=\emptyset$ and $W_{f_2(y)}=\{g(f_1(y), f_2(y))\}$. Thus, by condition (II), $h(y)\in D$.
  \item Suppose $y\notin A\cup B$. Then $W_{f_1(y)}=W_{f_2(y)} =\emptyset$. Thus, by condition (III), $h(y)\notin C\cup D$.
\end{enumerate}
Thus, $y\in A\Leftrightarrow h(y)\in C$ and $y\in B\Leftrightarrow h(y)\in D$.
\end{proof}

Now, we give a much simper and more efficient proof of Theorem \ref{equiv of EI} via
the strong double recursion theorem.

\begin{theorem}\label{big thm}
Theorem \ref{equiv of EI} can be proved via the strong double recursion theorem as in the following picture:

\begin{center}

\xymatrix{
&&\sf semi\mbox{-}DG\ar@{=>}"2,1"\\
\sf KP\ar@{=>}"2,2"&\sf CEI\ar@{=>}"2,3"&\sf EI\ar@{=>}"2,4"&\sf WEI\ar@{=>}"2,5"&\sf DG\ar@{=>}"1,3"\ar@{=>}"3,5"\\
\sf SF\ar@{=>}"2,1"&\sf semi\mbox{-}DU\ar@{=>}"3,1"&\sf DU\ar@{=>}"3,2"&\sf WDCP\ar@{=>}"3,3"&\sf DCP\ar@{=>}"3,4"}

\end{center}

\end{theorem}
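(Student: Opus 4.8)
The plan is to read Theorem \ref{big thm} as the single claim that every arrow in the displayed diagram is a valid implication. Once all twelve arrows are in place, I would observe that the diagram contains the directed cycle
\begin{gather*}
{\sf EI}\Rightarrow{\sf WEI}\Rightarrow{\sf DG}\Rightarrow{\sf DCP}\Rightarrow{\sf WDCP}\Rightarrow{\sf DU}\\
\Rightarrow\text{semi-}{\sf DU}\Rightarrow{\sf SF}\Rightarrow{\sf KP}\Rightarrow{\sf CEI}\Rightarrow{\sf EI},
\end{gather*}
into which semi-${\sf DG}$ is spliced by the two further arrows ${\sf DG}\Rightarrow\text{semi-}{\sf DG}\Rightarrow{\sf KP}$. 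Since this cycle passes through all eleven notions and ${\sf EI}$ lies on it, every notion of Definition \ref{def of EI} is equivalent to ${\sf EI}$, which is exactly Theorem \ref{equiv of EI}. Thus the whole proof reduces to justifying the arrows, and the gain over Smullyan's diagram is that the analytically heavy step is replaced by the two clean strong-double-recursion arguments already established.

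Next I would dispose of the arrows that hold directly from Definition \ref{def of EI} by a one-line check each: ${\sf CEI}\Rightarrow{\sf EI}$ (when $W_i\cap W_j=\emptyset$ the biconditional of ${\sf CEI}$ forces $f(i,j)$ into neither set), ${\sf EI}\Rightarrow{\sf WEI}$ (the ${\sf EI}$ witness already meets clauses (i)--(iii) of ${\sf WEI}$, by splitting into the cases of membership of $f(i,j)$ in $A$ or $B$), ${\sf DG}\Rightarrow\text{semi-}{\sf DG}$ (retain one direction of each biconditional), ${\sf DG}\Rightarrow{\sf DCP}$ and ${\sf DCP}\Rightarrow{\sf WDCP}$ (immediate unfolding of the membership conditions), and ${\sf DU}\Rightarrow\text{semi-}{\sf DU}$ (reducibility refines semi-reducibility). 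The two substantive arrows ${\sf WEI}\Rightarrow{\sf DG}$ and ${\sf WDCP}\Rightarrow{\sf DU}$ are precisely Theorem \ref{WEI IP DG} and Theorem \ref{WDCP IP DU}, where the strong double recursion theorem does the real work, so I would simply invoke them.

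Four arrows then remain. For ${\sf KP}\Rightarrow{\sf CEI}$ I would take the Kleene function itself as the ${\sf CEI}$ witness: if $A\subseteq W_i$, $B\subseteq W_j$ and $f(i,j)$ lay in exactly one of $W_i,W_j$, then either $f(i,j)\in W_j-W_i$ forces $f(i,j)\in A\subseteq W_i$ or $f(i,j)\in W_i-W_j$ forces $f(i,j)\in B\subseteq W_j$, a contradiction, whence $f(i,j)\in W_i\Leftrightarrow f(i,j)\in W_j$. For $\text{semi-}{\sf DG}\Rightarrow{\sf KP}$ I would pass through the reduction principle: given indices $x,y$, use the s-m-n theorem to produce indices $p(x,y),q(x,y)$ of a disjoint splitting of $W_y,W_x$ with $W_y-W_x\subseteq W_{p(x,y)}$ and $W_x-W_y\subseteq W_{q(x,y)}$, then set $f(x,y)=g(p(x,y),q(x,y))$ for the semi-${\sf DG}$ witness $g$; the disjointness clause yields the two Kleene conditions. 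The last two arrows, $\text{semi-}{\sf DU}\Rightarrow{\sf SF}$ and ${\sf SF}\Rightarrow{\sf KP}$, I would either import the analogous facts from Smullyan or re-derive them by the same recursion-theoretic bookkeeping, building the separation function from a uniform semi-reduction of the reduction-principle splitting of the two input relations, and then extracting a Kleene function by a fixed-point choice of the separation parameter.

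The genuinely delicate content has already been isolated in Theorem \ref{WEI IP DG} and Theorem \ref{WDCP IP DU}; the main remaining obstacle is the pair $\text{semi-}{\sf DU}\Rightarrow{\sf SF}$ and ${\sf SF}\Rightarrow{\sf KP}$. Here one must convert the non-uniform, merely existential reducibility guaranteed by semi-${\sf DU}$ into the single uniform separation function $S(x,y,z)$ demanded by ${\sf SF}$, and then solve the fixed-point equation $m=S(h,x,y)$ whose parameter $h$ codes relations that refer to $m$ itself. I expect this uniformization, together with the appeal to the reduction principle and a fixed-point theorem, to be where the care is concentrated; everything else is either definitional or a citation of the two strong-double-recursion theorems, after which closing the cycle delivers Theorem \ref{equiv of EI}.
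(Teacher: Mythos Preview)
Your proposal is correct and follows essentially the same route as the paper: you identify the twelve arrows, dispose of the six definitional ones, invoke Theorems~\ref{WEI IP DG} and~\ref{WDCP IP DU} for the two strong-double-recursion steps, and handle the remaining four by direct verification or by citing Smullyan, then close the cycle. The only minor divergence is that for ${\sf SF}\Rightarrow{\sf KP}$ the paper does not use a fixed-point argument but instead passes through ${\sf DU}$ (via Smullyan's ${\sf SF}\Rightarrow{\sf DU}$), notes that a concrete Kleene pair exists, and applies Proposition~\ref{reduction prop}(1) to transfer the Kleene property along the reduction; your ``import from Smullyan'' option covers this, so the difference is cosmetic.
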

\begin{proof}\label{}
\begin{enumerate}[(1)]
  \item ${\sf KP}\Rightarrow {\sf CEI}$: Proposition 2 in p.68, \cite{RM}.
  \item ${\sf CEI} \Rightarrow {\sf EI}$: by definition.
  \item ${\sf EI}\Rightarrow {\sf WEI}$: by definition.
  \item ${\sf WEI}\Rightarrow {\sf DG}$: Theorem \ref{WEI IP DG}.
  \item ${\sf DG}\Rightarrow {\sf DCP}$: by definition (easy to check).
  \item ${\sf DCP}\Rightarrow {\sf WDCP}$: by definition.
  \item  ${\sf WDCP}\Rightarrow {\sf DU}$: Theorem \ref{WDCP IP DU}.
  \item ${\sf DU}\Rightarrow$ semi-${\sf DU}$: by definition.
  \item semi-${\sf DU}\Rightarrow {\sf SF}$: Proposition 1 in p.90, \cite{RM}.
  \item ${\sf SF}\Rightarrow {\sf KP}$:
Suppose $(A,B)$ has a separation function. By Theorem 2 of \cite{RM} in p.91, $(A,B)$ is ${\sf DU}$ (i.e., any disjoint pair of RE sets is reducible to $(A,B)$). By Theorem 1 of \cite{RM} in p.68, there exists a Kleene pair $(K_1, K_2)$. Since $(K_1, K_2)$ is reducible to $(A,B)$, by Proposition \ref{reduction prop}, $(A,B)$ is a Kleene pair.
  \item ${\sf DG}\Rightarrow$ semi-${\sf DG}$: by definition.
  \item semi-${\sf DG}\Rightarrow {\sf KP}$: Theorem 13 in p.79, \cite{RM}.
\end{enumerate}

\end{proof}

Our proof of Theorem \ref{big thm} via the strong double recursion theorem is much
simpler than the corresponding proof in \cite{RM}.
Among notions in Definition \ref{def of EI}, five key notions are ${\sf DG}, {\sf WEI}, {\sf DU}, {\sf CEI}, {\sf EI}$. Smullyan  has shown in \cite{RM} that these notions are equivalent in a complex way. In the rest of this section, we prove the equivalence of these notions via a much simpler way (Theorem \ref{simple proof}) than those proofs in \cite{RM} using the strong double recursion theorem.

\begin{theorem}[Theorem 4, p.57, \cite{RM}]\label{special fun}
There is a recursive function $\sigma(x,y)$ such that for all $i$ and $j$,
\begin{enumerate}[(1)]
  \item $W_{\sigma(i,j)}$ and $W_{\sigma(j,i)}$ are disjoint supersets of $W_i- W_j$ and $W_j- W_i$ respectively;
  \item If $W_i$ and $W_j$ are disjoint, then $W_i=W_{\sigma(i,j)}$ and $W_j=W_{\sigma(j,i)}$.
\end{enumerate}
\end{theorem}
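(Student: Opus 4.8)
The plan is to give an explicit ``who halts first'' construction of $\sigma$, exactly in the spirit of the pair $(A,B)$ in the Example above but now made uniform in the two indices, and then to verify the two clauses directly. Recalling the Kleene predicate $T_1(z,x,y)$ from the notation, I would consider the ternary relation
\[
R(i,j,x)\;:\equiv\;\exists p\,[\,T_1(i,x,p)\wedge \forall q\leq p\;\neg T_1(j,x,q)\,],
\]
which is RE uniformly in $i,j,x$. By the s-m-n theorem there is then a total recursive function $\sigma(x,y)$ with $W_{\sigma(i,j)}=\{x: R(i,j,x)\}$ for all $i,j$. Intuitively $x\in W_{\sigma(i,j)}$ says that $x$ enters the enumeration of $W_i$ strictly earlier (in witness order) than it enters $W_j$. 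The choice of non-strict $\leq$ in the inner clause is deliberate: it discards ties, which is precisely what makes the construction behave correctly for all $i,j$, including $i=j$.

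Two inclusions come for free. First, $W_{\sigma(i,j)}\subseteq W_i$, since membership already requires $T_1(i,x,p)$ for some $p$. Second, $W_i-W_j\subseteq W_{\sigma(i,j)}$: if $x\in W_i$ has least witness $p_0$ and $x\notin W_j$, then no $q$ satisfies $T_1(j,x,q)$, so $R(i,j,x)$ holds via $p=p_0$. Swapping the roles of $i$ and $j$ gives the analogous statements for $W_{\sigma(j,i)}$, which supplies the ``superset'' half of clause (1).

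For disjointness, I would argue by contradiction: suppose $x\in W_{\sigma(i,j)}\cap W_{\sigma(j,i)}$. Then there are $p,p'$ with $T_1(i,x,p)$ and no $j$-witness $\leq p$, and with $T_1(j,x,p')$ and no $i$-witness $\leq p'$. Since $T_1(j,x,p')$ holds yet there is no $j$-witness $\leq p$, we get $p'>p$; symmetrically $T_1(i,x,p)$ together with the absence of any $i$-witness $\leq p'$ forces $p>p'$, a contradiction. Hence $W_{\sigma(i,j)}$ and $W_{\sigma(j,i)}$ are disjoint, completing clause (1). I expect this witness-comparison step to be the only real content of the proof. The $\leq$ (rather than $<$) is exactly what lets it go through uniformly: in particular it gives $W_{\sigma(i,i)}=\emptyset$, since taking $q=p$ already refutes the inner condition, so that even the degenerate case $i=j$ yields two disjoint (empty) sets.

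Finally, clause (2) is immediate from the two inclusions. If $W_i\cap W_j=\emptyset$ then $W_i-W_j=W_i$, so $W_i\subseteq W_{\sigma(i,j)}\subseteq W_i$ forces $W_{\sigma(i,j)}=W_i$, and symmetrically $W_{\sigma(j,i)}=W_j$. The main obstacle, such as it is, is simply getting the tie-breaking right; once $R$ is written with $\leq$, the remaining verifications are routine.
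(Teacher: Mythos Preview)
Your proof is correct. The paper does not itself prove this theorem --- it is quoted from \cite{RM} as a black box and used only as a tool in the argument that ${\sf DG}$ implies ${\sf WEI}$ --- and your ``who halts first'' construction via the Kleene predicate is precisely the standard argument found there (indeed, it is the uniform-in-indices version of the first example of an $\sf EI$ pair listed earlier in the paper).
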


We directly prove that ${\sf DG}$ implies ${\sf WEI}$. The main tool we use is Theorem \ref{special fun}.

\begin{proposition}\label{DG IMP WEI}
If $(C,D)$ is  ${\sf DG}$, then it is ${\sf WEI}$.
\end{proposition}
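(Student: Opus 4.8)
The plan is to avoid the strong double recursion theorem here and instead exploit the disjointification function $\sigma$ from Theorem \ref{special fun} together with a \emph{crossing} of the two indices. Suppose $(C,D)$ is $\sf DG$ via the recursive function $g$. The guiding observation is that in the $\sf WEI$ conclusions the membership $f(i,j)\in C$ should be triggered by $W_j$ acquiring the extra point (clause (ii)), while $f(i,j)\in D$ should be triggered by $W_i$ acquiring the extra point (clause (iii)); this suggests feeding the two arguments to $g$ in the \emph{reverse} order. Concretely, I would set
\[ f(i,j)=g(\sigma(j,i),\sigma(i,j)). \]
This is recursive. Since $W_{\sigma(i,j)}$ and $W_{\sigma(j,i)}$ are always disjoint by Theorem \ref{special fun}(1), the $\sf DG$ hypothesis is met for every $i,j$, so
\[ f(i,j)\in C \Leftrightarrow f(i,j)\in W_{\sigma(j,i)} \quad\text{and}\quad f(i,j)\in D \Leftrightarrow f(i,j)\in W_{\sigma(i,j)}. \]

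Then I would verify the three $\sf WEI$ clauses. For clause (i), $W_i=C$ and $W_j=D$ are disjoint, so Theorem \ref{special fun}(2) gives $W_{\sigma(i,j)}=C$ and $W_{\sigma(j,i)}=D$; the two equivalences above then collapse to $f(i,j)\in C\Leftrightarrow f(i,j)\in D$, which cannot hold for both since $C\cap D=\emptyset$, forcing $f(i,j)\notin C\cup D$. For clause (ii), where $W_i=C$ and $W_j=D\cup\{f(i,j)\}$, I would argue by contradiction: if $f(i,j)\notin C$ then $W_i\cap W_j=\emptyset$, so $\sigma$ returns the exact sets $W_{\sigma(j,i)}=D\cup\{f(i,j)\}$ and $W_{\sigma(i,j)}=C$; but then $f(i,j)\in W_{\sigma(j,i)}$ trivially, and the first equivalence yields $f(i,j)\in C$, a contradiction. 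Hence $f(i,j)\in C$. Clause (iii) is symmetric, swapping the roles of $i,j$ and of $C,D$.

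The main subtlety, and the reason the crossing is needed, is that feeding $g$ the sets in the \emph{natural} order $(\sigma(i,j),\sigma(j,i))$ makes the $\sf DG$ equivalences degenerate into tautologies in clause (i) (one gets $f(i,j)\in C\Leftrightarrow f(i,j)\in C$), yielding no information. The second delicate point is that disjointness of $W_i$ and $W_j$ is not available a priori in clauses (ii) and (iii)---it depends on where $f(i,j)$ lands---so the verification must be run as a contradiction argument in which the very assumption $f(i,j)\notin C$ (respectively $f(i,j)\notin D$) is what secures disjointness and thereby lets Theorem \ref{special fun}(2) pin down the sets exactly.
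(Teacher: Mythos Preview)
Your argument is correct and coincides with the paper's own proof: the paper also defines the witnessing function as $k(\sigma(j,i),\sigma(i,j))$ (with $k$ the ${\sf DG}$ function), verifies clause (i) by using that disjointness of $W_i,W_j$ forces $W_{\sigma(i,j)}=W_i$ and $W_{\sigma(j,i)}=W_j$, and handles clause (ii) by the same contradiction argument you describe. Your write-up is in fact slightly more explicit than the paper's in noting that $W_{\sigma(i,j)}$ and $W_{\sigma(j,i)}$ are \emph{always} disjoint, so the ${\sf DG}$ equivalences are available unconditionally.
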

\begin{proof}\label{}
The idea of this proof is essentially from \cite{RM}. Suppose $(C,D)$ is  ${\sf DG}$ via the recursive function $k(x,y)$. Define $g(i,j)=k(\sigma(j,i), \sigma(i,j))$. We show that $(C,D)$ is  ${\sf WEI}$ via $g$.
\begin{enumerate}[(1)]
  \item Suppose $W_i=C$ and $W_j=D$. We show that $g(i,j)\notin C\cup D$.

Since $W_i\cap W_j=\emptyset$, by Theorem \ref{special fun}, $W_i=W_{\sigma(i,j)}$ and $W_j=W_{\sigma(j,i)}$. Since $(C,D)$ is  ${\sf DG}$ via the recursive function $k$, we have $g(i,j)\in C\Leftrightarrow g(i,j)\in W_j$ and
$g(i,j)\in D\Leftrightarrow g(i,j)\in W_i$.
Thus, $g(i,j)\notin C\cup D$.
  \item Suppose $W_i=C$ and $W_j=D\cup \{g(i,j)\}$. We show that $g(i,j)\in C$.

Suppose $g(i,j)\notin C$. Then $W_i\cap W_j=\emptyset$. Then $g(i,j)\in C\Leftrightarrow g(i,j)\in W_j$. So $g(i,j)\notin W_j$, which leads to a contradiction.

  \item Suppose $W_i=C\cup \{g(i,j)\}$ and $W_j=D$. By a symmetric argument as (2), we can show that $g(i,j)\in D$.
\end{enumerate}

Thus, we have shown that $(C,D)$ is  ${\sf WEI}$ via the recursive function $g$.
\end{proof}

The main tool we use in the proof of Lemma \ref{coro2 of DRT} is Theorem \ref{thm 2.4}, a corollary of the strong double recursion theorem.

\begin{lemma}\label{coro2 of DRT}
For any disjoint pair of RE sets $A$ and $B$, for any RE sets $C$ and $D$, and any recursive function $g(x,y)$, there are recursive functions $f_1(y)$ and $f_2(y)$ such that for any $y$,
\begin{enumerate}[(I)]
\item if $y\in A$, then $W_{f_1(y)}=C$ and $W_{f_2(y)}=D\cup \{g(f_1(y), f_2(y))\}$;
\item  if $y\in B$, then $W_{f_1(y)}=C\cup \{g(f_1(y), f_2(y))\}$ and $W_{f_2(y)}=D$;
\item if $y\notin A\cup B$, then $W_{f_1(y)}=C$ and $W_{f_2(y)}=D$.
\end{enumerate}
\end{lemma}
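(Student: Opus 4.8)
The plan is to reduce the lemma directly to Theorem \ref{thm 2.4} by choosing the two RE relations so that the required behavior falls out automatically. The key observation is that in all three cases $W_{f_1(y)}$ should be $C$, with the extra point $g(f_1(y),f_2(y))$ adjoined precisely when $y\in B$, while $W_{f_2(y)}$ should be $D$, with that same point adjoined precisely when $y\in A$. This suggests defining
\[M_1(x,y,z): x\in C\vee(y\in B\wedge x=z)\]
and
\[M_2(x,y,z): x\in D\vee(y\in A\wedge x=z).\]
Since $C$, $D$, $A$, $B$ are RE and $x=z$ is recursive, both $M_1$ and $M_2$ are RE relations, so Theorem \ref{thm 2.4} applies to $M_1$, $M_2$ and the given recursive function $g$.

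Applying Theorem \ref{thm 2.4} yields recursive functions $f_1$ and $f_2$ with $W_{f_1(y)}=\{x: M_1(x,y,g(f_1(y),f_2(y)))\}$ and $W_{f_2(y)}=\{x: M_2(x,y,g(f_1(y),f_2(y)))\}$. Unwinding the definitions of $M_1$ and $M_2$, I would obtain
\[W_{f_1(y)}=C\cup\{x: y\in B\wedge x=g(f_1(y),f_2(y))\},\]
which equals $C\cup\{g(f_1(y),f_2(y))\}$ when $y\in B$ and equals $C$ otherwise, and symmetrically
\[W_{f_2(y)}=D\cup\{x: y\in A\wedge x=g(f_1(y),f_2(y))\},\]
which equals $D\cup\{g(f_1(y),f_2(y))\}$ when $y\in A$ and equals $D$ otherwise.

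The final step is a three-way case split in which the disjointness of $A$ and $B$ is exactly what keeps each case clean. If $y\in A$, then $y\notin B$, so $W_{f_1(y)}=C$ and $W_{f_2(y)}=D\cup\{g(f_1(y),f_2(y))\}$, giving (I); if $y\in B$, then $y\notin A$, and a symmetric reading gives (II); and if $y\notin A\cup B$, neither extra point is present, giving (III). I expect no serious obstacle: the only real subtlety is arranging $M_1$ and $M_2$ so that the self-referential value $g(f_1(y),f_2(y))$ is inserted into the correct set under the correct membership condition, after which disjointness of $A$ and $B$ does the rest.
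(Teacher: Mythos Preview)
Your proposal is correct and follows essentially the same approach as the paper: the paper defines the identical relations $M_1(x,y,z): x\in C\vee (y\in B\wedge x=z)$ and $M_2(x,y,z): x\in D\vee (y\in A\wedge x=z)$, applies Theorem~\ref{thm 2.4}, and then simply remarks that (I)--(III) are easy to check. If anything, your case split spelling out how disjointness of $A$ and $B$ is used is more explicit than the paper's.
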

\begin{proof}\label{}
Define \[M_1(x,y,z): x\in C\vee (y\in B\wedge x=z)\] and \[M_2(x,y,z): x\in D\vee (y\in A\wedge x=z).\]
Apply Theorem \ref{thm 2.4} to $M_1(x,y,z)$ and $M_2(x,y,z)$. Then there are recursive functions $f_1(y)$ and $f_2(y)$ such that for any $y$:
\begin{enumerate}[(1)]
  \item $W_{f_1(y)}=\{x: M_1(x,y, g(f_1(y), f_2(y)))\}=\{x: x\in C\vee (y\in B\wedge x=g(f_1(y), f_2(y)))$;
  \item $W_{f_2(y)}=\{x: M_2(x,y, g(f_1(y), f_2(y)))\}=\{x: x\in D\vee (y\in A\wedge x=g(f_1(y), f_2(y)))\}$.
\end{enumerate}
It is easy to check that (I)-(III) hold.
\end{proof}

It is an exercise in \cite{RM} (p. 126) that ${\sf WEI}$ implies ${\sf DU}$. We give proof details of it here. The main tool we use is Lemma \ref{coro2 of DRT} which follows from the strong double recursion theorem.

\begin{theorem}[Exercise 1 in p. 126, \cite{RM}]\label{WEI IP DU}
If $(C,D)$ is  ${\sf WEI}$, then it is ${\sf DU}$.
\end{theorem}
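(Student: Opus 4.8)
The plan is to feed the WEI witness straight into Lemma \ref{coro2 of DRT}, which has already packaged the strong double recursion theorem into exactly the shape we need. Let $g(x,y)$ be the recursive function witnessing that $(C,D)$ is ${\sf WEI}$, so that conditions (I)--(III) of the definition of ${\sf WEI}$ hold. To show $(C,D)$ is ${\sf DU}$, I would fix an arbitrary disjoint pair $(A,B)$ of RE sets and produce a recursive reduction $h$ of $(A,B)$ to $(C,D)$, i.e.\ a recursive $h$ with $y\in A\Leftrightarrow h(y)\in C$ and $y\in B\Leftrightarrow h(y)\in D$.

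First I would apply Lemma \ref{coro2 of DRT} to the disjoint pair $A,B$, the RE sets $C,D$, and the very function $g$ witnessing ${\sf WEI}$. This yields recursive functions $f_1(y)$ and $f_2(y)$ satisfying clauses (I)--(III) of that lemma. I would then set $h(y)=g(f_1(y),f_2(y))$, which is recursive as a composition of recursive functions.

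The verification is a trichotomy on $y$, matching each clause of the lemma against the correspondingly-numbered clause of ${\sf WEI}$. If $y\in A$, clause (I) of the lemma gives $W_{f_1(y)}=C$ and $W_{f_2(y)}=D\cup\{h(y)\}$; applying ${\sf WEI}$ condition (II) with $i=f_1(y),\ j=f_2(y)$ yields $h(y)\in C$. If $y\in B$, clause (II) gives $W_{f_1(y)}=C\cup\{h(y)\}$ and $W_{f_2(y)}=D$, and ${\sf WEI}$ condition (III) yields $h(y)\in D$. If $y\notin A\cup B$, clause (III) gives $W_{f_1(y)}=C$ and $W_{f_2(y)}=D$, and ${\sf WEI}$ condition (I) yields $h(y)\notin C\cup D$. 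To extract the biconditionals from these three implications I would use that $C$ and $D$ are disjoint: if $h(y)\in C$ then the $y\in B$ and $y\notin A\cup B$ cases are excluded (they force $h(y)\in D$ or $h(y)\notin C\cup D$), so $y\in A$, giving $y\in A\Leftrightarrow h(y)\in C$; symmetrically $y\in B\Leftrightarrow h(y)\in D$.

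There is essentially no hard step here: all the recursion-theoretic difficulty has been absorbed into Lemma \ref{coro2 of DRT}, whose role is precisely to build indices $f_1(y),f_2(y)$ whose enumerated sets reference the as-yet-undefined diagonal value $g(f_1(y),f_2(y))$. The only point demanding care is the self-referential bookkeeping already handled by the lemma, together with the observation that the three clauses of the lemma are set up to trigger exactly the three clauses of ${\sf WEI}$; once that alignment is in place, the disjointness of $(C,D)$ upgrades the three one-directional implications into the two biconditionals defining reducibility.
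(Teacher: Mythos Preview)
Your proof is correct and follows essentially the same route as the paper's: apply Lemma~\ref{coro2 of DRT} to $A,B,C,D$ and the ${\sf WEI}$ witness $g$, set $h(y)=g(f_1(y),f_2(y))$, and run the three-case analysis matching the lemma's clauses to the ${\sf WEI}$ conditions. Your explicit remark that disjointness of $C$ and $D$ is what upgrades the three implications to the two biconditionals is a nice clarification the paper leaves implicit.
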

\begin{proof}\label{}
Suppose  $(C,D)$ is  ${\sf WEI}$ via the recursive function $g$. Let $(A,B)$ be a disjoint pair of RE sets. Apply Lemma \ref{coro2 of DRT} to $A,B,C,D$ and $g$. Then there are recursive functions $f_1$ and $f_2$ such that conditions (I)-(III) in Lemma \ref{coro2 of DRT} hold.

We show that $(A,B)$ is reducible to $(C,D)$ via $h(y)=g(f_1(y), f_2(y))$.

\begin{enumerate}[(1)]
  \item  We show that if $y\in A$, then $h(y)\in C$.
Suppose $y\in A$. Then, by (I) in Lemma \ref{coro2 of DRT}, $W_{f_1(y)}=C$ and $W_{f_2(y)}=D\cup \{g(f_1(y), f_2(y))\}$. Since $(C,D)$ is  ${\sf WEI}$ via $g$, we have $h(y)=g(f_1(y), f_2(y))\in C$.

  \item We show that if $y\in B$, then $h(y)\in D$.
Suppose $y\in B$. Then, by (II) in Lemma \ref{coro2 of DRT}, $W_{f_1(y)}=C\cup \{g(f_1(y), f_2(y))\}$ and $W_{f_2(y)}=D$. Since $(C,D)$ is  ${\sf WEI}$ via $g$, we have $h(y)\in D$.
  \item We show that if $y\notin A\cup B$, then $h(y)\notin C\cup D$.
Suppose $y\notin A\cup B$. Then, by (III) in Lemma \ref{coro2 of DRT}, $W_{f_1(y)}=C$ and $W_{f_2(y)}=D$. Since $(C,D)$ is  ${\sf WEI}$ via $g$, we have $h(y)\notin C\cup D$.
\end{enumerate}
\end{proof}

\begin{theorem}\label{simple proof}
The equivalence of $\sf DG, \sf WEI, \sf DU, \sf CEI$ and $\sf EI$ can be proved  as in the following picture:

\begin{center}
\xymatrix@C=2ex{
&&\sf EI\ar@{=>}"2,5"\\
\sf CEI\ar@{=>}"1,3"&&&&\sf DG\ar@{=>}"3,4"\\
&\sf DU\ar@{=>}"2,1"&&\sf WEI\ar@{=>}"3,2"}
\end{center}

\end{theorem}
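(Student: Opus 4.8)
The plan is to verify the five-cycle displayed in the diagram,
\[
\sf CEI \Rightarrow \sf EI \Rightarrow \sf DG \Rightarrow \sf WEI \Rightarrow \sf DU \Rightarrow \sf CEI,
\]
which forces $\sf CEI$, $\sf EI$, $\sf DG$, $\sf WEI$ and $\sf DU$ to collapse into a single equivalence class. Three of the arrows follow immediately from results already established in this section, one more is an easy definitional check, and the only step requiring a genuinely separate argument is the closing arrow $\sf DU \Rightarrow \sf CEI$.

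I would first dispose of the two routine arrows. For $\sf CEI \Rightarrow \sf EI$ I keep the same recursive witness $f$: if $A \subseteq W_i$, $B \subseteq W_j$ and $W_i \cap W_j = \emptyset$, then $\sf CEI$ yields $f(i,j) \in W_i \Leftrightarrow f(i,j) \in W_j$, and since disjointness rules out simultaneous membership in both sets, $f(i,j) \notin W_i \cup W_j$, which is exactly the $\sf EI$ condition. For $\sf EI \Rightarrow \sf DG$ I note that $\sf EI \Rightarrow \sf WEI$ holds directly from the definitions with the same witness, and then invoke Theorem \ref{WEI IP DG} to upgrade $\sf WEI$ to $\sf DG$. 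The next two arrows are quoted verbatim from what has already been proved: $\sf DG \Rightarrow \sf WEI$ is Proposition \ref{DG IMP WEI}, and $\sf WEI \Rightarrow \sf DU$ is Theorem \ref{WEI IP DU}.

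The main obstacle is the final arrow $\sf DU \Rightarrow \sf CEI$, which I would settle via the reduction theorem rather than by a direct construction. The key point is that a $\sf CEI$ pair exists \emph{independently} of the cycle: by Theorem 1 (p.~68) of \cite{RM} there is a Kleene pair $(K_1, K_2)$, and by Proposition 2 (p.~68) of \cite{RM} every Kleene pair is $\sf CEI$, so $(K_1, K_2)$ is $\sf CEI$. If now $(A,B)$ is $\sf DU$, then by definition $(K_1,K_2)$ is reducible to $(A,B)$, and Theorem \ref{reduction thm}(4) transfers $\sf CEI$ from the reduced pair $(K_1,K_2)$ to $(A,B)$. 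This closes the cycle, and the five implications together give the asserted equivalence. I expect this last step to be the only place where one must import an external existence result in order to avoid circularity; every other arrow is either definitional or a citation of a lemma already proved in this section.
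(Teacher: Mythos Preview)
Your proposal is correct and follows essentially the same five-arrow cycle as the paper, citing the same results for $\sf DG\Rightarrow\sf WEI$, $\sf WEI\Rightarrow\sf DU$, $\sf CEI\Rightarrow\sf EI$, and $\sf EI\Rightarrow\sf DG$. The only cosmetic difference is in the closing step $\sf DU\Rightarrow\sf CEI$: the paper first transfers the Kleene-pair property to $(A,B)$ via Proposition~\ref{reduction prop}(1) and then applies $\sf KP\Rightarrow\sf CEI$, whereas you first convert the Kleene pair to a $\sf CEI$ pair and then transfer $\sf CEI$ via Theorem~\ref{reduction thm}(4); both routes use the same external existence result and the same reduction mechanism.
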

\begin{proof}\label{}
\begin{enumerate}[(1)]
  \item $\sf DG\Rightarrow \sf WEI$: Proposition \ref{DG IMP WEI}.
  \item $\sf WEI \Rightarrow \sf DU$: Theorem \ref{WEI IP DU}.
  \item $\sf DU\Rightarrow \sf CEI$: Since there exists a Kleene pair, by Proposition \ref{reduction prop}(1), if $(A,B)$ is $\sf DU$, then $(A,B)$ is a Kleene pair. By Proposition 2 of \cite{RM} in p.68, $(A,B)$ is $\sf CEI$.
   \item $\sf CEI \Rightarrow \sf EI$: By definition.
  \item $\sf EI\Rightarrow \sf DG$: Follows from Theorem \ref{WEI IP DG} since $\sf EI$ implies $\sf WEI$ by definition.
\end{enumerate}
\end{proof}

\begin{remark}
The proof of ``$\sf DG\Rightarrow \sf DU$" in \cite{RM} is complex and does not use any version of recursion theorems. From Theorem \ref{simple proof}, we also give a simpler proof of ``$\sf DG\Rightarrow \sf DU$" via the strong double recursion theorem.
\end{remark}

\section{Some applications in meta-mathematics}

In this section, we discuss some applications of effective inseparability in the meta-mathematics of arithmetic. This section is partly
an exposition of some of the literature, but it also contains original results.
In Section 3.1, we examine some important meta-mathematical properties of theories and the relationship among them. In Section 3.2, we examine
Smory\'{n}ski's theorem about effective inseparability and its application. In Section 3.3, we examine Shoenfield's theorems and their applications to recursively inseparable theories and effectively inseparable theories. In Section 3.4, we show that
there are many ${\sf EI}$ theories weaker than the theory $\mathbf{R}$.

\subsection{Some meta-mathematical properties}

In Definition \ref{varied theories}, we list some important meta-mathematical properties of theories. Then we examine the relationship among these properties. This section is mainly an exposition of some of the literature.

\begin{definition}[Essentially undecidable, creative theory, Rosser theory, Exact Rosser theory]\label{varied theories}
Let $T$ be a consistent RE theory and $(A,B)$ be a disjoint pair of RE sets.
\begin{enumerate}[(1)]
\item We say $T$ is \emph{essentially undecidable} (${\sf EU}$) if any consistent RE extension of $T$ is undecidable.
    \item We say $A\subseteq\mathbb{N}$ is \emph{productive} if there exists a recursive function $f(x)$ (called a productive function for $x$) such that for every number $i$, if $W_i\subseteq A$, then $f(i)\in A- W_i$.
  \item We say $A\subseteq\mathbb{N}$ is \emph{creative} if $A$ is RE and the complement of $A$ is productive.
\item We say $T$ is \emph{creative} if $T_P$ is creative.

\item We say $(A,B)$ is \emph{separable} in $T$ if there is a formula $\phi(x)$ with only one free variable such that if $n\in A$, then $T\vdash \phi(\overline{n})$, and if $n\in B$, then $T\vdash \neg\phi(\overline{n})$.
\item We say $(A,B)$ is \emph{exactly separable} in $T$  if there is a formula $\phi(x)$ with only one free variable such that  $n\in A\Leftrightarrow T\vdash \phi(\overline{n})$, and $n\in B\Leftrightarrow T\vdash \neg\phi(\overline{n})$.
\item We say $T$ is a \emph{Rosser theory} if any disjoint  pair of RE sets is separable in $T$.
    \item We say $T$ is an \emph{exact Rosser theory} if any disjoint  pair of RE sets is exactly separable in $T$.
\end{enumerate}
\end{definition}

\begin{theorem}[Smullyan, Theorem 2, p.221, \cite{Smullyan}]\label{}
For a consistent RE theory $T$, if $T$ is Rosser, then $T$ is ${\sf EI}$.
\end{theorem}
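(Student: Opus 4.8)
The plan is to realize $(T_P,T_R)$ as a target into which a \emph{known} effectively inseparable pair semi-reduces, and then to transfer effective inseparability along that semi-reduction using Proposition~\ref{reduction prop}(2). Concretely, I would fix once and for all some effectively inseparable pair $(A,B)$ of RE sets (such pairs exist; see the examples above), and show that the Rosser property forces $(A,B)$ to be semi-reducible to $(T_P,T_R)$. The whole argument then collapses to a single invocation of the semi-reduction preservation of ${\sf EI}$.

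First I would dispatch the trivial preliminaries: because $T$ is consistent, no sentence is simultaneously provable and refutable, so $T_P\cap T_R=\emptyset$; and because $T$ is RE, both $T_P$ and $T_R$ are RE. Thus $(T_P,T_R)$ is a legitimate disjoint pair of RE sets, and the reduction machinery of Section~2.2 applies to it.

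The heart of the matter is the observation that ``separable in $T$'' is, word for word, ``semi-reducible to $(T_P,T_R)$''. Since $T$ is Rosser, the chosen pair $(A,B)$ is separable in $T$: there is a single formula $\phi(x)$ with $n\in A\Rightarrow T\vdash\phi(\overline n)$ and $n\in B\Rightarrow T\vdash\neg\phi(\overline n)$. Setting $g(n)=\ulcorner\phi(\overline n)\urcorner$, the two separability clauses read exactly $n\in A\Rightarrow g(n)\in T_P$ and $n\in B\Rightarrow g(n)\in T_R$, which is precisely the defining condition for $(A,B)$ to be semi-reducible to $(T_P,T_R)$ via $g$.

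The one point that needs (light) care --- and what I would flag as the main obstacle --- is verifying that $g$ is genuinely recursive. The key here is that $\phi$ is a single fixed formula independent of $n$, so $g$ is merely the composition of numeral substitution with G\"odel coding, both of which are effective under the arithmetization assumed throughout. Granting this, Proposition~\ref{reduction prop}(2) applies immediately: $(A,B)$ is ${\sf EI}$ and semi-reduces to $(T_P,T_R)$, hence $(T_P,T_R)$ is ${\sf EI}$, i.e.\ $T$ is ${\sf EI}$.
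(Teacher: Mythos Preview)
Your argument is correct, and it is precisely Smullyan's original proof: fix one effectively inseparable pair $(A,B)$, observe that the Rosser property supplies a formula $\phi$ separating it, and read off the semi-reduction $n\mapsto\ulcorner\phi(\overline n)\urcorner$ from $(A,B)$ to $(T_P,T_R)$; Proposition~\ref{reduction prop}(2) then transfers ${\sf EI}$. The paper explicitly acknowledges this route but opts for a different, shorter one: it simply observes that ``$T$ is Rosser'' is \emph{literally} the statement that $(T_P,T_R)$ is semi-${\sf DU}$ (every disjoint RE pair is semi-reducible to it, via exactly the map you wrote down), and then invokes Theorem~\ref{equiv of EI} to pass from semi-${\sf DU}$ to ${\sf EI}$ in one step. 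Your approach has the advantage of being self-contained---it needs only the existence of a single ${\sf EI}$ pair and the easy preservation lemma, not the full chain of equivalences---whereas the paper's approach leverages the machinery it has just built to reduce the theorem to a definitional observation.
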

\begin{proof}\label{}
Smullyan proves this fact via Proposition \ref{reduction prop}(2). Here we give a short proof via Theorem \ref{equiv of EI}. Since $T$ is Rosser,  $(T_P, T_R)$ is semi-${\sf DU}$. Thus, by Theorem \ref{equiv of EI}, $T$ is ${\sf EI}$.
\end{proof}

The theory $\mathbf{R}$ was introduced by Tarski, Mostowski and R.\ Robinson  in \cite{Tarski} which is an important base theory in the study of incompleteness and undecidability, and has many nice meta-mathematical properties.

\begin{definition}
Let $\mathbf{R}$ be the theory consisting of schemes $\sf{Ax1}$-$\sf{Ax5}$ with $L(\mathbf{R})=\{\mathbf{0}, \mathbf{S}, +, \cdot, \leq\}$ where  we define $x\leq y$ as $\exists z (z+x=y)$, and  $\overline{n}=\mathbf{S}^n \mathbf{0}$ for $n \in \mathbb{N}$.
\begin{description}
  \item[\sf{Ax1}] $\overline{m}+\overline{n}=\overline{m+n}$;
  \item[\sf{Ax2}] $\overline{m}\cdot\overline{n}=\overline{m\cdot n}$;
  \item[\sf{Ax3}] $\overline{m}\neq\overline{n}$, if $m\neq n$;
  \item[\sf{Ax4}] $\forall x(x\leq \overline{n}\rightarrow x=\overline{0}\vee \cdots \vee x=\overline{n})$;
  \item[\sf{Ax5}] $\forall x(x\leq \overline{n}\vee \overline{n}\leq x)$.
\end{description}
\end{definition}

\begin{definition}[Definable, strongly representable, weakly representable]\label{property of theory}
Let $T$ be a consistent RE theory.
\begin{enumerate}[(1)]
  \item We say a total $n$-ary function $f$ on $\mathbb{N}$ is  \emph{definable}  in $T$ if there exists a $L(T)$-formula $\phi(x_1, \cdots, x_n,y)$ such that for any $\langle a_1, \cdots, a_n\rangle\in\mathbb{N}^n$,
\[T\vdash \forall y [\phi(\overline{a_1}, \cdots, \overline{a_n},y)\leftrightarrow y= \overline{f(a_1, \cdots, a_n)}].\]
We say $\phi(x_1, \cdots, x_n,y)$ \emph{defines} $f$ in $T$.
 \item
We say an $n$-ary relation $R$ on $\mathbb{N}$ is \emph{strongly representable} in $T$ if there exists a $L(T)$-formula $\phi(x_1, \cdots, x_n)$ such that for any $\langle a_1, \cdots, a_n\rangle\in\mathbb{N}^n$, if $R(a_1, \cdots, a_n)$ holds, then
$T\vdash \phi(\overline{a_1}, \cdots, \overline{a_n})$, and if $R(a_1, \cdots, a_n)$ does not hold, then
$T\vdash \neg\phi(\overline{a_1}, \cdots, \overline{a_n})$. We say  $\phi(x_1, \cdots, x_n)$ \emph{strongly represents} the relation $R$.
\item
We say an $n$-ary relation $R$ on $\mathbb{N}$ is \emph{weakly representable} in $T$ if there exists a $L(T)$-formula $\phi(x_1, \cdots, x_n)$ such that for any $\langle a_1, \cdots, a_n\rangle\in\mathbb{N}^n$, $R(a_1, \cdots, a_n)$ holds if and only if
$T\vdash \phi(\overline{a_1}, \cdots, \overline{a_n})$. We say  $\phi(x_1, \cdots, x_n)$ \emph{weakly represents} the relation $R$.

\end{enumerate}
\end{definition}

In this paper, we use the following nice properties of $\mathbf{R}$:

\begin{fact}[\cite{Per 97, Tarski}]~
\begin{itemize}
  \item All recursive  functions are definable in $\mathbf{R}$.
  \item Any disjoint pair of RE sets is separable (in fact exactly separable) in $\mathbf{R}$. Thus,  $\mathbf{R}$ is a Rosser theory (in fact an exact Rosser theory).
\end{itemize}
\end{fact}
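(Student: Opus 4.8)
The plan is to reduce both clauses to a single engine and build it from the bottom up. The engine has three layers: (0) $\mathbf{R}$ \emph{decides every closed $\Delta_0$ (bounded) formula}, i.e.\ for a closed bounded $\delta$ either $\mathbf{R}\vdash\delta$ or $\mathbf{R}\vdash\neg\delta$, and correctly since $\mathbb{N}\models\mathbf{R}$; (1) $\mathbf{R}$ is \emph{$\Sigma_1$-complete}, i.e.\ every true closed $\Sigma_1$ sentence is a theorem; (2) every recursive relation is \emph{strongly (indeed exactly) representable}. I would prove (0) by induction on bounded formula complexity: the atomic cases $\overline m+\overline n=\overline{m+n}$, $\overline m\cdot\overline n=\overline{m\cdot n}$, $\overline m\neq\overline n$ and the positive instances of $\le$ are supplied by $\textsf{Ax1}$--$\textsf{Ax3}$; the refutation of false atomic $\le$-facts $\overline n\not\le\overline m$ (for $n>m$) comes from $\textsf{Ax4}$ together with $\textsf{Ax3}$; and the two bounded quantifiers are eliminated against a numeral bound by $\textsf{Ax4}$ (turning $(\forall t\le\overline n)\chi$ into the finite conjunction $\chi(\overline 0)\wedge\cdots\wedge\chi(\overline n)$ and dually for $\exists$). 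Layer (1) is then immediate: a true sentence $\exists\vec z\,\delta$ has a numeral witness, and (0) proves the instance of its $\Delta_0$ matrix.

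For the first clause (definability of recursive functions) I would present $f$ through a halting computation: write the graph as $\phi(x,y)\equiv\exists w\,[\,\mathrm{Comp}(x,w)\wedge \mathrm{out}(w)=y\,]$, where $\mathrm{Comp}$ and $\mathrm{out}$ are bounded predicates describing ``$w$ codes a halting computation on input $x$'' and its output. The positive instance $\mathbf{R}\vdash\phi(\overline a,\overline{f(a)})$ is a true $\Sigma_1$ sentence, hence follows from (1). The nontrivial half is uniqueness, $\mathbf{R}\vdash\forall y\,[\phi(\overline a,y)\to y=\overline{f(a)}]$: given a witness $w$ for $\phi(\overline a,y)$, let $w_0$ be the \emph{actual} computation number, compare the variable $w$ with the \emph{numeral} $\overline{w_0}$ using $\textsf{Ax5}$, and in the bounded case reduce via $\textsf{Ax4}$ to the finitely many numerals $\le\overline{w_0}$; since $\mathrm{Comp}$ is bounded, layer (0) refutes $\mathrm{Comp}(\overline a,\overline k)$ for every $k\neq w_0$, forcing $w=\overline{w_0}$ and hence $y=\overline{f(a)}$. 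Applying this to the characteristic function of a recursive relation (reading off the value $\overline 1$ versus $\overline 0$, separated by $\textsf{Ax3}$) yields layer (2): recursive relations are exactly representable, so henceforth $\mathbf{R}$ may be treated as \emph{deciding each numeral instance} of any recursive predicate.

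For the second clause I would use the ``race'' (first-to-be-enumerated) formula. Writing $A=\{n:\exists s\,\alpha(n,s)\}$ and $B=\{n:\exists s\,\beta(n,s)\}$ with $\alpha,\beta$ bounded single-step enumeration predicates, set $\phi(x)\equiv\exists s\,[\,\alpha(x,s)\wedge(\forall t\le s)\,\neg\beta(x,t)\,]$. Since $A\cap B=\emptyset$, one checks $\{n:\mathbb{N}\models\phi(n)\}=A$. If $n\in A$ then $\phi(\overline n)$ is a true $\Sigma_1$ sentence, provable by (1). If $n\in B$, let $s_1$ be the \emph{numeral} naming the stage at which $n$ enters $B$; assuming $\phi(\overline n)$ with witness $s$, compare $s$ with $\overline{s_1}$ by $\textsf{Ax5}$: if $\overline{s_1}\le s$, instantiating the bounded conjunct at $t=\overline{s_1}$ gives $\neg\beta(\overline n,\overline{s_1})$, contradicting the layer-(0) theorem $\beta(\overline n,\overline{s_1})$; if $s\le\overline{s_1}$, $\textsf{Ax4}$ reduces $s$ to one of finitely many numerals $\overline k$, each killed because $\alpha(\overline n,\overline k)$ is refuted by (0) (as $n\notin A$). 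Hence $\mathbf{R}\vdash\neg\phi(\overline n)$, giving separability; soundness of $\mathbf{R}$ supplies the converse inclusions, and a symmetric treatment of $\neg\phi$ (so that $n\notin A\cup B$ leaves both $\phi(\overline n)$ and $\neg\phi(\overline n)$ unprovable) upgrades this to \emph{exact} separability, whence $\mathbf{R}$ is an exact Rosser theory.

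The step I expect to be the main obstacle is uniformly the passage from a \emph{variable} existential witness to a \emph{numeral}: $\textsf{Ax5}$ only linearly orders elements \emph{against a fixed numeral}, never two arbitrary elements, so nonstandard models of $\mathbf{R}$ can be extremely wild and one cannot simply ``compare the two witnesses.'' The whole design above is dictated by this constraint --- every refutation is arranged so that one side of each comparison is the genuine numeral ($\overline{w_0}$, $\overline{s_1}$, $\overline k$) produced by the real computation, and the matrices are kept $\Delta_0$ precisely so that layer (0) can supply the negative instances that $\Sigma_1$-completeness alone cannot. Verifying (0) and the $\textsf{Ax4}$-based bounded-quantifier elimination in full is the routine-but-load-bearing core; once it is in place, (1), (2), definability, and separability all follow by the witness-pinning arguments sketched above.
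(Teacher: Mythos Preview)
The paper does not prove this Fact; it is quoted with a citation to Lindstr\"om and to Tarski--Mostowski--Robinson, so there is no in-paper argument to compare against. Your plan follows the standard route in those sources (layers (0)--(2), witness-pinning via $\textsf{Ax4}/\textsf{Ax5}$, the Rosser witness-comparison formula), and the separability half of the second bullet is handled correctly. Two genuine gaps remain.

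In the uniqueness half of definability you treat only ``the bounded case'' $w\le\overline{w_0}$. The case $\overline{w_0}\le w$ is not addressed, and as your own closing paragraph warns, $\mathbf{R}$ cannot by itself rule out a nonstandard $w$ above $\overline{w_0}$ satisfying $\mathrm{Comp}(\overline a,w)$ with $\mathrm{out}(w)\neq\overline{f(a)}$. The repair is the very device you already use in the Rosser formula: build a minimality clause $(\forall w'<w)\,\neg\mathrm{Comp}(x,w')$ into $\phi$, so that in the unbounded case instantiating at $w'=\overline{w_0}$ contradicts the layer-(0) theorem $\mathrm{Comp}(\overline a,\overline{w_0})$.

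The upgrade from separable to \emph{exactly} separable is not delivered by ``soundness plus a symmetric treatment''. Soundness yields $\mathbf{R}\vdash\phi(\overline n)\Rightarrow n\in A$ because $\phi$ is $\Sigma_1$, but $\neg\phi$ is $\Pi_1$ and soundness only gives $\mathbf{R}\vdash\neg\phi(\overline n)\Rightarrow n\notin A$, not $n\in B$. Concretely: take $A=\{0\}$ with $\alpha(x,s)\equiv(x=\overline 0\wedge s=\overline 0)$ and $B=\{1\}$; then $\textsf{Ax3}$ already gives $\mathbf{R}\vdash\forall s\,\neg\alpha(\overline 2,s)$, hence $\mathbf{R}\vdash\neg\phi(\overline 2)$ although $2\notin B$, so your $\phi$ fails exactness. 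The paper itself records the correct route as a separate theorem (Putnam--Smullyan): once $\mathbf{R}$ is Rosser and all recursive functions are definable in it, exact Rosser follows. So after closing the first gap, your two clauses together do yield exact separability --- but via that extra theorem, not via the soundness/symmetry shortcut.
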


As a corollary, if $T$ is a consistent RE extension of $\mathbf{R}$, then $T$ is a Rosser theory, and hence is ${\sf EI}$.
Fact \ref{important fact} provides us with some sufficient conditions to show that a theory is essentially undecidable and recursively inseparable.

\begin{fact}[\cite{Shoenfield61, Smullyan63}]\label{important fact}
Let $T$ be a consistent RE theory.
\begin{enumerate}[(1)]
  \item If $T$ satisfies any one of the following conditions\footnote{Condition $A$ is stronger than condition $B$.}, then $T$ is essentially undecidable:
\begin{enumerate}[(A)]
  \item All recursive functions are definable in $T$.
  \item All recursive sets are strongly representable in $T$.
\end{enumerate}
  \item If all recursive sets are strongly representable in $T$, then $T$ is ${\sf RI}$.
\end{enumerate}
\end{fact}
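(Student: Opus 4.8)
The plan is to prove part (2) first and then obtain part (1) as an easy consequence, since the whole statement is driven by a single diagonal argument. Note at the outset that condition (A) implies condition (B): this is exactly the content of the footnote (the standard fact that if the characteristic function of a recursive set is definable, then the set is strongly representable). Hence throughout it suffices to work under hypothesis (B), namely that every recursive set is strongly representable in $T$.

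For part (2), I would argue by contradiction. Suppose $(T_P,T_R)$ is \emph{not} recursively inseparable; then there is a recursive set $C$ with $T_P\subseteq C$ and $C\cap T_R=\emptyset$, so $C$ consistently ``decides'' provability (theorems land in $C$, refutable sentences stay out). Fix an effective enumeration $(\psi_n)_{n\in\mathbb N}$ of the $L(T)$-formulas with one free variable indexed by G\"odel number (with a fixed default for indices not coding such a formula), and consider
\[ G=\{\,n: \ulcorner\psi_n(\overline n)\urcorner\notin C\,\}. \]
Because the substitution map $n\mapsto\ulcorner\psi_n(\overline n)\urcorner$ is recursive and $C$ is recursive, $G$ is a recursive set; by hypothesis (B) it is strongly represented by some formula $\gamma(x)$. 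Let $m=\ulcorner\gamma\urcorner$, so that $\psi_m=\gamma$ and hence $\psi_m(\overline m)=\gamma(\overline m)$. The diagonal case analysis then closes the argument: if $m\in G$ then strong representability gives $T\vdash\gamma(\overline m)$, so $\ulcorner\gamma(\overline m)\urcorner\in T_P\subseteq C$, contradicting $m\in G$; and if $m\notin G$ then $T\vdash\neg\gamma(\overline m)$, so $\ulcorner\gamma(\overline m)\urcorner\in T_R$ and therefore $\ulcorner\gamma(\overline m)\urcorner\notin C$, contradicting $m\notin G$. Either way the separator $C$ cannot exist, so $(T_P,T_R)$ is $\sf RI$.

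For part (1), the key observation is that strong representability is inherited by consistent RE extensions: if $\phi$ strongly represents the recursive set $R$ in $T$, then for any $T'\supseteq T$ we still have $R(a)\Rightarrow T'\vdash\phi(\overline a)$ and $\neg R(a)\Rightarrow T'\vdash\neg\phi(\overline a)$, so $\phi$ strongly represents $R$ in $T'$ as well. Hence, under (B), every consistent RE extension $T'$ of $T$ again satisfies (B); applying part (2) to $T'$ shows $(T'_P,T'_R)$ is $\sf RI$, and a consistent theory whose provable/refutable pair is recursively inseparable cannot have a recursive theorem set (otherwise $T'_P$ would itself be a recursive superset of $T'_P$ disjoint from $T'_R$, separating the pair). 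Thus each $T'$ is undecidable, so $T$ is essentially undecidable under (B), and hence also under (A) since (A) $\Rightarrow$ (B).

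The main obstacle — and the only genuinely nontrivial step — is the self-referential set-up in part (2): one must define $G$ so that membership of the G\"odel number $m=\ulcorner\gamma\urcorner$ of its own representing formula flips exactly \emph{against} what strong representability forces, which is where the diagonalization is used. Everything else (recursiveness of $G$, upward persistence of strong representability, and the implication $\sf RI \Rightarrow$ undecidable) is routine once this core is in place; I would keep the construction uniform so that the single diagonal argument yields both the $\sf RI$ conclusion of (2) and, specialized to each extension, the essential undecidability of (1).
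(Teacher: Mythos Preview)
The paper does not supply a proof of this fact at all; it is simply cited from \cite{Shoenfield61, Smullyan63} and used as background. Your argument is correct and is essentially the standard one: the diagonal set $G=\{n:\ulcorner\psi_n(\overline n)\urcorner\notin C\}$ built from a putative recursive separator $C$, together with strong representability, yields the contradiction for part~(2), and upward persistence of strong representability plus the trivial implication ${\sf RI}\Rightarrow$ undecidable gives part~(1).
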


\begin{remark}
It is easy to check that: (1) if $T$ is ${\sf EI}$, then $T$ is ${\sf RI}$; (2) if $T$ is ${\sf RI}$, then $T$ is ${\sf EU}$; (3) if $T$ is ${\sf EI}$, then $T$ is creative (for any disjoint pair $(A,B)$ of RE sets, if $(A,B)$ is ${\sf EI}$, then both $A$ and $B$ are creative).
\end{remark}

It has been shown that if $T$ satisfies Condition $(A)$ in Fact \ref{important fact}  plus some natural additional condition, then $T$ is creative (e.g., Theorem \ref{group 1}). Theorem \ref{group 1}-\ref{group 3} provide us with some sufficient conditions to show that a theory is creative, ${\sf EI}$ and exact Rosser.

\begin{theorem}[Ehrenfeucht and Feferman, \cite{Ehrenfeucht}]\label{group 1}
Suppose $T$ is a consistent RE theory and has
a formula $x \leq y$ with two free variables $x$ and $y$, satisfying the following conditions\footnote{Shoefield shows in \cite{Shoenfield} that conditions (1)-(3) can be replaced by weaker ones.}:
\begin{enumerate}[(1)]
  \item For each $n\in\mathbb{N}$, $T\vdash \forall x (x \leq \overline{n} \leftrightarrow x = \overline{0} \vee x = \overline{1} \vee\cdots \vee x = \overline{n})$.
  \item For each $n \in\mathbb{N}$, $T\vdash \forall x (x \leq \overline{n} \vee \overline{n} \leq x)$.
  \item All recursive  functions  are definable in $T$.
\end{enumerate}
Then any RE set is weakly representable in $T$ and hence $T$ is creative.
\end{theorem}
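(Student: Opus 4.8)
The plan is to establish the two assertions in turn: first that every RE set is weakly representable in $T$, and then, using this, that $T_P$ is creative.

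\textbf{Weak representability.} Let $A$ be an RE set and fix a recursive relation $R$ with $a\in A\iff\exists y\,R(a,y)$; passing to the ``least witness'' relation, I may assume that for each $a$ there is at most one $y$ with $R(a,y)$. Condition $(3)$ guarantees that every recursive function, and hence (via characteristic functions) every recursive relation, is strongly representable in $T$; let $\rho(x,y)$ strongly represent $R$, so that $R(a,y)\Rightarrow T\vdash\rho(\bar a,\bar y)$ and $\neg R(a,y)\Rightarrow T\vdash\neg\rho(\bar a,\bar y)$. I would then try the formula $\phi(x):\equiv\exists y\,\rho(x,y)$. The forward implication $a\in A\Rightarrow T\vdash\phi(\bar a)$ is immediate, since a genuine standard witness $y_0$ gives $T\vdash\rho(\bar a,\bar y_0)$ and hence $T\vdash\phi(\bar a)$.

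The backward implication $T\vdash\phi(\bar a)\Rightarrow a\in A$ is the crux, and this is exactly where conditions $(1)$ and $(2)$ are needed. The danger is that $T$ might prove $\exists y\,\rho(\bar a,y)$ through a ``non-standard'' witness even though $a\notin A$; from $a\notin A$ one obtains only $T\vdash\neg\rho(\bar a,\bar k)$ for each standard $k$, which does not by itself refute $\exists y\,\rho(\bar a,y)$. The role of $(1)$ is to collapse any witness that is provably $\le\bar n$ to one of $\bar 0,\dots,\bar n$, converting a bounded existential into a finite disjunction $\bigvee_{k\le n}\rho(\bar a,\bar k)$ of standard instances, all of which are refutable when $a\notin A$; the role of $(2)$ is to supply the comparability needed to force the witness into such a bounded range. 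The delicate and, I expect, main obstacle is that no total recursive bound on the witness can be supplied from outside the formula — such a bound would make the arbitrary RE set $A$ recursive — so the witness must be pinned down internally. Carrying this out is precisely the content of the Ehrenfeucht--Feferman argument, and I would follow \cite{Ehrenfeucht}. Once the backward direction is secured, $\phi$ weakly represents $A$.

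\textbf{Creativity.} Granting that every RE set is weakly representable, I would deduce that $T_P$ is creative by showing it is $m$-complete. Fix the complete RE set $\K$, which is creative, and let $\phi_{\K}$ weakly represent it; then $n\in\K\iff T\vdash\phi_{\K}(\bar n)\iff\ulcorner\phi_{\K}(\bar n)\urcorner\in T_P$, so the recursive map $n\mapsto\ulcorner\phi_{\K}(\bar n)\urcorner$ witnesses $\K\le_m T_P$. Since $T$ is RE, $T_P$ is RE, and creativity transfers upward along an $m$-reduction into an RE set: if $g$ reduces $\K$ to $T_P$ and $p$ is a productive function for $\overline{\K}$, then using the s-m-n theorem to obtain $W_{h(i)}=g^{-1}[W_i]$ one checks that $W_i\subseteq\overline{T_P}$ implies $W_{h(i)}\subseteq\overline{\K}$, whence $i\mapsto g(p(h(i)))$ is a productive function for $\overline{T_P}$. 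Thus $\overline{T_P}$ is productive, $T_P$ is creative, and therefore $T$ is creative. The essential difficulty throughout is the backward direction of weak representability; the strong representability of recursive relations from $(3)$, the forward direction, and the passage from weak representability to creativity are all routine.
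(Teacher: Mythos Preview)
The paper does not prove this theorem: it is stated as a result of Ehrenfeucht and Feferman with a citation to \cite{Ehrenfeucht} and no proof is given, so there is no in-paper argument to compare your proposal against. Your outline is consistent with the standard proof and, like the paper, ultimately defers the substantive step to \cite{Ehrenfeucht}.

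That said, one point is worth flagging. Your candidate formula $\phi(x)\equiv\exists y\,\rho(x,y)$ is not the formula that makes the backward direction go through; as you yourself note, from $a\notin A$ one gets only $T\vdash\neg\rho(\bar a,\bar k)$ for each $k$, and conditions (1)--(2) alone do not force an arbitrary witness of $\exists y\,\rho(\bar a,y)$ into a fixed finite range. The Ehrenfeucht--Feferman argument does not merely ``pin down the witness internally'' for this $\phi$: it replaces $\phi$ by a Rosser-style formula that plays the enumeration of $A$ off against the enumeration of refutations in $T$, and conditions (1)--(2) are used to show that this modified formula has the required two-way behavior. Your sketch correctly locates the difficulty and the roles of (1) and (2), but the sentence ``Once the backward direction is secured, $\phi$ weakly represents $A$'' is misleading if read as asserting that the original $\phi$ works. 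The creativity paragraph is correct as written.
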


\begin{definition}\label{}
Let $T$ be a consistent RE theory.
\begin{enumerate}[(1)]
  \item We say that all recursive sets are \emph{uniformly weakly representable} in $T$ if there is a  recursive function $f(x)$ such that for every number $i$, if $W_i$ is recursive, then $f(i)$ is the G\"{o}del number of a formula of $T$ which weakly represents $W_i$.
  \item
We say all recursive sets are \emph{uniformly strongly representable}  in $T$ if there is a recursive function $g(x, y)$ such that for all numbers $i$ and $j$, if $W_j$ is the complement of $W_i$, then $g(i,j)$ is the G\"{o}del number of a formula which strongly represents $W_i$ in $T$.
\end{enumerate}
\end{definition}

\begin{theorem}[Smullyan, \cite{Smullyan61}]\label{group 2}
Let $T$ be a consistent RE theory.
\begin{enumerate}[(1)]
  \item If all recursive sets are uniformly weakly representable in $T$, then $T$ is creative.
  \item If $T$ is a consistent theory in which all recursive sets are uniformly strongly representable, then $T$ is ${\sf EI}$.
      \end{enumerate}
\end{theorem}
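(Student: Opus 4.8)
The plan is to handle both parts by the same mechanism: a self-referential construction carried out \emph{at the level of indices} via the recursion theorem, together with the observation that copying a semi-decidable condition at a single point produces a \emph{finite}, hence recursive, set, to which the uniform representability hypotheses apply. Since we are not assuming that recursive functions are definable in $T$ (the hypothesis of Theorem \ref{group 1}), the self-reference must be produced in the metatheory rather than by an internal diagonal lemma; the strong double recursion theorem (Theorem \ref{SDRT}) and its corollary Theorem \ref{thm 2.4} are exactly the tools for this.

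For (1) I would show that the complement of $T_P$ is productive; as $T_P$ is RE, this gives that $T_P$ is creative, i.e.\ $T$ is creative. Let $f$ witness the uniform weak representability of recursive sets, and for an index $e$ let $\theta_e$ be the formula with $\ulcorner\theta_e\urcorner = f(e)$ and put $m(e) = \ulcorner\theta_e(\overline e)\urcorner$, all recursive in $e$. By the recursion theorem I obtain an index $e$ (depending recursively on the input $i$) with $W_e = \{x : x = e \wedge m(e) \in W_i\}$, so that $W_e \subseteq \{e\}$ is finite and therefore recursive; hence $\theta_e$ weakly represents $W_e$. Evaluating this at $e$ gives $e \in W_e \Leftrightarrow T \vdash \theta_e(\overline e)$, and since $e \in W_e \Leftrightarrow m(e) \in W_i$ by construction, I get the key equivalence $m(e) \in W_i \Leftrightarrow T \vdash \theta_e(\overline e)$, where $\theta_e(\overline e)$ is the sentence with code $m(e)$. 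Setting $p(i) = m(e)$, the usual diagonal argument finishes: if $W_i \subseteq \overline{T_P}$, then $m(e) \in W_i$ would make $\theta_e(\overline e)$ both provable and unprovable, so $m(e) \notin W_i$, and then the equivalence gives $T \nvdash \theta_e(\overline e)$, i.e.\ $m(e) \in \overline{T_P} \setminus W_i$. Thus $p$ is a productive function for $\overline{T_P}$.

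For (2) I would directly produce the recursive function $F$ witnessing that $(T_P, T_R)$ is ${\sf EI}$. Let $g$ witness uniform strong representability. Given $i, j$, I use the strong double recursion theorem to obtain interdependent indices $k, k'$ such that, writing $\eta$ for the formula with $\ulcorner\eta\urcorner = g(k,k')$ and $m = \ulcorner\eta(\overline k)\urcorner$, one has $W_k = \{x : x = k \wedge m \in W_j\}$ and $W_{k'} = \{x : x \neq k\} \cup \{x : x = k \wedge m \in W_i\}$. Under the inseparability hypotheses $T_P \subseteq W_i$, $T_R \subseteq W_j$, $W_i \cap W_j = \emptyset$, I claim $F(i,j) = m \notin W_i \cup W_j$. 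Indeed, if $m \in W_j$ then $W_k = \{k\}$ and $W_{k'} = \overline{W_k}$ is a recursive set, so $\eta$ strongly represents $W_k$; from $k \in W_k$ it follows that $T \vdash \eta(\overline k)$, putting $m \in T_P \subseteq W_i$ and contradicting disjointness. Symmetrically, if $m \in W_i$ then $W_k = \emptyset$, $W_{k'} = \overline{W_k}$, and $k \notin W_k$ yields $T \vdash \neg\eta(\overline k)$, so $m \in T_R \subseteq W_j$, again a contradiction. (In the remaining case $m \notin W_i \cup W_j$ there is nothing to prove, which is why strong representability is only invoked in the cases where $W_{k'}$ really is the complement of $W_k$.) Hence $m \notin W_i \cup W_j$, as required.

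The main obstacle is exactly the legitimacy of the self-reference. One is tempted to ask for a code $c$ equal to $\ulcorner\theta(\overline c)\urcorner$, but no such value-fixed-point need exist; the point of evaluating the representing formula at the \emph{numeral of the index} $e$ (resp.\ $k$) and copying the relevant $W_i$/$W_j$ condition only at the single resulting point is that it converts the problem into an honest fixed point of a recursive operation on indices, solvable by Theorem \ref{SDRT}, while simultaneously guaranteeing that the represented sets are subsets of a singleton and hence recursive, so that the hypotheses of uniform weak (resp.\ strong) representability genuinely apply. Once these two points are arranged, the diagonal arguments are routine.
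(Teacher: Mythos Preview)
The paper does not supply its own proof of this theorem: it is stated with attribution to Smullyan \cite{Smullyan61} and used as background in the expository Section~3.1, so there is no in-paper argument to compare against. That said, your argument is correct and is very much in the spirit of the paper's methodology, pushing the self-reference to the level of indices via the (strong double) recursion theorem rather than through an internal diagonal lemma.

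Two small remarks. In part~(1), your $m(e)$ must be a total recursive function even for $e$ with $W_e$ non-recursive (where $f(e)$ need not code a formula); this is harmless since one can define the substitution operation totally with a dummy value on non-formula inputs, and at the fixed point $W_e\subseteq\{e\}$ is recursive so $f(e)$ really is a formula code. In part~(2), you are careful to note that $W_{k'}=\overline{W_k}$ holds only in the two ``bad'' cases $m\in W_i$ or $m\in W_j$, which is exactly where strong representability is invoked; in the remaining case $W_{k'}\neq\overline{W_k}$ but nothing needs to be proved. The application of Theorem~\ref{SDRT} is legitimate: taking $R_1(x,y_1,y_2,z_1,z_2)\equiv x=z_1\wedge m(z_1,z_2)\in W_{y_2}$ and $R_2(x,y_1,y_2,z_1,z_2)\equiv x\neq z_1\vee m(z_1,z_2)\in W_{y_1}$ (both RE, since $x\neq z_1$ is decidable) yields recursive $t_1,t_2$ with $k=t_1(i,j)$, $k'=t_2(i,j)$ satisfying your specifications, and then $F(i,j)=m(k,k')$ is recursive.
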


\begin{theorem}[Putnam and Smullyan, \cite{Putnam}]~\label{group 3}
Let $T$ be a consistent RE theory.
\begin{enumerate}[(1)]
  \item  If all recursive functions are definable in $T$ and some ${\sf EI}$  pair of RE sets is separable in $T$, then $T$ is an exact Rosser theory (see Theorem 3, \cite{Putnam}).
  \item If $T$ is a Rosser theory in which  all recursive functions are definable, then $T$ is an exact Rosser theory (see Theorem 4, \cite{Putnam}).
\end{enumerate}
\end{theorem}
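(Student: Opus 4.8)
The plan is to prove part (1) directly and then obtain part (2) as an immediate corollary. For (2), note that a Rosser theory separates \emph{every} disjoint pair of RE sets, and $\sf EI$ pairs of RE sets exist (for instance the pairs exhibited in the examples above). Hence under the hypotheses of (2) some $\sf EI$ pair is automatically separable in $T$, all recursive functions are definable by assumption, and so part (1) applies to give exact Rosser-ness. Thus the entire content lies in part (1).

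For (1), suppose all recursive functions are definable in $T$ and that an $\sf EI$ pair $(C,D)$ is separable in $T$ via a formula $\phi_0(x)$. The first step is to upgrade this plain separation into an \emph{exact} separation of a closely related pair. I set
\[ C^\ast = \{ n : T \vdash \phi_0(\overline n) \}, \qquad D^\ast = \{ n : T \vdash \neg\phi_0(\overline n) \}. \]
Since $T$ is RE these are RE sets, and since $T$ is consistent they are disjoint. By their very definition, $\phi_0$ \emph{exactly} separates $(C^\ast, D^\ast)$. Moreover separability of $(C,D)$ gives $C \subseteq C^\ast$ and $D \subseteq D^\ast$, so the identity function semi-reduces $(C,D)$ to $(C^\ast, D^\ast)$; applying Proposition \ref{reduction prop}(2) then shows $(C^\ast, D^\ast)$ is $\sf EI$. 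At this point I will have produced an $\sf EI$ pair that is, in addition, exactly separable in $T$.

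The second step transfers this exact separation to an arbitrary disjoint pair $(A,B)$ of RE sets. Since $(C^\ast, D^\ast)$ is $\sf EI$, by Theorem \ref{equiv of EI} it is $\sf DU$, so $(A,B)$ is \emph{reducible} (with biconditionals) to $(C^\ast, D^\ast)$ via a recursive function $r$, meaning $n \in A \Leftrightarrow r(n) \in C^\ast$ and $n \in B \Leftrightarrow r(n) \in D^\ast$. As $r$ is recursive it is definable in $T$ by some formula $\rho(x,y)$, and I define
\[ \psi(x) := \exists y \, (\rho(x,y) \wedge \phi_0(y)). \]
Definability of $r$ yields $T \vdash \psi(\overline n) \leftrightarrow \phi_0(\overline{r(n)})$, hence also $T \vdash \neg\psi(\overline n) \leftrightarrow \neg\phi_0(\overline{r(n)})$. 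Chaining the equivalences,
\[ T \vdash \psi(\overline n) \iff T \vdash \phi_0(\overline{r(n)}) \iff r(n) \in C^\ast \iff n \in A, \]
and symmetrically $T \vdash \neg\psi(\overline n) \iff n \in B$, so $\psi$ exactly separates $(A,B)$. Since $(A,B)$ was arbitrary, $T$ is an exact Rosser theory.

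The conceptual key, and the step I expect to demand the most care, is the first one: recognising that the set of sentences $T$ proves from $\phi_0$ automatically packages into an \emph{exactly} separated pair $(C^\ast, D^\ast)$, and then verifying via Proposition \ref{reduction prop}(2) that this pair remains $\sf EI$. Once that is secured, the double universality of $\sf EI$ pairs (Theorem \ref{equiv of EI}) together with definability of recursive functions finishes the argument routinely; the only subtlety there is that $\sf DU$ supplies a genuine biconditional reduction rather than a mere semi-reduction, which is precisely what promotes ordinary separation of $(A,B)$ into exact separation.
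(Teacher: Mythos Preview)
The paper does not actually prove this theorem; it merely states the result and cites Putnam and Smullyan's original paper \cite{Putnam} for both parts. So there is no in-paper proof to compare your proposal against.

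That said, your argument is correct and is essentially the classical Putnam--Smullyan proof, executed cleanly with the tools available in this paper. The reduction of (2) to (1) is immediate as you say. For (1), the two-step strategy is exactly right: first pass from the separated ${\sf EI}$ pair $(C,D)$ to the exactly separated supersets $(C^\ast,D^\ast)$, verify these remain ${\sf EI}$ via Proposition~\ref{reduction prop}(2), then invoke ${\sf EI}\Rightarrow{\sf DU}$ from Theorem~\ref{equiv of EI} to reduce an arbitrary disjoint RE pair $(A,B)$ to $(C^\ast,D^\ast)$ by a recursive $r$, and finally pull the exact separation back along $r$ using its definability in $T$. The verification that $T\vdash \psi(\overline n)\leftrightarrow\phi_0(\overline{r(n)})$ from the definability clause $T\vdash\forall y\,[\rho(\overline n,y)\leftrightarrow y=\overline{r(n)}]$ is routine and your chain of equivalences is sound. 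Your emphasis that ${\sf DU}$ gives a genuine two-sided reduction (not just a semi-reduction) is the crucial point that upgrades separation to exact separation, and you identify it correctly.
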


A natural question is: if a consistent RE theory $T$ satisfies $(A)$ or $(B)$ in Fact \ref{important fact}, is it creative? Shoenfield answers this question negatively (see Theorem \ref{Shoefield theory}).

\begin{theorem}[Shoenfield, \cite{Shoenfield61}]\label{Shoefield theory}
There is a theory $T$ in which any recursive function is definable but $T$ is not creative, and no non-recursive set is weakly representable. In fact, $T$ has Turing degree $<0^{\prime}$.
\end{theorem}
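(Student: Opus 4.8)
The plan is to construct $T$ directly by a priority argument, building it as a consistent RE extension of the theory $\mathbf{R}$. First I would observe that, for \emph{any} consistent extension of $\mathbf{R}$, definability of all recursive functions and essential undecidability come for free: since $\mathbf{R}\vdash\forall y[\psi(\overline{a},y)\leftrightarrow y=\overline{f(a)}]$ for the defining formula $\psi$ of each recursive function $f$, the same biconditional is provable in every $T\supseteq\mathbf{R}$, so the first clause of the theorem is inherited, and essential undecidability then follows from Fact \ref{important fact}. Thus the entire content is to arrange three control properties: (a) no non-recursive set is weakly representable, i.e.\ each set $S_\phi=\{n:T\vdash\phi(\overline{n})\}$ (for $\phi(x)$ with one free variable) is recursive; (b) $T$ is not creative; and (c) $T_P$ is Turing incomplete, so that $\deg(T_P)<0^{\prime}$ (recall $T_P\leq_T0^{\prime}$ automatically, since $T$ is RE).

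A structural observation drives the construction and explains why it cannot be routine: the recursiveness in (a) must be \emph{non-uniform} in $\phi$. Indeed, for any sentence $\sigma$ the formula $\phi(x):=\sigma\wedge x=x$ satisfies $T\vdash\phi(\overline{n})\Leftrightarrow T\vdash\sigma$ for every $n$, so a procedure deciding the sets $S_\phi$ uniformly would decide $T$, contradicting the essential undecidability inherited from $\mathbf{R}$. Hence each $S_\phi$ has to be made recursive separately, stage by stage and requirement by requirement, rather than by any single definition — precisely the setting of a finite-injury priority construction. In effect we must guarantee that $T$ contains no formula that can play the role of the order $x\leq y$ in the sense of Theorem \ref{group 1}, since such a formula would force every RE set to be weakly representable and $T$ to be creative.

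Concretely I would build $T$ in stages as an increasing, consistent family of sentences over $\mathbf{R}$, meeting requirements $N_e$ (the set $S_{\phi_e}$ is recursive, achieved by committing, at each numeral $\overline{n}$, to add $\neg\phi_e(\overline{n})$ whenever this is safe and otherwise recording $n\in S_{\phi_e}$, so the decision function for $S_{\phi_e}$ is read off the stage bookkeeping), $P_e$ (the $e$th candidate productive function fails for the complement of $T_P$: secure a witness $i$ with $W_i$ committed to $\overline{T_P}$ yet $\phi_e(i)\in W_i\cup T_P$), which collectively make $T$ non-creative, and $I_e$ (lowness/incompleteness requirements forcing $0^{\prime}\not\leq_T T_P$). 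The requirements are assigned priorities; a lower-priority $N_e$-commitment is injured only when a higher-priority $P$- or $I$-action places a conflicting sentence into $T_P$, and the usual counting shows each requirement is injured only finitely often, so each $S_{\phi_e}$ is eventually decided recursively.

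The main obstacle is reconciling three demands at once: keeping $T$ consistent, keeping it RE (hence monotone, with no genuine retraction of theorems), and diagonalizing when the safety of an addition — ``does the current finite theory prove $\phi_e(\overline{n})$?'' — is only co-RE and hence undecidable. The heart of the verification is therefore a consistency lemma: $\mathbf{R}$ together with any finite set of numeral-instance decisions not yet refuted remains satisfiable. This uses the weakness of $\mathbf{R}$, which constrains only the atomic diagram of the numerals and bounded instances through $\sf{Ax1}$–$\sf{Ax5}$, so that a model realizing the pending decisions can always be produced. Managing the interaction between these provisional decisions and the lowness requirements — exactly what delivers $\deg(T_P)<0^{\prime}$ — is the delicate priority bookkeeping, and it is here that the finite-injury method does the real work.
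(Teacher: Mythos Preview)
The paper does not prove this theorem; it is cited from Shoenfield and used as a black box to derive corollaries. Your proposal, however, contains a fatal structural error worth naming.

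You build $T$ as a consistent RE extension of $\mathbf{R}$. But the paper has already observed that any such extension is a Rosser theory, hence ${\sf EI}$, hence creative and of Turing degree exactly $0'$. Moreover, $\mathbf{R}$ itself satisfies all the hypotheses of Theorem~\ref{group 1}: axioms {\sf Ax4} and {\sf Ax5} give the required $\leq$, and all recursive functions are definable. So every RE set is weakly representable in any consistent RE extension of $\mathbf{R}$. Each of the three properties you are trying to secure --- non-creativity, no non-recursive weakly representable set, degree strictly below $0'$ --- is therefore \emph{impossible} for an extension of $\mathbf{R}$, no matter how the priority machinery is tuned. The paper in fact draws, as a corollary of Shoenfield's theorem, that $\mathbf{R}$ is not even interpretable in the witnessing $T$.

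You half see the obstacle when you write that $T$ must contain no formula playing the role of $\leq$ in Theorem~\ref{group 1}; but $\mathbf{R}$ already contains such a formula, and no extension can lose it. A correct approach must start from a weaker base --- one that supports definability of recursive functions (a pointwise condition on numeral terms) while omitting the bounded-quantifier apparatus that drives Ehrenfeucht--Feferman. Shoenfield's actual construction does proceed by a priority-style argument, but over such a pared-down theory, not over $\mathbf{R}$.
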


\begin{definition}\label{}
Given theories $S$ and $T$, we say $S$ is \emph{interpretable} in $T$ if there is a mapping from formulas in the language of S to formulas in the language of $T$ such that axioms of $S$ are provable in $T$ under this mapping (for the precise definition, see \cite{Metamathematics of First-Order Arithmetic}). We demand that this mapping commutes with the propositional connectives.
\end{definition}

From Theorem \ref{Shoefield theory}, we have some important corollaries:
\begin{corollary}
The theory $T$ in Theorem \ref{Shoefield theory} has the following properties:
\begin{itemize}
  \item It is not true that any RE set is weakly representable in $T$;
  \item $T$ is ${\sf RI}$, but is not ${\sf EI}$;
  \item $\mathbf{R}$ is not interpretable in $T$.
\end{itemize}
Moreover, we have:
\begin{itemize}
  \item ``Any recursive function is definable in $T$" does not imply ``$T$ is creative".
  \item ``Any recursive function is definable in $T$" does not imply ``any RE set is weakly representable in $T$".
  \item ``$T$ is ${\sf RI}$" does not imply ``$T$ is creative".
  \item ``$T$ is ${\sf RI}$" does not imply ``$T$ is ${\sf EI}$".
  \item ``Any recursive function is definable in $T$" does not imply  ``$\mathbf{R}$ is interpretable in $T$".
\end{itemize}
\end{corollary}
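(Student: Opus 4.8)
The plan is to read off each assertion of the corollary directly from the three properties of $T$ guaranteed by Theorem \ref{Shoefield theory}: every recursive function is definable in $T$, $T$ is not creative, and no non-recursive set is weakly representable in $T$.

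For the first bullet, I would note that the halting set is RE but not recursive; since no non-recursive set is weakly representable in $T$, this RE set is not weakly representable, so it is false that every RE set is weakly representable in $T$. For the ${\sf RI}$ half of the second bullet, I would first upgrade definability to representability: by the footnote to Fact \ref{important fact}, definability of all recursive functions is stronger than strong representability of all recursive sets, so all recursive sets are strongly representable in $T$, and Fact \ref{important fact}(2) then yields that $T$ is ${\sf RI}$. For the remaining half, the Remark above records that ${\sf EI}$ implies creativity; as $T$ is not creative, $T$ cannot be ${\sf EI}$.

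The main work is the third bullet, which I would prove by contradiction. Assume $\mathbf{R}$ is interpretable in $T$ by a translation $\tau$ commuting with the propositional connectives. Then $\mathbf{R}\vdash\phi$ gives $T\vdash\tau(\phi)$, and because $\tau(\neg\phi)=\neg\tau(\phi)$ we also obtain $\mathbf{R}\vdash\neg\phi\Rightarrow T\vdash\neg\tau(\phi)$. Viewing $\tau$ as a recursive function on G\"{o}del numbers, this is precisely a semi-reduction of $(\mathbf{R}_P,\mathbf{R}_R)$ to $(T_P,T_R)$. Since $\mathbf{R}$ is an exact Rosser theory it is ${\sf EI}$, so $(\mathbf{R}_P,\mathbf{R}_R)$ is ${\sf EI}$; Proposition \ref{reduction prop}(2) then forces $(T_P,T_R)$ to be ${\sf EI}$, contradicting the second bullet. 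The delicate step, and the one I expect to require the most care, is verifying that the interpretation really induces a recursive semi-reduction on codes and that it respects negation, which is exactly where the hypothesis that $\tau$ commutes with the connectives enters.

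Finally, the \emph{Moreover} non-implications need no fresh argument: the single theory $T$ simultaneously has all recursive functions definable while failing to be creative, failing to weakly represent every RE set, and failing to interpret $\mathbf{R}$; and it is ${\sf RI}$ while being neither creative nor ${\sf EI}$. Each listed implication is therefore refuted by this one witness once the three bullets are established.
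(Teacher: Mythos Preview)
Your proposal is correct and matches the reasoning the paper implicitly relies on; the paper states the corollary without proof, and your derivations of the three bullets from Fact~\ref{important fact}, the Remark that ${\sf EI}$ implies creativity, and Proposition~\ref{reduction prop}(2) are exactly the intended ones. Your argument for the third bullet is in fact the content of Smullyan's theorem (and its corollary) stated later in Section~3.4, so you are simply unpacking what the paper would have cited.
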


\begin{remark}
It was an open question that ``whether any recursive function is definable in $T$ implies that $\mathbf{R}$ is interpretable in $T$". Emil Je\v{r}\'{a}bek answered this question negatively and  gave a counterexample in \cite{Emil}. In fact, Theorem \ref{Shoefield theory} provides us with a new counterexample.
\end{remark}

\subsection{Smory\'{n}ski's theorem and its application}

In this section, we examine Smory\'{n}ski's theorem about effective inseparability and its application.

\begin{theorem}[Smory\'{n}ski, \cite{Smorynski}]\label{two EI pair}
Let $(A,C)$ and $(B,D)$ be pairs of effective inseparable RE sets with $A\subseteq B$ and $C\subseteq D$. Then there is a recursive function $f$ such that for any $x$, we have:
\[x\in A\Leftrightarrow f(x)\in A\Leftrightarrow f(x)\in B;\]
\[x\in C\Leftrightarrow f(x)\in C\Leftrightarrow f(x)\in D.\]
\end{theorem}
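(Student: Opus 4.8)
The plan is to construct $f$ by a single application of the parametrized strong double recursion theorem (Theorem \ref{thm 2.4}), after first trading effective inseparability for the doubly generative form. By Theorem \ref{equiv of EI}, $(A,C)$ is ${\sf DG}$, so fix a recursive $p(x,y)$ witnessing this; $(B,D)$ is likewise ${\sf EI}$, hence ${\sf DG}$ and ${\sf DU}$. The guiding idea is that $f(x)$ should be the doubly generative value of $(A,C)$ attached to two auxiliary RE sets that I deliberately make \emph{supersets} of $B$ and of $D$, into which the value $f(x)$ is fed exactly when $x$ enters $A$, respectively $C$. Because the auxiliary sets already contain $B\supseteq A$ and $D\supseteq C$, double generativity of $(A,C)$ will make the membership of the single value $f(x)$ in $A$ and in $B$ coincide, and likewise for $C$ and $D$; this is precisely what delivers the two vertical biconditionals $f(x)\in A\Leftrightarrow f(x)\in B$ and $f(x)\in C\Leftrightarrow f(x)\in D$.

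Concretely, I would apply Theorem \ref{thm 2.4} with $g=p$ to the RE relations $M_1(x,y,w)\colon x\in B \vee (y\in A\wedge x=w)$ and $M_2(x,y,w)\colon x\in D \vee (y\in C\wedge x=w)$, obtaining recursive $f_1,f_2$ and setting $f(y)=p(f_1(y),f_2(y))$. Then $W_{f_1(y)}=B\cup(\{f(y)\}\text{ if }y\in A)$ and $W_{f_2(y)}=D\cup(\{f(y)\}\text{ if }y\in C)$, and, provided these two sets are disjoint, doubly generativity gives $f(y)\in A\Leftrightarrow f(y)\in W_{f_1(y)}$ and $f(y)\in C\Leftrightarrow f(y)\in W_{f_2(y)}$. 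Disjointness of $W_{f_1(y)}$ and $W_{f_2(y)}$ is the first thing to verify; it should follow from $B\cap D=\emptyset$ together with $A\subseteq B$, $C\subseteq D$, disjointness of the two pairs, and the fact that the extra point $f(y)$ is added to at most one side (for instance, in the case $y\in A$ one has $f(y)\in A\subseteq B$ and $B\cap D=\emptyset$, so $f(y)\notin D$, keeping the sides disjoint).

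With this in place, the cases $y\in A$ and $y\in C$ fall out directly. If $y\in A$ then $f(y)\in W_{f_1(y)}$, so $f(y)\in A\subseteq B$, and since $f(y)\in A$ forces $f(y)\notin C\cup D$ by the disjointness of the pairs, all four memberships are correct. If $y\in C$ the symmetric computation gives $f(y)\in C\subseteq D$ and, using $C\subseteq D$ with $B\cap D=\emptyset$, also $f(y)\notin A\cup B$. Thus everything reduces to the pure neutral case $y\notin A\cup C$, where $W_{f_1(y)}=B$ and $W_{f_2(y)}=D$, and doubly generativity only yields the coincidences $f(y)\in A\Leftrightarrow f(y)\in B$ and $f(y)\in C\Leftrightarrow f(y)\in D$ without deciding them.

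The hard part is exactly this neutral case: I must force $f(y)\notin B\cup D$ when $y\notin A\cup C$, and here the difficulty is that $f(y)$ is the doubly generative value of $(A,C)$, whereas the only mechanism that controls a value's location with respect to $B$ and $D$ is the effective inseparability of $(B,D)$. The tension is structural — the ``active'' requirement drives $f(y)$ \emph{into} $A$ while the neutral requirement drives it \emph{out} of $B\supseteq A$, and $A$-membership is only semidecidable, so one cannot branch on it — and this is precisely what the self-reference of the strong double recursion theorem is meant to resolve. My plan to close it is to thread the effective inseparability of $(B,D)$ through the \emph{same} fixed point: since $W_{f_1(y)}\supseteq B$ and $W_{f_2(y)}\supseteq D$ are genuine disjoint supersets, one applies the ${\sf EI}$ function of $(B,D)$ to the indices $f_1(y),f_2(y)$ and arranges, via the double recursion, that the value defining $f(y)$ is the one the $(B,D)$ witness expels from $B\cup D$ in the neutral case while still being absorbed into $A$ or $C$ in the active cases. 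Carrying this coordination out so that the fixed point remains consistent, the auxiliary sets stay disjoint, and the two effective-inseparability functions act on a single value without conflict is where I expect essentially all the work to lie; once it is done, combining the neutral case with the two active cases gives the four biconditionals for every $x$.
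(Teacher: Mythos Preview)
The paper does not prove this theorem; it is quoted from Smory\'{n}ski with a bare citation and then used as a black box. So there is no paper proof to compare against, and I can only assess your plan on its own terms.

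Your outline has two genuine gaps, one of which you do not seem to notice.

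The first is that your disjointness verification in the active cases is circular. When $y\in A$ you write ``one has $f(y)\in A\subseteq B$ and $B\cap D=\emptyset$, so $f(y)\notin D$,'' but $f(y)\in A$ is precisely the conclusion you intend to extract from ${\sf DG}$, and ${\sf DG}$ only applies \emph{after} $W_{f_1(y)}\cap W_{f_2(y)}=\emptyset$ is known. Nothing in your construction rules out $f(y)\in D$: if that happens, $W_{f_1(y)}=B\cup\{f(y)\}$ and $W_{f_2(y)}=D$ meet at $f(y)$, the ${\sf DG}$ hypothesis fails, and since $A\subseteq B$ with $B\cap D=\emptyset$ you would actually have $f(y)\notin A$, contrary to what the theorem demands. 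So the active cases are not established either. The phrase ``the extra point $f(y)$ is added to at most one side'' is true but irrelevant; the danger is that the point added to one side already lies in the other side's base set $D$ (or $B$).

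The second gap you identify yourself: in the neutral case $y\notin A\cup C$, ${\sf DG}$ of $(A,C)$ applied to $(B,D)$ gives only the equivalences $f(y)\in A\Leftrightarrow f(y)\in B$ and $f(y)\in C\Leftrightarrow f(y)\in D$, not $f(y)\notin A$. Your proposed remedy --- to feed the indices $f_1(y),f_2(y)$ into the ${\sf EI}$ witness for $(B,D)$ and arrange, inside the same fixed point, that this expelled value coincides with the ${\sf DG}$ value for $(A,C)$ --- is not carried out, and it is not clear it can be. The ${\sf EI}$ witness for $(B,D)$ and the ${\sf DG}$ witness $p$ for $(A,C)$ are different recursive functions that in general output different points; Theorem~\ref{thm 2.4} lets you wire one $g$ into the recursion, not two, and replacing $p$ by a composite that simultaneously behaves like a ${\sf DG}$ witness for $(A,C)$ and an ${\sf EI}$ witness for $(B,D)$ on the same inputs is exactly the content of the theorem you are trying to prove. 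As it stands, the proposal is a statement of intent rather than a proof, and the specific construction you wrote down does not succeed.
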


We show that  the condition ``$A\subseteq B$ and $C\subseteq D$" in Theorem \ref{two EI pair} is necessary. Before the proof, we first introduce some definitions and facts.

\begin{definition}\label{}
We say $\mathbf{Prov}_T(x)$ is a standard provability predicate if it satisfies the following properties:
\begin{description}
  \item[D1] If $T\vdash\phi$, then $T\vdash \mathbf{Prov}_T(\overline{\ulcorner\phi\urcorner})$.
  \item[D2] $T\vdash \mathbf{Prov}_T(\overline{\ulcorner\phi\rightarrow\psi\urcorner})\rightarrow (\mathbf{Prov}_T(\overline{\ulcorner\phi\urcorner})
      \rightarrow \mathbf{Prov}_T(\overline{\ulcorner\psi\urcorner}))$.
  \item[D3] $T\vdash \mathbf{Prov}_T(\overline{\ulcorner\phi\urcorner})\rightarrow \mathbf{Prov}_T(\overline{\ulcorner\mathbf{Prov}_T(\overline{\ulcorner\phi\urcorner})\urcorner})$.
\end{description}
\end{definition}

\begin{theorem}[L\"{o}b's theorem]\label{}
Let $T$ be consistent RE theory. For any sentence $\phi$ and  standard provability predicate $\mathbf{Prov}_T(x)$, we have $T\vdash \mathbf{Prov}_T(\overline{\ulcorner\phi\urcorner})\rightarrow\phi$ if and only if  $T\vdash\phi$.
\end{theorem}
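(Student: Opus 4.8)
The plan is to separate the trivial direction from the substantive one. If $T\vdash\phi$, then $\phi$ is provable, and hence $T\vdash \mathbf{Prov}_T(\overline{\ulcorner\phi\urcorner})\rightarrow\phi$ follows at once, since a theorem is implied by any formula. All the work lies in the converse.

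So I would assume $T\vdash \mathbf{Prov}_T(\overline{\ulcorner\phi\urcorner})\rightarrow\phi$ and aim to conclude $T\vdash\phi$. First I would apply the diagonal (fixed-point) lemma, which is available for any theory with arithmetization, to produce a self-referential sentence $\psi$ with
$$T\vdash \psi \leftrightarrow \big(\mathbf{Prov}_T(\overline{\ulcorner\psi\urcorner})\rightarrow\phi\big).$$
Intuitively $\psi$ asserts ``if I am provable, then $\phi$ holds''; this fixed point is precisely what will let the derivability conditions close a circle back to $\phi$.

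Next I would propagate the forward half $T\vdash \psi\rightarrow(\mathbf{Prov}_T(\overline{\ulcorner\psi\urcorner})\rightarrow\phi)$ through the three conditions. Applying \textbf{D1} gives provability of this implication; two applications of \textbf{D2} distribute the box across both arrows; and \textbf{D3} eliminates the resulting iterated box $\mathbf{Prov}_T(\overline{\ulcorner\mathbf{Prov}_T(\overline{\ulcorner\psi\urcorner})\urcorner})$. The net effect is
$$T\vdash \mathbf{Prov}_T(\overline{\ulcorner\psi\urcorner})\rightarrow \mathbf{Prov}_T(\overline{\ulcorner\phi\urcorner}).$$
Composing with the hypothesis $T\vdash \mathbf{Prov}_T(\overline{\ulcorner\phi\urcorner})\rightarrow\phi$ yields $T\vdash \mathbf{Prov}_T(\overline{\ulcorner\psi\urcorner})\rightarrow\phi$, which is exactly the right-hand side of the fixed point; the backward half of the equivalence then gives $T\vdash\psi$. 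Finally \textbf{D1} turns this into $T\vdash\mathbf{Prov}_T(\overline{\ulcorner\psi\urcorner})$, and modus ponens with the previous implication delivers $T\vdash\phi$.

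The main obstacle is the modal bookkeeping in the middle step: one must apply \textbf{D2} twice and \textbf{D3} once in exactly the right order so that the doubly-boxed occurrence of $\psi$ is discharged rather than left dangling. No genuinely new idea beyond the fixed-point construction is required, but the sequence of box manipulations must be carried out precisely, and the only nontrivial external input is the diagonal lemma, which is standard for theories with arithmetization.
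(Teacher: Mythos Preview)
The paper does not actually prove L\"{o}b's theorem; it merely states it as a known result and then applies it in the proof of the next theorem. Your proposal is the standard textbook argument via the diagonal lemma and the Hilbert--Bernays--L\"{o}b derivability conditions, and it is correct.

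One small caveat worth flagging: you invoke the diagonal lemma as ``available for any theory with arithmetization'', but strictly speaking one needs enough representability of recursive functions (in particular the substitution function) inside $T$ for the fixed-point construction to go through. The paper's statement of L\"{o}b's theorem is silent on this hypothesis, and the surrounding context implicitly assumes $T$ extends something like $\mathbf{R}$, so your assumption is reasonable; but in a fully self-contained proof you would want to state explicitly that $T$ has enough arithmetic for diagonalization, not merely that a standard provability predicate satisfying \textbf{D1}--\textbf{D3} exists.
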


\begin{fact}[Folklore]\label{EI ex}
For any formula $\theta(x)$, the pair $(Fix_T(\theta), Fix_T(\neg\theta))$ is ${\sf EI}$ where $Fix_T(\theta)=\{\phi: T\vdash \phi\leftrightarrow \theta(\overline{\ulcorner\phi\urcorner})\}$.
\end{fact}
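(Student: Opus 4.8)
The plan is to show the pair is ${\sf EI}$ by verifying the equivalent property semi-${\sf DU}$ from Theorem \ref{equiv of EI}; that is, I will show that every disjoint pair of RE sets semi-reduces to $(Fix_T(\theta),Fix_T(\neg\theta))$. First I record the two routine structural facts. Both sets are RE, since provability in $T$ is RE and $\ulcorner\phi\urcorner\mapsto\ulcorner\phi\leftrightarrow\theta(\overline{\ulcorner\phi\urcorner})\urcorner$ is a recursive substitution; and they are disjoint, since a common member $\phi$ would give $T\vdash\theta(\overline{\ulcorner\phi\urcorner})\leftrightarrow\neg\theta(\overline{\ulcorner\phi\urcorner})$, contradicting the consistency of $T$. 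As throughout this section, I assume $T$ is strong enough to carry the diagonal lemma and Rosser separation, e.g.\ $T$ is a consistent RE extension of $\mathbf{R}$.

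Fix an arbitrary disjoint pair $(A,B)$ of RE sets. Since such a $T$ is a Rosser theory, $(A,B)$ is separable in $T$: there is a formula $\delta(x)$ with $x\in A\Rightarrow T\vdash\delta(\overline{x})$ and $x\in B\Rightarrow T\vdash\neg\delta(\overline{x})$. Applying the diagonal lemma uniformly in the parameter $x$, I obtain a recursive function $r$ so that for every $x$ the sentence $\psi_x$ coded by $r(x)$ satisfies
\[T\vdash\psi_x\leftrightarrow\big[(\delta(\overline{x})\rightarrow\theta(\overline{r(x)}))\wedge(\neg\delta(\overline{x})\rightarrow\neg\theta(\overline{r(x)}))\big].\]
If $x\in A$ then $T\vdash\delta(\overline{x})$, so the first conjunct is $T$-provably equivalent to $\theta(\overline{r(x)})$ and the second is $T$-provable; hence $T\vdash\psi_x\leftrightarrow\theta(\overline{\ulcorner\psi_x\urcorner})$, i.e.\ $r(x)\in Fix_T(\theta)$. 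Symmetrically, if $x\in B$ then $T\vdash\neg\delta(\overline{x})$ forces $T\vdash\psi_x\leftrightarrow\neg\theta(\overline{\ulcorner\psi_x\urcorner})$, so $r(x)\in Fix_T(\neg\theta)$. Thus $r$ semi-reduces $(A,B)$ to $(Fix_T(\theta),Fix_T(\neg\theta))$; as $(A,B)$ was arbitrary, the latter is semi-${\sf DU}$ and therefore ${\sf EI}$ by Theorem \ref{equiv of EI}.

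The main obstacle is conceptual rather than computational: one must notice that Rosser separability is exactly the ingredient that drives the reduction, and one must keep the fixed-point construction uniform and effective so that $r$ is genuinely recursive (this is the parametrized form of the diagonal lemma). The reliance on $T$ being a Rosser theory is essential, since for an arbitrary consistent RE $T$ with no arithmetic content the sets $Fix_T(\theta)$ can be trivial; so the standing assumption that $T$ interprets enough arithmetic cannot be dropped. An alternative, more self-contained route would avoid citing the Rosser property: using witness-comparison predicates $\alpha_{i,j},\beta_{i,j}$ expressing ``the argument enters $W_i$ before $W_j$'' and conversely, one builds via the diagonal lemma a sentence whose status as a fixed point of $\theta$ or of $\neg\theta$ is governed by which of $W_i,W_j$ its own code enters first, and then checks directly that the resulting recursive function witnesses ${\sf EI}$. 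This reproves the separation step by hand but has the advantage of producing the ${\sf EI}$ function explicitly.
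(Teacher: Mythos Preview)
The paper records this as a folklore fact and gives no proof, so there is nothing to compare against directly. Your argument is correct: after the routine checks that the two sets are RE and disjoint, you separate an arbitrary disjoint RE pair $(A,B)$ in $T$ via a formula $\delta$, then use the parametrized diagonal lemma to produce $\psi_x$ with $T\vdash\psi_x\leftrightarrow(\delta(\overline{x})\leftrightarrow\theta(\overline{\ulcorner\psi_x\urcorner}))$ (your conjunction is propositionally equivalent to this biconditional), which gives the semi-reduction and hence ${\sf EI}$ via semi-${\sf DU}$ and Theorem~\ref{equiv of EI}. Your explicit acknowledgement that $T$ must carry enough arithmetic (e.g., be a consistent RE extension of $\mathbf{R}$) is appropriate---the paper leaves this implicit but uses exactly that standing assumption in the surrounding subsection---and the alternative witness-comparison route you sketch is the standard way the folklore argument is usually presented.
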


\begin{theorem}\label{}
The condition ``$A\subseteq B$ and $C\subseteq D$" in Theorem \ref{two EI pair} cannot be dropped.
\end{theorem}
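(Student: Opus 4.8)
The plan is to violate the containment hypothesis by taking a single ${\sf EI}$ pair and swapping its two components. Concretely, fix any disjoint pair $(A,C)$ of RE sets that is ${\sf EI}$ and whose components are both nonempty; for instance one may take $(A,C)=(T_P,T_R)$ to be the nuclei of a consistent ${\sf EI}$ theory such as $\mathbf{PA}$, which are nonempty since $\mathbf{PA}$ both proves and refutes some sentence. (More generally, every ${\sf EI}$ pair has creative, and hence infinite, components.) I then set $(B,D)=(C,A)$ and show that these two pairs satisfy every hypothesis of Theorem \ref{two EI pair} except the containment, yet admit no function $f$ as in its conclusion.

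First I would record that $(C,A)$ is again ${\sf EI}$: if a recursive $g(i,j)$ witnesses effective inseparability of $(A,C)$, then $g(j,i)$ witnesses it for $(C,A)$. Thus $(A,C)$ and $(B,D)=(C,A)$ are both ${\sf EI}$ pairs of RE sets. Moreover, since $A\neq\emptyset$ and $A\cap C=\emptyset$ we have $A\not\subseteq C=B$, so the condition ``$A\subseteq B$ and $C\subseteq D$'' genuinely fails for this choice.

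It then remains to show that the conclusion of Theorem \ref{two EI pair} fails. Suppose toward a contradiction that there is a recursive $f$ such that, for all $x$,
\[x\in A\Leftrightarrow f(x)\in A\Leftrightarrow f(x)\in B,\qquad x\in C\Leftrightarrow f(x)\in C\Leftrightarrow f(x)\in D.\]
Substituting $B=C$ in the first chain gives $f(x)\in A\Leftrightarrow f(x)\in C$ for every $x$; since $A\cap C=\emptyset$, no point can lie in both, so this equivalence forces $f(x)\notin A$ for all $x$. But then the same chain yields $x\in A\Leftrightarrow f(x)\in A$, which is false for every $x$, whence $A=\emptyset$ --- contradicting the choice of a nonempty $A$. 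Therefore no such recursive $f$ exists, and the containment hypothesis cannot be dropped.

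There is no serious obstacle in this argument; the only point requiring care is the choice of the witnessing pair, namely guaranteeing that $A$ (equivalently $C$) is nonempty, since the collapse of the biconditional chain uses $A\neq\emptyset$ in an essential way. This nonemptiness is automatic because the components of any ${\sf EI}$ pair are creative and thus infinite, and it is in any case immediate for the concrete nuclei of $\mathbf{PA}$.
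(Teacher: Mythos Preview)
Your proof is correct and considerably more elementary than the paper's. The paper proceeds by applying the putative unconditioned version of Theorem~\ref{two EI pair} to the pairs $(T_P,T_R)$ and $(Fix_T(\neg\mathbf{Prov}_T),Fix_T(\mathbf{Prov}_T))$, and then invokes L\"{o}b's theorem to collapse the resulting biconditionals into the contradiction $T\vdash\phi\Leftrightarrow T\vdash\neg\phi$. Your argument instead exploits the simple observation that swapping the two components of a single ${\sf EI}$ pair already destroys the containment while keeping both pairs ${\sf EI}$, and the disjointness of $A$ and $C$ then makes the biconditional $f(x)\in A\Leftrightarrow f(x)\in C$ force $A=\emptyset$. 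Your route avoids fixed-point sets and L\"{o}b's theorem entirely, relying only on facts already recorded in the paper (symmetry of ${\sf EI}$ and creativity of the components); the paper's route, while heavier, does illustrate how the containment hypothesis can fail even for two pairs arising naturally from the same theory, which is perhaps of independent interest but not needed for the bare statement.
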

\begin{proof}\label{}

Suppose that the condition ``$A\subseteq B$ and $C\subseteq D$" in Theorem \ref{two EI pair} can be dropped. Let $T$ be a consistent RE extension of $\mathbf{R}$. Note that  $(T_P, T_R)$ is ${\sf EI}$, and  $(Fix_T(\theta), Fix_T(\neg\theta))$ is ${\sf EI}$ for any formula $\theta(x)$ (by Fact \ref{EI ex}).
Apply Theorem \ref{two EI pair} to $(T_P, T_R)$ and $(Fix_T(\theta), Fix_T(\neg\theta))$ where $\theta=\neg \mathbf{Prov}_T(x)$.
Then there is a recursive function $f$ such that for any sentence $\phi$, we have:

\begin{align}
T\vdash\phi\Leftrightarrow T\vdash f(\phi)\Leftrightarrow T\vdash f(\phi)\leftrightarrow \neg \mathbf{Prov}_T(\overline{\ulcorner f(\phi)\urcorner});\label{first line}\\
T\vdash\neg\phi\Leftrightarrow T\vdash \neg f(\phi)\Leftrightarrow T\vdash f(\phi)\leftrightarrow  \mathbf{Prov}_T(\overline{\ulcorner f(\phi)\urcorner})\label{second line}.
\end{align}

By L\"{o}b's theorem, we have $T\vdash f(\phi)\leftrightarrow  \mathbf{Prov}_T(\overline{\ulcorner f(\phi)\urcorner})$ if and only if   $T\vdash f(\phi)$. By (\ref{second line}), we have $T\vdash\neg\phi\Leftrightarrow T\vdash f(\phi)$.
By (\ref{first line}) and (\ref{second line}), we have $T\vdash\phi\Leftrightarrow T\vdash\neg\phi$ for any sentence $\phi$, which leads to a contradiction.

\end{proof}

\begin{theorem}\label{}

Let $T$ be a Rosser  theory. Let $(A,B)$ be any ${\sf EI}$ pair of RE sets. Suppose $(A,B)$ is separable in $T$ by the formula $\phi(x)$. Define $f: n\mapsto \ulcorner\phi(\overline{n})\urcorner$. Then there is a recursive function $g$ such that for any $n\in\omega$:
\begin{enumerate}[(1)]
  \item $n\in f[A]\Leftrightarrow g(n)\in f[A]\Leftrightarrow g(n)\in T_P$;
  \item $n\in f[B]\Leftrightarrow g(n)\in f[B]\Leftrightarrow g(n)\in T_R$.
\end{enumerate}
\end{theorem}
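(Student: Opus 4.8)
The plan is to recognise the statement as a direct instance of Smory\'{n}ski's theorem (Theorem \ref{two EI pair}), applied to the two ${\sf EI}$ pairs $(f[A], f[B])$ and $(T_P, T_R)$, taken as the ``smaller'' and ``larger'' pair respectively. Thus the whole argument reduces to checking the three hypotheses of that theorem: that both pairs are disjoint ${\sf EI}$ pairs of RE sets, and that the inclusions $f[A]\subseteq T_P$ and $f[B]\subseteq T_R$ hold. Once these are in place, the conclusion is read off verbatim.

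First I would record the elementary properties of $f$. Since the G\"{o}del numbering is bijective and the numerals $\overline{n}$ are syntactically distinct for distinct $n$, the map $n\mapsto\ulcorner\phi(\overline{n})\urcorner$ is a recursive injection; hence $f[A]$ and $f[B]$ are RE, and injectivity together with $A\cap B=\emptyset$ gives $f[A]\cap f[B]=\emptyset$, so $(f[A], f[B])$ is a disjoint pair of RE sets. Trivially $n\in A\Rightarrow f(n)\in f[A]$ and $n\in B\Rightarrow f(n)\in f[B]$, so $(A,B)$ is semi-reducible to $(f[A], f[B])$ via $f$; since $(A,B)$ is ${\sf EI}$, Proposition \ref{reduction prop}(2) then yields that $(f[A], f[B])$ is ${\sf EI}$. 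For the larger pair, $T$ is a Rosser theory, hence ${\sf EI}$, so $(T_P, T_R)$ is a disjoint ${\sf EI}$ pair of RE sets. Finally, the hypothesis that $(A,B)$ is separable in $T$ by $\phi(x)$ states exactly that $n\in A\Rightarrow T\vdash\phi(\overline{n})$ and $n\in B\Rightarrow T\vdash\neg\phi(\overline{n})$, i.e. $f[A]\subseteq T_P$ and $f[B]\subseteq T_R$, which are precisely the inclusions demanded by Theorem \ref{two EI pair}.

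It then remains to apply Smory\'{n}ski's theorem with smaller pair $(f[A], f[B])$ and larger pair $(T_P, T_R)$, producing a recursive function $g$ satisfying $n\in f[A]\Leftrightarrow g(n)\in f[A]\Leftrightarrow g(n)\in T_P$ and $n\in f[B]\Leftrightarrow g(n)\in f[B]\Leftrightarrow g(n)\in T_R$ for all $n$, which is exactly the asserted conclusion. The only step requiring genuine care is the verification that the image pair $(f[A], f[B])$ is ${\sf EI}$, and this is handled cleanly by the reduction machinery of Proposition \ref{reduction prop}(2) together with the injectivity of $f$; beyond correctly matching the inclusions $f[A]\subseteq T_P$, $f[B]\subseteq T_R$ to the hypotheses of Theorem \ref{two EI pair}, I expect no real obstacle.
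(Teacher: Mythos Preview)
Your proposal is correct and follows essentially the same route as the paper: verify that $(f[A],f[B])$ is an ${\sf EI}$ pair via semi-reducibility and Proposition~\ref{reduction prop}(2), note that $(T_P,T_R)$ is ${\sf EI}$ because $T$ is Rosser, check the inclusions $f[A]\subseteq T_P$ and $f[B]\subseteq T_R$, and then invoke Smory\'{n}ski's Theorem~\ref{two EI pair}. Your write-up is in fact slightly more careful than the paper's, since you explicitly address disjointness of $f[A]$ and $f[B]$ (which also follows immediately from $f[A]\subseteq T_P$, $f[B]\subseteq T_R$ and the consistency of $T$, without needing injectivity of $f$).
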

\begin{proof}\label{}
Note that $f$ is recursive, and if $n\in A$, then $f(n)\in T_P$, and if $n\in B$, then $f(n)\in T_R$. Since $(A,B)$ is semi-reducible to $(f[A],f[B])$ and $(A,B)$ is ${\sf EI}$, by Proposition \ref{reduction prop}, $(f[A],f[B])$ is ${\sf EI}$. Since $T$ is a Rosser  theory, $(T_P,T_R)$ is ${\sf EI}$. Note that $f[A]\subseteq T_P$ and $f[B]\subseteq T_R$. Apply Theorem \ref{two EI pair} to  $(f[A],f[B])$ and $(T_P,T_R)$. Then there is a recursive function $g$ such that for any $n\in\omega$, we have:
\[n\in f[A]\Leftrightarrow g(n)\in f[A]\Leftrightarrow g(n)\in T_P;\]
\[n\in f[B]\Leftrightarrow g(n)\in f[B]\Leftrightarrow g(n)\in T_R.\]
\end{proof}

\subsection{Shoenfield's theorems and their applications}

Shoenfield's theorems in \cite{Shoenfield61} are an important tool in the proof of Theorem \ref{RI COM}. In Theorem \ref{Shoenfield thm1} and Theorem \ref{Shoenfield thm2}, we give detailed proofs of Shoenfield's theorems.
We say that a set $D$ separates $B$ and $C$ if $B\subseteq D$ and $D\cap C=\emptyset$.

\begin{theorem}[Shoenfield, \cite{Shoenfield}]\label{Shoenfield thm1}
For any RE set $A$, there is a disjoint pair $(B, C)$ of RE sets such that $B, C\leq_T \, A$, and for any RE set $D$ that separates $B$ and $C$,
we have $A\leq_T \, D$.
If $A$ is a non-recursive RE set, then $(B, C)$ is  recursively inseparable.
\end{theorem}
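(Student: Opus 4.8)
The plan is to build the pair $(B,C)$ by a stagewise, $A$-permitting construction that simultaneously codes $A$ into every recursively enumerable separator. First I would dispose of two preliminary points. The recursive inseparability clause is a free corollary of the main reducibility statement: a recursive separator $D$ is in particular RE, so the conclusion $A\leq_T D$ would force $A$ to be recursive; hence when $A$ is non-recursive no recursive separator can exist, i.e. $(B,C)$ is ${\sf RI}$. Likewise the case of recursive $A$ is trivial---take $B=A$ and $C=\mathbb{N}\setminus A$ (both recursive, hence RE and $\leq_T A$), and the only separator of this pair is $A$ itself, so $A\leq_T D$ holds vacuously. Thus I may assume $A$ is non-recursive. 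Note also that, since $B$ is itself a separator, the demands $B\leq_T A$ and $A\leq_T B$ force $B\equiv_T A$ (and similarly $C\equiv_T A$); so I am really building an ${\sf RI}$ pair of RE sets of the same Turing degree as $A$, which already signals that an $A$-permitting construction is appropriate.

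For the construction I would fix recursive enumerations $\{A_s\}$ of $A$ and $\{W_{e,s}\}$ of all RE sets. For each index $e$ and each $n$ I reserve an infinite recursive block of fresh locations, with a distinguished current ``marker'' $\ell_{e,n}$. The intention attached to block $(e,n)$ is to guarantee, for every $D=W_e$ that happens to separate $(B,C)$, that the final marker satisfies $D(\ell_{e,n})=A(n)$; reading off this single value then yields $A\leq_m D$, with the reduction depending only on an index $e$ of $D$. The basic moves are: (a) if $n$ enters $A$, put the current marker into $B$, which forces $\ell_{e,n}\in D$ for any separator, matching $A(n)=1$; (b) if instead $W_e$ enumerates the current marker while $n$ is still outside $A$, then $W_e$ is threatening to make $D(\ell_{e,n})=1$ when we want $0$, and I respond---but only when $A$ grants permission---either by putting that marker into $C$ (which places an element of $W_e$ into $C$ and so certifies that $W_e$ is not a separator, discharging the requirement) or by advancing the marker to a fresh location. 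The permitting discipline, whereby enumerations into $B$ and $C$ are made only in response to a bounded change in $A$, is exactly what keeps $B,C\leq_T A$.

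The verification then splits into three obligations. Disjointness and recursive enumerability are immediate from the effectivity of the construction and the rule that each location is committed to at most one of $B,C$. The inequalities $B,C\leq_T A$ come from the permitting analysis: membership of a location is decided once $A$ has settled below the relevant use, so the characteristic functions of $B$ and $C$ are computable from $A$. The main content is the requirement analysis: for each $e$ such that $W_e$ \emph{does} separate $(B,C)$, I must show that every marker $\ell_{e,n}$ reaches a final position with $W_e(\ell_{e,n})=A(n)$. If $n\in A$ the marker is driven into $B\subseteq W_e$, giving value $1$; if $n\notin A$, a separating $W_e$ can never enumerate the final marker (doing so would, upon permission, have put that element into $C\cap W_e$, contradicting separation), so the value is $0$; and one argues that the marker moves only finitely often. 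Combining the cases gives the computable reduction $n\mapsto\ell_{e,n}$ witnessing $A\leq_T D$.

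The hard part is reconciling the two pulls on the construction. To obtain $A\leq_T D$ one must force a separator to reveal the \emph{co-RE} fact ``$n\notin A$'' in a finitely verifiable way; a naive static coding would require an element to enter $C$ exactly when $n\notin A$, which is impossible for an RE set, since it would make a co-RE set RE. The escape is that $D$ ranges only over \emph{RE} sets, so the decisive placement can be made \emph{reactive} to the enumeration of $W_e$ rather than to the non-membership of $n$ in $A$. But reactivity to $W_e$ threatens $B,C\leq_T A$, because whether $W_e$ ever acts is itself only RE and not $A$-recursive. The movable marker together with $A$-permitting is precisely the device that resolves this tension: permission ties every irreversible placement to an $A$-change (securing $B,C\leq_T A$), while the option of moving the marker lets the strategy wait for permission without prematurely conceding to a ``chasing'' $W_e$. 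Getting this balance right---so that separating indices are served while $B,C$ stay $A$-computable---is the crux of the argument, and it is where the non-recursiveness of $A$ is used, via the permitting requirement, to guarantee that the markers eventually settle.
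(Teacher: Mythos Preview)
Your permitting-with-movable-markers plan is a very different route from the paper's, and it leaves unaddressed exactly the step you call ``the crux.'' The paper's proof is entirely direct and stageless: with $A=W_e$, one sets
\[
x\in B\Longleftrightarrow \exists y\bigl[T_1(e,(x)_0,y)\wedge\forall z\le y\,\neg T_1((x)_1,x,z)\bigr],
\]
\[
x\in C\Longleftrightarrow \exists y\bigl[T_1(e,(x)_0,y)\wedge\exists z\le y\,T_1((x)_1,x,z)\bigr],
\]
so that $x\in B\cup C$ iff $(x)_0\in A$, with membership in $B$ versus $C$ decided by a single witness comparison between the enumerations of $A$ and of $W_{(x)_1}$. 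This gives $B,C\le_T A$ immediately. For a separator $D=W_n$, the reduction uses the \emph{fixed} coding point $\langle x,n\rangle$: one checks that if $x\in A$ then $\langle x,n\rangle\in B$ (it cannot be in $C$, since by definition that would place $\langle x,n\rangle$ in $W_n\cap C=D\cap C=\emptyset$), so a single $D$-query at $\langle x,n\rangle$ followed by inspection of the halting stage decides $A(x)$. No markers, no stages, no permitting, and the construction is uniform in $e$---in particular it does not split off the recursive case.

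Your sketch, by contrast, has a real gap in the case $n\notin A$. You argue that a separating $W_e$ cannot enumerate the final marker because ``upon permission'' you would then put it into $C\cap W_e$; but permission is an $A$-change below the marker's position, and nothing prevents $A$ from having already stabilized below that position before $W_e$ acts. In that scenario no permission ever arrives, you cannot place the marker into $C$, and it sits in $W_e\setminus(B\cup C)$---perfectly consistent with $W_e$ separating $(B,C)$, yet giving $D(\ell_{e,n})=1\ne 0=A(n)$, so the coding fails. Invoking non-recursiveness of $A$ only tells you $A$ is infinite, not that a fresh element appears below any particular bound after any particular stage. If instead you allow the marker to advance without permission, a $W_e$ that chases every new position drives the marker to infinity and there is no final value to query; your assertion that ``the marker moves only finitely often'' is precisely what needs an argument and none is supplied. (A minor side point: $C$ is not a separator of $(B,C)$ since $B\not\subseteq C$, and $\overline C$ need not be RE, so your ``similarly $C\equiv_T A$'' is not justified by the reason you give, though it does follow from the evident $B\leftrightarrow C$ symmetry of the construction.)
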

\begin{proof}\label{}
Suppose $A=W_e$. Define
\[x\in B\Leftrightarrow \exists y [T_1(e, (x)_0, y)\wedge\forall z\leq y \neg T_1((x)_1, x, z)]\] and
\[x\in C\Leftrightarrow \exists y [T_1(e, (x)_0, y)\wedge\exists z\leq y \, T_1((x)_1, x, z)].\]
We show that $(B, C)$ has the stated properties.

Note that $B$ and $C$ are disjoint RE sets. If $(x)_0 \notin A$, then $x\notin B$ and $x \notin C$; if $(x)_0 \in A$, then we can decide either $x\in B$ or $x \in C$. Thus, we have $B,C \leq_T A$.
\smallskip

Suppose $D$ is a RE set with index $n$,  and $B\subseteq D$ and $D\cap C=\emptyset$. We show that $A\leq_T D$.
\begin{claim}~
$x\in  A \Leftrightarrow \exists z (T_1(n, \langle x, n\rangle, z)\wedge \exists y<z \, T_1(e,x,y))$.
\end{claim}
\begin{proof}\label{}
The right-to-left direction  is obvious. Now we show the left-to-right direction. Suppose $x\in A$. Then either $\langle x,n\rangle\in B$ or $\langle x,n\rangle\in C$. Suppose $\langle x,n\rangle\in C$. Let $y$ be the unique witness such that $T_1(e, x,y)$ holds. Then there exists  $z\leq y$ such that $T_1(n, \langle x,n\rangle, z)$. Then $\langle x,n\rangle\in D$, which contradicts that $D\cap C=\emptyset$. Thus we have $\langle x,n\rangle\in B$.

Let $y$ be the unique witness such that $T_1(e, x, y)$ holds.  Since $\langle x,n\rangle\in D$, we have $T_1(n,\langle x,n\rangle, z)$ holds for some $z$. Since for all $z\leq y$, $\neg T_1(n, \langle x,n\rangle, z)$ holds, we have $z>y$. Thus, we have $\exists z (T_1(n, \langle x,n\rangle, z)\wedge \exists y<z \, T_1(e,x,y))$.
This ends the proof of the claim.
\end{proof}

Now we show that $A\leq_T D$. If $\langle x,n\rangle\notin D$, then $x\notin A$. If $\langle x,n\rangle\in D$, from the above claim, we can effectively decide whether $x\in A$.

We show if $A$ is non-recursive, then ($B, C)$ is recursively inseparable: if $X$ is a recursive set separating $B$ and $C$, then $A\leq_T X$ and hence $A$ is recursive, that leads to a contradiction.
\end{proof}

\begin{theorem}[Shoenfield, \cite{Shoenfield}]\label{Shoenfield thm2}
For any RE set $A$, there is a consistent RE theory $T$ having one non-logical symbol that has the same Turing degree as $A$, and $T$ is essentially undecidable if $A$ is not recursive.
\end{theorem}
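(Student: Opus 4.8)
The plan is to feed the recursively inseparable pair produced by Theorem \ref{Shoenfield thm1} into a syntactically trivial theory whose decision problem is \emph{exactly} the separation problem for that pair. First I would apply Theorem \ref{Shoenfield thm1} to $A$ to obtain a disjoint pair $(B,C)$ of RE sets with $B,C\le_T A$, such that every RE separator $D$ of $(B,C)$ satisfies $A\le_T D$, and with $(B,C)$ recursively inseparable whenever $A$ is non-recursive.

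Next I would build $T$ in a language containing a single unary predicate $P$ (with $\mathbf{0}$ and $\mathbf{S}$ supplying the numerals $\overline{n}$), axiomatized by the distinctness axioms $\{\overline{m}\neq\overline{n}: m\neq n\}$ together with $\{P(\overline{n}): n\in B\}\cup\{\neg P(\overline{n}): n\in C\}$. This axiom set is RE because $B$ and $C$ are, and $T$ is consistent because $B\cap C=\emptyset$: interpret the numerals as distinct points of an infinite domain and $P$ as $B$. A direct model construction shows $T\vdash P(\overline{n})\Leftrightarrow n\in B$ and $T\vdash\neg P(\overline{n})\Leftrightarrow n\in C$, so the recursive map $n\mapsto\ulcorner P(\overline{n})\urcorner$ witnesses $B\le_T T_P$; since $B$ is itself an RE separator of $(B,C)$, Theorem \ref{Shoenfield thm1} yields $A\le_T B\le_T T_P$.

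For the reverse inequality $T_P\le_T A$, which I expect to be the main obstacle, the point is that provability in $T$, a priori only RE, is in fact decidable relative to $A$. The theory of an infinite domain equipped with one unary predicate and finitely many named distinct constants admits quantifier elimination, so whether $T\vdash\phi$ depends only on the finitely many numerals $\overline{n_1},\dots,\overline{n_k}$ occurring in $\phi$ and, for each of them, on whether it lies in $B$, in $C$, or in neither, with all remaining combinatorics over the generic part of the model being decidable. Since $B,C\le_T A$, these finitely many membership queries are answerable from the oracle $A$, giving $T_P\le_T A$ and hence $T_P\equiv_T A$.

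Finally, for essential undecidability assume $A$ is non-recursive and let $T'$ be any consistent RE extension of $T$. The set $D=\{n: T'\vdash P(\overline{n})\}$ is RE, contains $B$ (as $T\subseteq T'$), and is disjoint from $C$ (if $n\in C$ then $T'\vdash\neg P(\overline{n})$, so consistency forbids $n\in D$); thus $D$ separates $(B,C)$. Were $T'$ decidable, $D$ would be a recursive separator, contradicting the recursive inseparability of $(B,C)$. Hence every consistent RE extension of $T$ is undecidable. The requirement of a single non-logical symbol is then met by the standard reduction of a finite signature to one binary relation symbol, a coding that preserves both the Turing degree of the theorem set and essential undecidability.
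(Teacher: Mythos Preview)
Your overall strategy---feed the pair $(B,C)$ from Theorem~\ref{Shoenfield thm1} into a custom theory whose decision problem mirrors the separation problem---matches the paper's, but the specific theory you build does not do the job, and the gap is exactly at the step you flagged as ``the main obstacle'': the reduction $T_P\le_T A$.

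You invoke quantifier elimination for ``an infinite domain equipped with one unary predicate and finitely many named distinct constants,'' but your language is \emph{not} of that form: it contains the unary \emph{function} symbol $\mathbf{S}$, and the numerals $\overline{n}$ are closed terms, not constants in the signature. The function symbol lets sentences quantify implicitly over \emph{all} numerals. For instance, $\exists x\,(P(x)\wedge P(\mathbf{S}(x)))$ mentions no specific numeral, yet one checks (using the model with domain $\mathbb{N}$, $\mathbf{S}$ the successor, and $P=B$) that $T$ proves it iff $B$ contains two consecutive integers; more generally $T\vdash\exists x\,\bigwedge_{i\le k}P(\mathbf{S}^i(x))$ iff $B$ contains a run of length $k{+}1$. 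So provability of such sentences does \emph{not} reduce to finitely many membership queries ``$n_j\in B$?'', ``$n_j\in C$?'' about numerals occurring in the sentence, and your decidability-relative-to-$A$ argument collapses. Whether your $T_P$ nonetheless satisfies $T_P\le_T A$ would require a genuine analysis of the first-order theory of one unary function together with a unary predicate and a constant; your sketch does not supply one. (Your final signature-reduction step is also only a promissory note, but the QE gap is the real problem.)

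The paper sidesteps all of this by working with a single binary relation $R$ from the outset: $R$ is declared an equivalence relation, and ``numeral $n$'' is encoded by the sentence $\Phi_n$ asserting the existence of an equivalence class of size exactly $n{+}1$. After adding uniqueness axioms $\Psi_n$ and richness axioms $\Upsilon_n$, Janiczak's Lemma gives an \emph{effective} reduction of every sentence to a Boolean combination of finitely many $\Phi_n$'s, and the $\Phi_n$'s are genuinely independent atoms whose status is fixed by membership in $B$, in $C$, or neither---each an $A$-recursive query. That is precisely the elimination you wanted but cannot get with a function symbol present.
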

\begin{proof}\label{}
Pick the  pair $\langle B, C\rangle$ of RE sets from $A$ as constructed in Theorem \ref{Shoenfield thm1}. Now we define the theory $T_{(B,C)}$ with $L(T_{(B,C)})=\{R\}$ where $R$ is a binary relation symbol.

Let $\Phi_n$ be the statement that there exists an equivalence class of $R$ of size precisely  $n+1$. Let $\Psi_n$ be the statement that
there is at most one equivalence class of $R$ of size precisely  $n$. Let $\Upsilon_n$ be the statement that there are at least $n$ equivalence class of $R$ with at least   $n$ elements\footnote{As Visser pointed out, we include this axiom just to make the proof of Lemma \ref{Janiczak's Lemma} more easy.}.

We denote  the theory $T_{(B,C)}$ by $T$, which contains the following axioms:
\begin{enumerate}[(1)]
  \item the axiom asserting that $R$ is an equivalence relation;
\item $\Psi_n$ for each $n\in\omega$;
\item $\Upsilon_n$ for each $n\in\omega$;
  \item $\Phi_n$ for all $n\in B$;
  \item $\neg\Phi_n$ for all $n\in C$.
\end{enumerate}

Clearly, $T$ is a consistent RE theory. Since $\Phi_n$ is
provable iff $n\in B$, and $\neg\Phi_n$ is provable iff $n \in C$, we have
$B$ and $C$ are recursive in $T$.\smallskip

Lemma \ref{Janiczak's Lemma} is a reformulation of Janiczak's Lemma 2 in \cite{Janiczak} in the context of the theory $T$. Janiczak's Lemma is proved by means of a method known as the elimination of quantifiers.

\begin{lemma}[Janiczak, Lemma 2 in \cite{Janiczak}]\label{Janiczak's Lemma}
Over $T$, every sentence is equivalent to a boolean combination of the $\Phi_n$, and  this boolean combination  can be found explicitly from the given sentence.
\end{lemma}

By Lemma \ref{Janiczak's Lemma}, $T$ is
recursive in $B$ and $C$. Hence $T$ is recursive in $A$ since $B, C\leq_T A$. Since $B$ separates $B$ and $C$,  by Theorem \ref{Shoenfield thm1}, $A\leq_T \, B$. Thus, since $B$ is recursive in $T$, $T$ has the same Turing degree as $A$.
\smallskip

By a standard argument, we can  show that $T$ is essentially undecidable if $A$ is not recursive.
\end{proof}

\begin{remark}
The proof of Theorem \ref{Shoenfield thm2} essentially shows that: given a disjoint pair $(B,C)$ of RE sets, there is a RE theory $T_{(B,C)}$ such that $T_{(B,C)}\equiv_T (B,C)$ and if $(B,C)$ is recursively inseparable, then $T_{(B,C)}$ is essentially undecidable.
\end{remark}

\begin{definition}[\cite{Rogers}]~\label{}
\begin{enumerate}[(1)]
  \item Given $A,B\subseteq \mathbb{N}$, we say $A$ is \emph{$m$-reducible} to $B$ (denoted by $A\leq_m B$) if there is a recursive function $f$ such that $n\in A\Leftrightarrow f(n)\in  B$.
  \item We say $X\subseteq\mathbb{N}$ is $\Sigma^0_n$-complete  if $X$ is a $\Sigma^0_n$ set and for any $\Sigma^0_n$ set $Y$, $Y\leq_m X$.  Similarly for $\Pi^0_n$-complete.
\end{enumerate}
\end{definition}

Now, we discuss some applications of Theorem \ref{Shoenfield thm1} and Theorem \ref{Shoenfield thm2}. We first show that the set of indexes of recursively inseparable theories is $\Pi^0_3$-complete. We use the following result from \cite{Rogers}.

\begin{fact}[Theorem XVI, p. 327, \cite{Rogers}]~\label{R index}
The set $\{e: W_e$ is recursive\} is $\Sigma^0_3$-complete.
\end{fact}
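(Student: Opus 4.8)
The plan is to show both that $\mathrm{Rec}:=\{e: W_e \text{ is recursive}\}$ lies in $\Sigma^0_3$ and that every $\Sigma^0_3$ set is $m$-reducible to $\mathrm{Rec}$; together these give $\Sigma^0_3$-completeness in the sense of the definition of $\Sigma^0_n$-completeness above.

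First I would establish the upper bound. A set is recursive iff it and its complement are both RE, so for an RE set $W_e$ we have $W_e$ recursive iff $\exists i\,(W_i=\mathbb{N}\setminus W_e)$, i.e.
\[
W_e\in\mathrm{Rec}\iff \exists i\,\big[\,\forall x\,\neg(x\in W_i\wedge x\in W_e)\ \wedge\ \forall x\,(x\in W_i\vee x\in W_e)\,\big].
\]
The first conjunct (disjointness) is $\Pi^0_1$ and the second (covering) is $\Pi^0_2$, so the bracketed matrix is $\Pi^0_2$; prefixing the single existential quantifier $\exists i$ yields a $\Sigma^0_3$ definition of $\mathrm{Rec}$.

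For hardness, given an arbitrary $\Sigma^0_3$ set $A$, write $n\in A\iff \exists u\,\forall v\,\exists w\,R(n,u,v,w)$ with $R$ recursive. I would first replace each inner $\Pi^0_2$ block by an infiniteness condition: set $E_{n,u}=\{v:\forall v'\le v\,\exists w\,R(n,u,v',w)\}$. Each $E_{n,u}$ is an initial segment of $\mathbb{N}$, is RE uniformly in $(n,u)$ (bounded quantifiers preserve RE-ness), and is infinite iff $\forall v\,\exists w\,R(n,u,v,w)$; hence $n\in A\iff \exists u\,[E_{n,u}\text{ infinite}]$. By the s-m-n theorem there is a recursive $p$ with $W_{p(n,u)}=E_{n,u}$, so it remains to build, uniformly in $n$, an index $g(n)$ with $W_{g(n)}$ recursive iff some column $E_{n,u}$ is infinite; a final application of s-m-n turns the stagewise construction into the required recursive reduction $g$, giving $n\in A\iff g(n)\in\mathrm{Rec}$.

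The hard part will be this last construction. The set $W_{g(n)}$ must become recursive \emph{precisely} when a single column is genuinely infinite, which rules out naive ``flood $\mathbb{N}$ whenever some column looks large'' strategies: columns that are finite but unboundedly large across $u$ would wrongly force recursiveness. I would therefore run a priority construction that guesses the least $u$ for which $E_{n,u}$ is infinite, flooding $\mathbb{N}$ into $W_{g(n)}$ on behalf of the current guess (so that a truly infinite column eventually makes $W_{g(n)}$ cofinite, hence recursive) while coding a fixed non-recursive RE set, such as the halting set, into the region left unflooded (so that when every column is finite the coding survives and $W_{g(n)}$ is non-recursive). Since the requirements ``$E_{n,u}$ infinite'' are $\Pi^0_2$, the coordination is an infinite-injury argument, and verifying that each finite column injures the construction only finitely often, while the least infinite column eventually wins and floods everything, is the crux of the proof.
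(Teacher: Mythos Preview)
The paper does not give its own proof of this fact: it is quoted verbatim from Rogers' textbook and used as a black box in the proof of Theorem~\ref{RI COM}. So there is no ``paper's proof'' to compare against, and your write-up is being measured against the classical argument in Rogers.

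Your upper-bound computation is correct and is exactly the standard one. For hardness, your outline is in the right spirit --- make $W_{g(n)}$ cofinite when some column is infinite, and code a non-recursive RE set into the unflooded region otherwise --- but the route you propose is more heavy-handed than what is actually needed. Rogers' proof is modular: one first shows that $\mathrm{Cof}=\{e:W_e\text{ is cofinite}\}$ is $\Sigma^0_3$-complete (essentially your reduction of the $\Pi^0_2$ matrix to an infiniteness condition, combined into a single RE set), and then proves $\mathrm{Cof}\le_m\mathrm{Rec}$ by a single movable-markers construction: markers sit on the current complement of $W_e$, drift upward as elements enter $W_e$, and their limit positions code $K$. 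If $W_e$ is cofinite every marker eventually disappears and $W_{f(e)}$ is cofinite; if not, all markers settle and $K$ is $m$-reduced to $W_{f(e)}$. This is not an infinite-injury argument at all, and no guessing of ``the least $u$ with $E_{n,u}$ infinite'' is required.

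Your direct construction can be made to work, but calling it ``infinite injury'' overstates the difficulty and obscures what is really going on: each finite column injures only finitely often, and the movable-markers bookkeeping absorbs the unbounded-but-finite growth of the columns without any tree-of-strategies machinery. If you rewrite the hardness half as ``$\Sigma^0_3\le_m\mathrm{Cof}\le_m\mathrm{Rec}$'' with the markers argument for the second step, you will have the classical proof, and it will be both shorter and easier to verify than the priority construction you sketch.
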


\begin{theorem}\label{RI COM}
The set $\{e: W_e$ is a ${\sf RI}$ theory\} is $\Pi^0_3$-complete.
\end{theorem}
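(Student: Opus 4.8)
The plan is to establish $\Pi^0_3$-completeness in the two standard steps: first that the index set lies in $\Pi^0_3$, and then that every $\Pi^0_3$ set $m$-reduces to it. For the upper bound I would note that the conditions making $W_e$ \emph{a consistent RE theory} are arithmetically cheap (consistency is $\Pi^0_1$ and deductive closure is $\Pi^0_2$ in $e$), so the governing condition is recursive inseparability of the nuclei $(T_P,T_R)=(W_e,T_R)$, where an index for the RE set $T_R=\{\ulcorner\phi\urcorner:\ulcorner\neg\phi\urcorner\in W_e\}$ is computed from $e$. I would express ``$(T_P,T_R)$ is recursively separable'' as
\[\exists i\,\bigl(\phi_i\text{ is total}\wedge\forall x\,(x\in T_P\to\phi_i(x)=1)\wedge\forall x\,(x\in T_R\to\phi_i(x)=0)\bigr),\]
since the recursive separating sets are exactly the sets $\{x:\phi_i(x)=1\}$ for total $\phi_i$. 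Totality is $\Pi^0_2$ and each conjunct under the quantifier is $\Pi^0_2$, so the matrix is $\Pi^0_2$ and the whole formula is $\Sigma^0_3$; hence being ${\sf RI}$ is $\Pi^0_3$, and the full index set, being a conjunction of this with lower-complexity conditions, is $\Pi^0_3$.

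For hardness I would reduce $\{e:W_e\text{ is not recursive}\}$, which is $\Pi^0_3$-complete because its complement is $\Sigma^0_3$-complete by Fact \ref{R index}. Given $e$, put $A=W_e$ and let $(B,C)$ be the disjoint pair of RE sets produced from $A$ by Theorem \ref{Shoenfield thm1}; its indices are recursive in $e$ since the construction is explicit. Then let $T=T_{(B,C)}$ be the theory of Theorem \ref{Shoenfield thm2} and the following remark, whose set $T_P$ of provable sentences is RE with an index $g(e)$ computed recursively from $e$. Since the construction always yields a consistent RE theory, $W_{g(e)}$ is in all cases a genuine consistent theory, so it suffices to show that $W_{g(e)}$ is an ${\sf RI}$ theory if and only if $W_e$ is not recursive.

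The key step, and the main obstacle, is to pin down exactly when $T$ is ${\sf RI}$ in terms of $(B,C)$. The map $j:n\mapsto\ulcorner\Phi_n\urcorner$ is recursive and, by the construction in Theorem \ref{Shoenfield thm2}, satisfies $n\in B\Leftrightarrow j(n)\in T_P$ and $n\in C\Leftrightarrow j(n)\in T_R$, so $(B,C)$ is reducible to $(T_P,T_R)$ via $j$. If $X$ is a recursive set separating $(T_P,T_R)$, then $j^{-1}[X]$ is a recursive set separating $(B,C)$; hence $(B,C)$ recursively inseparable implies $(T_P,T_R)$ recursively inseparable. Therefore, when $W_e$ is non-recursive, Theorem \ref{Shoenfield thm1} gives that $(B,C)$ is ${\sf RI}$, so $T$ is ${\sf RI}$. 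Conversely, when $W_e$ is recursive we have $B,C\leq_T W_e$ recursive, so $T_P\equiv_T B\oplus C$ is recursive, whence $T_P$ itself is a recursive set separating $(T_P,T_R)$ and $T$ is not ${\sf RI}$. Combining the two cases shows that $g$ reduces $\{e:W_e\text{ not recursive}\}$ to the index set of ${\sf RI}$ theories, which together with the upper bound yields $\Pi^0_3$-completeness. The delicate points to check are the uniformity of $g$ and the biconditionals $n\in B\Leftrightarrow j(n)\in T_P$ and $n\in C\Leftrightarrow j(n)\in T_R$, which rest on Janiczak's Lemma \ref{Janiczak's Lemma} and the specific axiomatization of $T_{(B,C)}$.
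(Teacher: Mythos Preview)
Your proof is correct and follows essentially the same approach as the paper: both reduce $\{e:W_e\text{ non-recursive}\}$ via the Shoenfield pair $(B,C)$ and the theory $T_{(B,C)}$, using the map $n\mapsto\ulcorner\Phi_n\urcorner$ to pull back a hypothetical recursive separator of $(T_P,T_R)$ to one of $(B,C)$, and using $T\equiv_T A$ for the converse direction. You are more explicit about the $\Pi^0_3$ upper bound, which the paper simply asserts is ``easy to check''.
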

\begin{proof}\label{}
Define $U_0=\{e: W_e$ is undecidable\} and $U_1=\{e: W_e$ is a ${\sf RI}$ theory\}.
We show that there is a recursive function $s$ such that $U_0\leq_m U_1$ via the function $s$.

Note that the construction of the pair $(B,C)$ from a RE set $A$  in Theorem \ref{Shoenfield thm1} is effective: there are recursive functions $f_0$ and $f_1$ such that if $A=W_e$, then $B=W_{f_0(e)}$ and $C=W_{f_1(e)}$ such that $(B,C)$ has the properties as in Theorem \ref{Shoenfield thm1}.
Given a disjoint pair $(B,C)$ of RE sets, note that the construction of $T_{(B,C)}$ in Theorem \ref{Shoenfield thm2} is also effective:  there is a recursive function $h(x,y)$ such that if $B=W_n$ and $C=W_m$ and $B\cap C=\emptyset$, then $T_{(B,C)}=W_{h(n,m)}$ such that $T_{(B,C)}$ has the same Turing degree as $A$.

Define $s(n)=h(f_0(n), f_1(n))$. Note that $s$ is recursive. Suppose $A=W_e$ and $B=W_{f_0(e)}$ and $C=W_{f_1(e)}$ are constructed from $A$ as in Theorem \ref{Shoenfield thm1}. Then $s(e)$ is the index of the theory $T_{(B,C)}$ constructed from $(B,C)$ as in Theorem \ref{Shoenfield thm2}.
We show that $e\in U_0\Leftrightarrow s(e)\in U_1$.

Case one: $e\in U_0$. Note that $(B,C)$ is recursively inseparable since $A=W_e$ is not recursive. We  denote the theory $T_{(B,C)}$ by $T$.
Define $g: n\mapsto \ulcorner \Phi_n\urcorner$. Note that $g$ is recursive, if $n\in B$, then $g(n)\in T_P$, and if $n\in C$, then $g(n)\in T_R$.
We show that $T$ is ${\sf RI}$. Suppose $T$ is not ${\sf RI}$. I.e., there is a recursive set $X$ such that $T_P\subseteq X$ and $X\cap T_R=\emptyset$. Note that $B\subseteq g^{-1}[T_P]\subseteq g^{-1}[X]$ and $C\subseteq g^{-1}[T_R]\subseteq g^{-1}[\overline{X}]=\overline{g^{-1}[X]}$. Since $g$ and $X$ are recursive, $g^{-1}[X]$ is recursive. Thus, $g^{-1}[X]$ is a recursive set separating $B$ and $C$, which  contradicts that $(B,C)$ is ${\sf RI}$.
Thus, $s(e)\in U_1$.

Case two: $e\notin U_0$. Since $T_{(B,C)}$ has the same Turing degree as $A=W_e$, $T_{(B,C)}$ is recursive. Thus, $s(e)\notin U_1$. Hence, $U_0\leq_m U_1$ via the recursive function $s$. By Fact \ref{R index}, $U_0$ is $\Pi^0_3$-complete.
It is easy to check that $U_1$ is $\Pi^0_3$.  Hence, $U_1$ is $\Pi^0_3$-complete.
\end{proof}

As a corollary of Theorem \ref{RI COM}, the set $\{e: (W_{e_0}, W_{e_1})$ is ${\sf RI}$\} is $\Pi^0_3$-complete.

A natural question is whether Theorem \ref{Shoenfield thm1} and Theorem \ref{Shoenfield thm2} can be generalized in the following sense: given a non-recursive RE set $A$, is there an ${\sf EI}$ pair $(B,C)$ such that $A, B$ and $C$ have the same Turing degree? or is there an ${\sf EI}$ theory $T$ such that $T$ has the same Turing degree as $A$?
Let $A$ be a non-recursive RE set. If $(B,C)$ is ${\sf EI}$, then both $B$ and $C$ have the Turing degree $0^{\prime}$. Thus, if $A$ has the Turing degree less  than $0^{\prime}$, then there is no such an ${\sf EI}$ pair (and there is no such an ${\sf EI}$ theory) with the same Turing degree as $A$. If $A$ has the Turing degree $0^{\prime}$, then any ${\sf EI}$ pair  (and any ${\sf EI}$ theory) has the same Turing degree as $A$.

Now, we show that the set of indexes of effectively inseparable theories is $\Sigma^0_3$ using ${\sf EI}\Leftrightarrow {\sf DG}$.

\begin{theorem}
The set $\{e: W_e$ is an ${\sf EI}$ theory\} is $\Sigma^0_3$.
\end{theorem}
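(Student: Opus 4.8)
The plan is to bound the complexity by first trading effective inseparability for doubly generativeness. By the equivalence ${\sf EI}\Leftrightarrow{\sf DG}$ established in Theorem \ref{simple proof} (see also Theorem \ref{big thm}), for a consistent theory $T=W_e$ the pair $(T_P,T_R)$ is ${\sf EI}$ if and only if it is ${\sf DG}$. This substitution is the whole point: the defining clause of ${\sf DG}$ quantifies over $i,j$ subject only to the hypothesis $W_i\cap W_j=\emptyset$, which is $\Pi^0_1$, whereas the definition of ${\sf EI}$ carries the hypotheses $A\subseteq W_i$ and $B\subseteq W_j$, each of which is $\Pi^0_2$ and would push the final count up to $\Sigma^0_4$. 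Throughout I write $A=T_P=\{\ulcorner\phi\urcorner:W_e\vdash\phi\}$ and $B=T_R=\{\ulcorner\phi\urcorner:W_e\vdash\neg\phi\}$; since provability from the RE axiom set $W_e$ is $\Sigma^0_1$ uniformly in $e$, the predicates ``$x\in A$'' and ``$x\in B$'' are $\Sigma^0_1$ uniformly in $e$.

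Next I would decompose ``$W_e$ is an ${\sf EI}$ theory'' into the conjunction of the conditions defining ``consistent theory'' together with ``$(A,B)$ is ${\sf DG}$''. Deductive closure, $\forall\phi\,(W_e\vdash\phi\to\ulcorner\phi\urcorner\in W_e)$, is a universal quantifier over a disjunction of a $\Pi^0_1$ and a $\Sigma^0_1$ predicate, hence $\Pi^0_2$; consistency, $\forall\phi\,\neg(W_e\vdash\phi\wedge W_e\vdash\neg\phi)$, is $\Pi^0_1$. Both lie in $\Sigma^0_3$, so the entire cost is carried by the ${\sf DG}$ clause.

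The crux is to show that ``$(A,B)$ is ${\sf DG}$'' is $\Sigma^0_3$. Unwinding the definition, it reads
\[\exists d\,\Big[\phi_d\text{ total}\ \wedge\ \forall i\,\forall j\big(W_i\cap W_j=\emptyset\to \Theta_d(i,j)\big)\Big],\]
where $\Theta_d(i,j)$ asserts the two biconditionals $\phi_d(i,j)\in A\Leftrightarrow\phi_d(i,j)\in W_i$ and $\phi_d(i,j)\in B\Leftrightarrow\phi_d(i,j)\in W_j$. I would fold the totality requirement into the matrix, so that for fixed $d,i,j$ the relevant statement is the conjunction of ``$\phi_d(i,j)\downarrow$'' (a $\Sigma^0_1$ statement) with a $\forall s$-form of $\Theta_d(i,j)$ reading ``for every $s$ with $T_1(d,\langle i,j\rangle,s)$, the output of $s$ satisfies the two biconditionals''. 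A biconditional between two $\Sigma^0_1$ predicates is of the form $\Sigma^0_1\vee\Pi^0_1$, hence $\Delta^0_2$ and in particular $\Pi^0_2$, so the bracket of biconditionals is $\Pi^0_2$ and the $\forall s$-form remains $\Pi^0_2$; the hypothesis $W_i\cap W_j=\emptyset$ is $\Pi^0_1$, so its negation is $\Sigma^0_1\subseteq\Pi^0_2$, and the implication $W_i\cap W_j=\emptyset\to\Theta_d(i,j)$ is $\Pi^0_2$. Conjoining with the $\Sigma^0_1$ halting clause keeps the matrix $\Pi^0_2$. Since $\Pi^0_2$ is closed under the universal quantifiers $\forall i\,\forall j$, the bracketed formula is $\Pi^0_2$, and prefixing the single existential $\exists d$ yields $\Sigma^0_3$.

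Finally I would assemble the pieces: the target set is the conjunction of conditions of complexity $\Pi^0_2$, $\Pi^0_1$ and $\Sigma^0_3$, and since $\Sigma^0_3$ is closed under finite conjunction and contains $\Pi^0_2$ and $\Pi^0_1$, the set $\{e:W_e\text{ is an }{\sf EI}\text{ theory}\}$ is $\Sigma^0_3$. No hardness argument is needed, since only the upper bound is claimed. The only real obstacle is the complexity bookkeeping in the previous paragraph: one must express totality via the ``$\forall s\,(T_1(d,\langle i,j\rangle,s)\to\ldots)$'' form rather than as ``$\exists s$ followed by a $\Pi^0_2$ statement about the output'', since the naive reading would spuriously raise the matrix and, after the $\forall i\,\forall j$ prefix, destroy the $\Sigma^0_3$ bound. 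Using ${\sf DG}$ in place of ${\sf EI}$ is precisely what makes this bookkeeping land at $\Sigma^0_3$ rather than $\Sigma^0_4$.
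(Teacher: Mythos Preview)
Your proposal is correct and follows the paper's approach: both arguments hinge on replacing ${\sf EI}$ by ${\sf DG}$ so that the hypothesis in the matrix drops from $\Pi^0_2$ to $\Pi^0_1$, yielding $\Sigma^0_3$ rather than $\Sigma^0_4$. The paper's proof is terser---it simply writes out the ${\sf DG}$ clause in the form $\exists\,\forall\forall\,[\Pi^0_1\to((\Sigma^0_1\leftrightarrow\Sigma^0_1)\wedge(\Sigma^0_1\leftrightarrow\Sigma^0_1))]$ and reads off $\Sigma^0_3$---whereas you additionally account for consistency, deductive closure, and (most importantly) totality of the witnessing function $\phi_d$, folding the $\Sigma^0_1$ halting clause into the $\Pi^0_2$ matrix; these are genuine details the paper elides, and your treatment of them is sound.
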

\begin{proof}\label{}
Define $U=\{e: W_e$ is an ${\sf EI}$ theory\}.
A direct computation from the definition of ${\sf EI}$ shows that $U$ is $\Sigma^0_4$. Using that ${\sf EI}\Leftrightarrow {\sf DG}$, we could show that $U$ in fact is $\Sigma^0_3$.

By the s-m-n theorem, there is a recursive function $h$ such that if $T=W_e$, then $W_{h(e)}=T_R$.
Note that $``e\in U"$ is equivalent to the following formula:
\[\exists n \forall i \forall j [W_i\cap W_j=\emptyset\rightarrow ((\phi_n(i,j)\in W_e\leftrightarrow \phi_n(i,j)\in W_i)\wedge (\phi_n(i,j)\in W_{h(e)}\leftrightarrow \phi_n(i,j)\in W_j))].\]
Since $e\in U$ can be written in the form $\exists \forall \forall [\Pi^0_1\rightarrow ((\Sigma^0_1\rightarrow \Sigma^0_1)\wedge (\Sigma^0_1\rightarrow \Sigma^0_1))]$, $U$ is $\Sigma^0_3$.
\end{proof}

\begin{remark}
We know that $\{e: W_e$ is creative\} is $\Sigma^0_3$-complete (see \cite{Rogers}). We conjecture that $\{e: W_e$ is an ${\sf EI}$ theory\} is $\Sigma^0_3$-complete.
\end{remark}

\subsection{There are many ${\sf EI}$ theories weaker than the theory $\mathbf{R}$}

In this section, we show that
there are many ${\sf EI}$ theories weaker than the theory $\mathbf{R}$.

\begin{definition}\label{}
For RE theories $S$ and $T$, we use $S\unlhd T$  to denote that $S$ is interpretable in $T$, and $S\lhd T$ to denote that  $S$ is interpretable in $T$ but $T$ is not interpretable in $S$.
\end{definition}

\begin{theorem}[Smullyan, Theorem 4, \cite{Smullyan}]\label{}
For consistent theories $T_1$ and  $T_2$, if $T_1\unlhd T_2$ and $T_1$ is ${\sf RI} ({\sf EI})$, then $T_2$ is ${\sf RI} ({\sf EI})$.
\end{theorem}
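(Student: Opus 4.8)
The plan is to extract from the interpretation a recursive semi-reduction of the nuclei of $T_1$ to the nuclei of $T_2$, and then to invoke Proposition \ref{reduction prop}(2) for the ${\sf EI}$ case and a direct preimage argument for the ${\sf RI}$ case. Write $((T_1)_P,(T_1)_R)$ and $((T_2)_P,(T_2)_R)$ for the two pairs of nuclei.

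First I would fix the translation $\tau$ witnessing $T_1\unlhd T_2$, i.e.\ the syntactic map sending each $L(T_1)$-sentence $\phi$ to the $L(T_2)$-sentence $\tau(\phi)$. Since the interpretation is presented by a finite piece of data and $\tau$ is defined by a uniform recursion on syntax, the induced map on G\"odel numbers $f\colon \ulcorner\phi\urcorner\mapsto\ulcorner\tau(\phi)\urcorner$ is recursive, and since the numbering is bijective, $f$ is total. The two features of interpretability I will use are: (a) if $T_1\vdash\phi$ then $T_2\vdash\tau(\phi)$; and (b) $\tau$ commutes with negation, so $\tau(\neg\phi)=\neg\tau(\phi)$ (this is part of the standing assumption that the mapping commutes with the propositional connectives).

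Next I would check that $f$ semi-reduces $((T_1)_P,(T_1)_R)$ to $((T_2)_P,(T_2)_R)$. If $\ulcorner\phi\urcorner\in(T_1)_P$, i.e.\ $T_1\vdash\phi$, then by (a) $T_2\vdash\tau(\phi)$, so $f(\ulcorner\phi\urcorner)\in(T_2)_P$. If $\ulcorner\phi\urcorner\in(T_1)_R$, i.e.\ $T_1\vdash\neg\phi$, then by (a) $T_2\vdash\tau(\neg\phi)$, which by (b) is $T_2\vdash\neg\tau(\phi)$, so $f(\ulcorner\phi\urcorner)\in(T_2)_R$. This establishes the semi-reduction. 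For the ${\sf EI}$ case I am then done: since $T_1$ is ${\sf EI}$, the pair $((T_1)_P,(T_1)_R)$ is ${\sf EI}$ and semi-reducible to $((T_2)_P,(T_2)_R)$, so by Proposition \ref{reduction prop}(2) the latter is ${\sf EI}$, i.e.\ $T_2$ is ${\sf EI}$. For the ${\sf RI}$ case I would argue contrapositively: if $T_2$ were not ${\sf RI}$, there would be a recursive $X$ with $(T_2)_P\subseteq X$ and $X\cap(T_2)_R=\emptyset$; then $f^{-1}[X]$ is recursive, and the semi-reduction gives $(T_1)_P\subseteq f^{-1}[X]$ and $(T_1)_R\cap f^{-1}[X]=\emptyset$, so $f^{-1}[X]$ is a recursive set separating the nuclei of $T_1$, contradicting that $T_1$ is ${\sf RI}$.

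The main obstacle is conceptual rather than computational: interpretability yields only the one-directional implication in (a), so one obtains a \emph{semi}-reduction rather than a full reduction, and the correct tool is therefore the weaker Proposition \ref{reduction prop}(2) (not a reduction theorem). The one genuinely delicate point is the preservation of refutability, which rests entirely on property (b); I would make sure to state explicitly that $\tau(\neg\phi)=\neg\tau(\phi)$ and that this is guaranteed by the requirement in the definition of interpretability that $\tau$ commute with the propositional connectives, since without it the map would preserve provability but not refutability and the semi-reduction would break.
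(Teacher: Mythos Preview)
Your proposal is correct. The paper does not give its own proof of this theorem; it simply cites Smullyan's result. Your argument---extracting from the interpretation a recursive semi-reduction of nuclei (using that $\tau$ commutes with negation), then invoking Proposition~\ref{reduction prop}(2) for ${\sf EI}$ and the preimage-of-a-separator argument for ${\sf RI}$---is the standard one, and it matches exactly the techniques the paper itself deploys elsewhere (e.g., the preimage argument in Theorems~\ref{RI COM} and~\ref{RI not R}, and the explicit use of Proposition~\ref{reduction prop}(2) throughout).
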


\begin{corollary}
If the theory $\mathbf{R}$ is interpretable in $T$, then $T$ is ${\sf EI}$.
\end{corollary}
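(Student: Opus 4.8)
The plan is to derive this corollary immediately from two results already established in the excerpt, so the argument is a short chain of implications rather than anything requiring new machinery. First I would establish that $\mathbf{R}$ itself is ${\sf EI}$: by the Fact on the properties of $\mathbf{R}$ stated earlier, any disjoint pair of RE sets is separable in $\mathbf{R}$, which is precisely the definition of $\mathbf{R}$ being a Rosser theory. Then, invoking the theorem that every consistent RE Rosser theory is ${\sf EI}$ (Smullyan, Theorem 2, p.221 of \cite{Smullyan}), we conclude that $\mathbf{R}$ is ${\sf EI}$.

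Next I would unpack the hypothesis. By the notation just introduced, ``$\mathbf{R}$ is interpretable in $T$'' means exactly $\mathbf{R}\unlhd T$. Since $\mathbf{R}$ is consistent and ${\sf EI}$, and $T$ is a consistent RE theory under our running conventions, I would apply Smullyan's Theorem 4 (stated immediately above this corollary): for consistent theories $T_1$ and $T_2$, if $T_1\unlhd T_2$ and $T_1$ is ${\sf EI}$, then $T_2$ is ${\sf EI}$. Taking $T_1=\mathbf{R}$ and $T_2=T$ yields that $T$ is ${\sf EI}$, as desired.

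I do not expect any genuine obstacle here; the corollary is a direct composition of the preceding theorem with the observation that $\mathbf{R}$ is ${\sf EI}$. The only point worth flagging is that the applicability of Smullyan's Theorem 4 presupposes consistency of both theories, so I would note explicitly that $\mathbf{R}$ is consistent and that $T$ is assumed consistent (as in Definition \ref{The nuclei of a theory} and throughout the paper). No version of the recursion theorem or the reduction theorem is needed for this particular statement.
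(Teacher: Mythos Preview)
Your proposal is correct and matches the paper's intended argument: the corollary is stated in the paper without proof, immediately after Smullyan's Theorem~4, so the implicit reasoning is exactly what you outline---$\mathbf{R}$ is Rosser (hence ${\sf EI}$ by the earlier theorem), and ${\sf EI}$ transfers along interpretability by Theorem~4. Your remark about the consistency hypotheses is a reasonable piece of hygiene but not something the paper makes explicit.
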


A natural question is: if $T$ is ${\sf RI} ({\sf EI})$, is the theory $\mathbf{R}$ interpretable in $T$?
We answer these questions negatively.

\begin{definition}\label{pair theory}
Given any disjoint pair $(A,B)$ of RE sets, we construct the theory $T_{(A,B)}$ as follows. Let $L(T_{(A,B)})=\{\mathbf{0}, \mathbf{S}, P\}$. The axioms of $T_{(A,B)}$ consist of:
\begin{enumerate}[(1)]
  \item $\overline{m}\neq \overline{n}$ if $m\neq n$;
  \item $P(\overline{n})$ if $n\in A$;
  \item $\neg P(\overline{n})$ if $n\in B$.
\end{enumerate}
\end{definition}

\begin{theorem}[Cheng, \cite{Cheng}]\label{my thm}
If $(A,B)$ is ${\sf RI}$, then $T_{(A,B)}$ is essentially undecidable and $T_{(A,B)}\lhd \mathbf{R}$.
\end{theorem}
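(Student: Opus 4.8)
The plan is to prove the two assertions separately and to split the claim $T_{(A,B)}\lhd\mathbf{R}$ into the two halves $T_{(A,B)}\unlhd\mathbf{R}$ and $\mathbf{R}\not\unlhd T_{(A,B)}$ (that is, $\mathbf{R}$ is not interpretable in $T_{(A,B)}$). Write $T=T_{(A,B)}$. First note that $T$ is a consistent RE theory, with the standard structure $M_0=(\mathbb{N},\mathbf 0,\mathbf S,A)$ (interpreting $P$ as $A$) as a witnessing model: the disjointness of $(A,B)$ makes every axiom true in $M_0$.

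For essential undecidability, I would first observe that $(A,B)$ is separable in $T$ by the formula $P(x)$, since the axioms give $T\vdash P(\overline n)$ for $n\in A$ and $T\vdash\neg P(\overline n)$ for $n\in B$. Hence $(A,B)$ is semi-reducible to $(T_P,T_R)$ via the recursive map $f\colon n\mapsto\ulcorner P(\overline n)\urcorner$. If a recursive set $X$ separated $(T_P,T_R)$, then $f^{-1}[X]$ would be a recursive set separating $(A,B)$ (exactly the argument used in the proof of Theorem \ref{RI COM}), contradicting that $(A,B)$ is ${\sf RI}$. Thus $T$ is ${\sf RI}$, and by the remark following Fact \ref{important fact} that ${\sf RI}$ implies ${\sf EU}$, $T$ is essentially undecidable.

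For $T\unlhd\mathbf{R}$, I would use that every disjoint pair of RE sets is separable in $\mathbf{R}$: pick an $L(\mathbf{R})$-formula $\psi(x)$ with $\mathbf{R}\vdash\psi(\overline n)$ for $n\in A$ and $\mathbf{R}\vdash\neg\psi(\overline n)$ for $n\in B$. The relative interpretation sending $\mathbf 0,\mathbf S$ to themselves and $P(x)$ to $\psi(x)$ translates each axiom of $T$ to a theorem of $\mathbf{R}$: the axioms $\overline m\neq\overline n$ become instances of ${\sf Ax3}$, while $P(\overline n)$ and $\neg P(\overline n)$ become $\psi(\overline n)$ and $\neg\psi(\overline n)$. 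Hence $T\unlhd\mathbf{R}$.

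The main obstacle is the non-interpretability $\mathbf{R}\not\unlhd T$, and it cannot be extracted from effective inseparability: if $(A,B)$ is ${\sf EI}$ then $T$ is itself ${\sf EI}$ by Proposition \ref{reduction prop}(2), and the upward transfer of ${\sf EI}$ under interpretability gives nothing. Instead I would argue model-theoretically. It suffices to show that the single structure $M_0$ does not interpret $\mathbf{R}$, because any interpretation of $\mathbf{R}$ in the theory $T$ would in particular interpret $\mathbf{R}$ inside $M_0\models T$. My plan is to use stability. The Gaifman graph of $M_0$ is the successor line, so by Gaifman's locality theorem every formula is a Boolean combination of local formulas; since a bounded interval carries one of only finitely many $A$-patterns, there are finitely many local $r$-types, and a pigeonhole argument then shows that no formula can have the order property. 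Hence $M_0$ is stable, and crucially this holds for \emph{every} $A$, so the complexity of $A$ is irrelevant. On the other hand $\mathbf{R}$ is unstable: in any model the formula $x\le y$ orders the numerals, since ${\sf Ax1}$ gives $\mathbf{R}\vdash\overline i\le\overline n$ for $i\le n$ while ${\sf Ax3}$ and ${\sf Ax4}$ give $\mathbf{R}\vdash\neg(\overline i\le\overline n)$ for $i>n$, so $\overline i\le\overline n\Leftrightarrow i\le n$ is provable. As stability is preserved under interpretation (including multi-dimensional and quotient interpretations), a stable $M_0$ cannot interpret the unstable $\mathbf{R}$, giving $\mathbf{R}\not\unlhd T$ and completing $T\lhd\mathbf{R}$. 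The delicate points I expect to carry the real work are the uniform stability of $M_0$ in $A$ via the locality/pigeonhole step, and checking that the paper's notion of interpretation falls under stability transfer.
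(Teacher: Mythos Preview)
The paper does not supply its own proof of this theorem: it is quoted from \cite{Cheng} and then invoked as a black box in Theorems \ref{RI not R} and \ref{EI not R}. There is therefore no argument in the present paper to compare against.

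Your proposal is correct on its own terms. The essential-undecidability step and the interpretation $T_{(A,B)}\unlhd\mathbf{R}$ are routine, and your semi-reduction $n\mapsto\ulcorner P(\overline n)\urcorner$ is exactly the map the paper itself deploys in the two theorems that follow. For $\mathbf{R}\not\unlhd T_{(A,B)}$, your stability route works: in $M_0=(\mathbb{N},\mathbf 0,\mathbf S,A)$ every $r$-ball has at most $2r+1$ points, so there are only finitely many local $r$-types; by Gaifman locality the truth of any $\phi(x,y)$ on far-apart pairs depends only on these types, and pigeonhole then produces two far-apart elements of the same local type, forcing $\phi(a,b)\leftrightarrow\phi(b,a)$ and killing the order property. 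Since every model of $\mathbf{R}$ is unstable (the numerals witness the order property for $x\le y$, exactly as you check via $\sf Ax1$, $\sf Ax3$, $\sf Ax4$) and stability passes down along interpretations, $M_0$ cannot interpret any model of $\mathbf{R}$, so $\mathbf{R}\not\unlhd T$.

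One point you should make explicit in a full write-up: the bound you extract on the length of an ordered sequence for $\phi$ is uniform and first-order expressible (``there is no $n$-tuple ordered by $\phi$'' for a specific $n$ depending only on the Gaifman radius of $\phi$), so it transfers to every elementary extension. This is what upgrades ``$M_0$ contains no infinite $\phi$-ordered sequence'' to genuine stability of $\mathrm{Th}(M_0)$, which is what you need. Your closing caveat about the paper's notion of interpretation is well placed but harmless here: the reference to \cite{Metamathematics of First-Order Arithmetic} gives the standard relative-interpretation notion, and stability transfer holds for it.
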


Theorem \ref{RI not R} shows that from any ${\sf RI}$ pair, we can effectively construct a ${\sf RI}$ theory which is strictly weaker than the theory $\mathbf{R}$ w.r.t. interpretation.

\begin{theorem}\label{RI not R}
If $(A,B)$ is ${\sf RI}$, then there is a RE theory $T_{(A,B)}$ such that $T_{(A,B)}$ is ${\sf RI}$ and $T_{(A,B)}\lhd \mathbf{R}$.
\end{theorem}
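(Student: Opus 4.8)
The plan is to reuse the very theory $T_{(A,B)}$ from Definition \ref{pair theory}, so that essentially all the work has already been done elsewhere. Since $(A,B)$ is ${\sf RI}$, Theorem \ref{my thm} already gives $T_{(A,B)} \lhd \mathbf{R}$; this settles the interpretability half of the conclusion outright. The only gap between Theorem \ref{my thm} and the present statement is that Theorem \ref{my thm} asserts $T_{(A,B)}$ is essentially undecidable, whereas here we need the strictly stronger conclusion that $T_{(A,B)}$ is ${\sf RI}$ (recall that ${\sf RI}$ implies ${\sf EU}$, not conversely). So the entire content of the proof is to upgrade ``${\sf EU}$'' to ``${\sf RI}$''.

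To do this, I would exhibit a recursive semi-reduction of the pair $(A,B)$ into the nuclei of $T_{(A,B)}$, mirroring the argument already used in Case one of the proof of Theorem \ref{RI COM}. Define the recursive function $g\colon n \mapsto \ulcorner P(\overline{n})\urcorner$. Reading off the axioms of $T_{(A,B)}$ in Definition \ref{pair theory}: if $n \in A$ then $P(\overline{n})$ is an axiom, so $g(n) \in (T_{(A,B)})_P$; and if $n \in B$ then $\neg P(\overline{n})$ is an axiom, so $g(n) \in (T_{(A,B)})_R$. Consistency of $T_{(A,B)}$, which makes these nuclei a genuine disjoint pair, is guaranteed by $A \cap B = \emptyset$ together with the distinctness axioms, and is in any case already implicit in Theorem \ref{my thm}.

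Finally I would run the separation argument backwards. Suppose toward a contradiction that $T_{(A,B)}$ is not ${\sf RI}$, so there is a recursive set $X$ with $(T_{(A,B)})_P \subseteq X$ and $X \cap (T_{(A,B)})_R = \emptyset$. Then $g^{-1}[X]$ is recursive, and by the two implications above we get $A \subseteq g^{-1}[X]$ while $B \cap g^{-1}[X] = \emptyset$; that is, $g^{-1}[X]$ is a recursive set separating $A$ and $B$, contradicting that $(A,B)$ is ${\sf RI}$. Hence $T_{(A,B)}$ is ${\sf RI}$, and combined with $T_{(A,B)} \lhd \mathbf{R}$ from Theorem \ref{my thm} the theorem follows. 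I do not expect a genuine obstacle here; the one point to be careful about is the direction of the reduction — a semi-reduction pulls a recursive separator of the target back to a recursive separator of the source, so it transfers ${\sf RI}$ from $(A,B)$ to the nuclei, which is precisely the direction we want.
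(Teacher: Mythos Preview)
Your proposal is correct and follows essentially the same approach as the paper: use the theory $T_{(A,B)}$ of Definition \ref{pair theory}, invoke Theorem \ref{my thm} for $T_{(A,B)}\lhd\mathbf{R}$, and then pull back a hypothetical recursive separator of the nuclei along the map $n\mapsto\ulcorner P(\overline{n})\urcorner$ to contradict ${\sf RI}$ of $(A,B)$. The paper's proof is terser but otherwise identical, and it too explicitly points to the argument in Theorem \ref{RI COM}.
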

\begin{proof}\label{}
Let $T_{(A,B)}$ be the theory constructed as in Definition \ref{pair theory}, and we denote it by $T$.
Define $f: n\mapsto \ulcorner P(\overline{n})\urcorner$. Note that $f$ is recursive, if $n\in A$, then $f(n)\in T_P$, and if $n\in B$, then $f(n)\in T_R$.

By Theorem \ref{my thm}, $T\lhd \mathbf{R}$. By a similar argument as in Theorem \ref{RI COM}, we can show that $T$ is ${\sf RI}$: if $X$ is a recursive set separating $T_P$ and $T_R$, then $f^{-1}[X]$ is a recursive set separating $A$ and $B$.

\end{proof}

Theorem \ref{EI not R} shows that from any ${\sf EI}$ pair, we can effectively construct an ${\sf EI}$ theory which is strictly weaker than the theory $\mathbf{R}$ w.r.t. interpretation.

\begin{theorem}\label{EI not R}
If $(A,B)$ is ${\sf EI}$, then there is a RE theory $T_{(A,B)}$ such that $T_{(A,B)}$ is ${\sf EI}$ and $T_{(A,B)}\lhd \mathbf{R}$.
\end{theorem}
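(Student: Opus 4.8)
The plan is to take $T_{(A,B)}$ to be exactly the theory from Definition \ref{pair theory}, as in the proof of Theorem \ref{RI not R}, and to extract the two required conclusions---that $T_{(A,B)}$ is ${\sf EI}$ and that $T_{(A,B)}\lhd\mathbf{R}$---by combining the reduction machinery of Section 2.2 with the interpretability result Theorem \ref{my thm}. Write $T=T_{(A,B)}$. The first step is to record the recursive map $f\colon n\mapsto\ulcorner P(\overline{n})\urcorner$ and to read off from the axioms of $T$ that $n\in A$ forces $T\vdash P(\overline{n})$, i.e. $f(n)\in T_P$, while $n\in B$ forces $T\vdash\neg P(\overline{n})$, i.e. $f(n)\in T_R$. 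This is precisely the assertion that $(A,B)$ is semi-reducible to $(T_P,T_R)$ via $f$.

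For the interpretability half I would note that every ${\sf EI}$ pair is a fortiori ${\sf RI}$ (effective inseparability is the effective strengthening of recursive inseparability), so the hypothesis of Theorem \ref{my thm} is satisfied and that theorem delivers $T\lhd\mathbf{R}$ at once, with no further work.

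For the ${\sf EI}$ half, the decisive step is Proposition \ref{reduction prop}(2): since $(A,B)$ is ${\sf EI}$ and is semi-reducible to $(T_P,T_R)$ via $f$, the pair $(T_P,T_R)$ is itself ${\sf EI}$, which is exactly the statement that $T$ is an ${\sf EI}$ theory. This mirrors Theorem \ref{RI not R}, where the ${\sf RI}$-ness of $T$ was obtained by pulling a hypothetical recursive separator of $(T_P,T_R)$ back along $f$ to separate $(A,B)$; here the analogous effective transport is supplied wholesale by the semi-reducibility preservation of ${\sf EI}$.

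I do not expect any genuine obstacle. The only point needing attention is the orientation of the semi-reduction---one must check that $A$ lands in $T_P$ and $B$ in $T_R$, not the other way around---because semi-reducibility, and hence Proposition \ref{reduction prop}(2), is not symmetric between the two pairs. Once this orientation is fixed, both halves follow immediately from cited results without any computation.
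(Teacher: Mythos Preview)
Your proposal is correct and takes essentially the same approach as the paper: the paper uses the same theory $T_{(A,B)}$, the same recursive map $f:n\mapsto\ulcorner P(\overline{n})\urcorner$, invokes Theorem \ref{my thm} for $T\lhd\mathbf{R}$, and then proves ${\sf EI}$ of $(T_P,T_R)$ by explicitly constructing $s(i,j)=f(h(g(i),g(j)))$ with $W_{g(i)}=f^{-1}[W_i]$, which is precisely the content of Proposition \ref{reduction prop}(2) unwound in this instance. Your version is slightly cleaner in that you cite Proposition \ref{reduction prop}(2) directly rather than reprove it, and you make explicit the step ${\sf EI}\Rightarrow{\sf RI}$ needed to invoke Theorem \ref{my thm}.
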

\begin{proof}\label{}
Let $T_{(A,B)}$ be the theory constructed as in Definition \ref{pair theory}, and we denote it by $T$. Define the recursive function $f: n\mapsto \ulcorner P(\overline{n})\urcorner$.

By Theorem \ref{my thm}, $T\lhd \mathbf{R}$. Now, we show that $T$ is ${\sf EI}$. Suppose $(A,B)$ is ${\sf EI}$ via the recursive function $h$. By the s-m-n theorem, there is a recursive function $g$ such that for any $i\in\omega$, $f^{-1}[W_i]=W_{g(i)}$.

Define $s(i,j)=f(h(g(i), g(j)))$. We show that $T$ is ${\sf EI}$ via the recursive function $s$.

Suppose $T_P\subseteq W_i$, $T_R\subseteq W_j$ and $W_i\cap W_j=\emptyset$.
Note that $A\subseteq f^{-1}[T_P]\subseteq f^{-1}[W_i]=W_{g(i)}, B\subseteq f^{-1}[T_R]\subseteq f^{-1}[W_j]=W_{g(j)}$, and $W_{g(i)}\cap W_{g(j)}=\emptyset$. Since $(A,B)$ is ${\sf EI}$ via the recursive function $h$, $h(g(i), g(j))\notin W_{g(i)}\cup W_{g(j)}=f^{-1}[W_i]\cup f^{-1}[W_j]$. Thus, $s(i,j)\notin W_i\cup W_j$. I.e. $T$ is ${\sf EI}$ via the recursive function $s$.
\end{proof}

We conclude the paper with one question for future research, which we did not explore. 
\begin{question}\label{}
Let $T$ be a consistent RE theory. If $T$ is a Rosser theory, is $T$ an exact Rosser theory?

\end{question}

{}

\end{document}